\newcommand{\version}{October 29, 2024}
\setlist{nosep}
\setlist[enumerate,1]{label=(\arabic*)}
\title{Every $3$-connected $\{\claw,\Gamma_3\}$-free graph is 
        Hamilton-connected$^{1}$}
\author{Adam Kabela \and Zden\v{e}k Ryj\'a\v{c}ek \and 
        M\'aria Skyvov\'a \and Petr Vr\'ana}
\date{\version}
\newcounter{mathitem}
\newenvironment{mathitem}
  {\begin{list}{{$(\roman{mathitem})$}}{
   \setcounter{mathitem}{0}
   \usecounter{mathitem}
   \setlength{\topsep}{0pt plus 2pt minus 0pt}
   \setlength{\parskip}{0pt plus 2pt minus 0pt}
   \setlength{\partopsep}{0pt plus 2pt minus 0pt}
   \setlength{\parsep}{0pt plus 2pt minus 0pt}
   \setlength{\leftmargin}{35pt}
   \setlength{\itemsep}{0pt plus 2pt minus 0pt}}}
  {\end{list}}
\newcounter{mylist}
\newenvironment{mylist}
  {\begin{list}{{$\roman{mylist}$}}{
   \setlength{\topsep}{7pt plus 2pt minus 0pt}
   \setlength{\parsep}{3pt plus 2pt minus 0pt}
   \setlength{\leftmargin}{12pt}
   \setlength{\labelwidth}{-6pt}
   }
   }
  {\end{list}}
\newcounter{prostredi}
\def\theprostredi{\arabic{prostredi}}
\def\vejde#1{\unskip
\nobreak\hfill\penalty50\hskip1em\hbox{}\nobreak\hfill
\hbox{#1}}
\newenvironment{theorem}{\par\bigskip\noindent%
\refstepcounter{prostredi}{\bf Theorem \theprostredi.}\quad\bgroup\sl }
{\egroup\par\bigskip\endtrivlist}%
\newenvironment{lemma}{\par\bigskip\noindent%
\refstepcounter{prostredi}{\bf Lemma \theprostredi.}\quad\bgroup\sl }
{\egroup\par\bigskip\endtrivlist}%
\newenvironment{proof}{\par
\noindent%
{\bf Proof.}\quad\bgroup}
{\egroup\vejde{\rule{2.5mm}{2.5mm}}\par\bigskip\endtrivlist}%
\newenvironment{proofbt}{\par
\noindent%
{\bf Proof}\bgroup}
{\egroup\vejde{\rule{2.5mm}{2.5mm}}\par\bigskip\endtrivlist}%
\newenvironment{corollary}{\par\bigskip\noindent%
\refstepcounter{prostredi}{\bf Corollary
\theprostredi.}\quad\bgroup\sl }
{\egroup\par\bigskip\endtrivlist}%
\newcounter{prostrclaim}
\def\theprostrclaim{\arabic{prostrclaim}}
\def\vejde#1{\unskip
\nobreak\hfill\penalty50\hskip1em\hbox{}\nobreak\hfill
\hbox{#1}}
\newenvironment{claim}{\par\bigskip\noindent%
\refstepcounter{prostrclaim}{\underline{Claim \theprostrclaim.}}\quad\bgroup\sl }
{\egroup\par\bigskip\endtrivlist}%
\newenvironment{proofcl}{\par
\noindent%
{\underline{Proof.}}\quad\bgroup}
{\egroup\vejde{$\Box$}\par\bigskip\endtrivlist}%
\newenvironment{proofclbt}{\par
\noindent%
{\underline{Proof~}}\bgroup}
{\egroup\vejde{
$\Box$}\par\bigskip\endtrivlist}%
\def\vejde#1{\unskip
\nobreak\hfill\penalty50\hskip1em\hbox{}\nobreak\hfill
\hbox{#1}}
\newenvironment{claimbc}{\par\bigskip\noindent%
{\underline{Claim.}}\quad\bgroup\sl }
{\egroup\par\bigskip\endtrivlist}%
\newcounter{prostralph}
\def\theprostralph{\Alph{prostralph}}
\def\vejde#1{\unskip
\nobreak\hfill\penalty50\hskip1em\hbox{}\nobreak\hfill
\hbox{#1}}
\newenvironment{theoremAcite}[1]{\par\bigskip\noindent%
\refstepcounter{prostralph}{\bf Theorem
\theprostralph{} {#1}.}\quad\bgroup\sl }
{\egroup\par\bigskip\endtrivlist}%
\newenvironment{lemmaAcite}[1]{\par\bigskip\noindent%
\refstepcounter{prostralph}{\bf Lemma
\theprostralph{} {#1}.}\quad\bgroup\sl }
{\egroup\par\bigskip\endtrivlist}%
\newcommand{\noi}{\noindent}
\newcommand{\ld}{\ldots}
\newcommand{\iso}{\simeq}
\newcommand{\niso}{\not \simeq}
\newcommand{\sm}{\setminus}
\newcommand{\cl}{{\rm cl}}
\newcommand{\claw}{K_{1,3}}
\newcommand{\Gt}{\Gamma_3}
\newcommand{\Gi}{\Gamma_i}
\newcommand{\co}{{\rm co}}
\newcommand{\la}{\langle}
\newcommand{\lab}{\langle \{}
\newcommand{\ra}{\rangle}
\newcommand{\rag}{\rangle _G}
\newcommand{\rabg}{\} \rangle _G}
\newcommand{\cF}{{\cal F}}
\newcommand{\cG}{{\cal G}}
\newcommand{\cW}{{\cal W}}
\newcommand{\dvW}{\mathbb{W}}
\newcommand{\dvK}{\mathbb{K}}
\newcommand{\bG}{\bar{G}}
\newcommand{\bq}{\bar{q}}
\newcommand{\tH}{\tilde{H}}
\newcommand{\tF}{\tilde{F}}
\newcommand{\Gst}{G^{^*}}
\newcommand{\Gstx}{G^{^*}_x}
\newcommand{\bp}{\beginpicture}
\newcommand{\ep}{\endpicture}
\newcommand{\dist}{\mbox{dist}}
\newcommand{\Lp}{L^{-1}}
\newcommand{\Int}{{\rm Int}}
\newcommand{\indsub}{\stackrel{\mbox{\tiny IND}}{\subset}}
\newcommand{\bs}{\bigskip}
\newcommand{\bsm}{\vspace{-4mm}}
\newcommand{\ms}{\medskip}
\newcommand{\ssk}{\smallskip}
\newcommand{\Aa}{\mathcal A}
\newcommand{\Bb}{\mathcal B}
\newcommand{\Mm}{\mathcal M}
\newcommand{\AND}{\textbf{and }}
\def\d{\mathop{\rm d}\nolimits}
\begin{document}
\maketitle

\footnotetext[1]{All authors are affiliated with the Department of Mathematics; 
European Centre of Excellence NTIS - New Technologies for the Information 
Society, University of West Bohemia,  Univerzitn\'{\i}~8, 301 00 Pilsen, 
Czech Republic.
E-mails: {\tt $\{$kabela,ryjacek,vranap$\}$@kma.zcu.cz, mskyvova@ntis.zcu.cz.} 
The research was supported by project GA20-09525S of the Czech Science 
Foundation.}

\begin{abstract}
\noi
We show that every $3$-connected $\{\claw,\Gamma_3\}$-free graph is 
Hamilton-connected, where $\Gamma_3$ is the graph obtained by joining 
two vertex-disjoint triangles with a path of length~$3$.
This resolves one of the two last open cases in the characterization of
pairs of connected forbidden subgraphs implying Hamilton-connectedness.
The proof is based on a new closure technique, developed in a previous paper,
and on a structural analysis of small subgraphs, cycles and paths in line 
graphs of multigraphs.
The most technical steps of the analysis are computer-assisted.

\ms

\noi
Keywords: Hamilton-connected; closure; forbidden subgraph; claw-free;
$\Gamma_3$-free
\end{abstract}

\section{Terminology and notation}
\label{sec-notation}

We generally follow the most common graph-theoretical notation and terminology, 
and for notations and concepts not defined here we refer the reader to \cite{BM08}.
Specifically, by a {\em graph} we always mean a simple finite undirected graph;
whenever we admit multiple edges, we always speak about a {\em multigraph}.
If $G$ is a multigraph and $x_1x_2\in E(G)$, we denote $\mu_G(x_1x_2)$ (or simply
$\mu(x_1x_2)$) the {\em multiplicity} of the edge $x_1x_2$ in $G$ 
(with $\mu(x_1x_2)=0$ if $x_1,x_2$ are nonadjacent), and we use 
$E_S(G)$ ($E_M(G)$) to denote the set of all simple nonpendant (multiple) 
edges of $G$, respectively.
If $x_1x_2\in E_M(G)$ and we need to distinguish the individual edges joining 
$x_1$ and $x_2$, we use the notation $(x_1x_2)^1$, $(x_1x_2)^2$ etc.
We say that vertices $x_1,x_2\in V(G)$ are {\em twins} in a multigraph $G$ if 
$\mu_G(x_1u)=\mu_G(x_2u)$ for each $u\in V(G)$.

We say that $S$ is an {\em induced subgraph} of a graph $G$
if $S$ can be obtained from $G$ by removing some vertices,
we denote this by $S \indsub G$.
We say that a (multi)graph $S$ is a {\em sub(multi)graph} of a multigraph $G$, 
denoted $S \subset G$, if $V(S)\subset V(G)$ and for every $x_1,x_2\in V(S)$ 
we have $\mu_S(x_1x_2)\leq\mu_G(x_1x_2)$, and we say that $S$ is a 
{\em flat subgraph} of $G$ if $S$ is a sub(multi)graph of $G$ where for every 
$x_1,x_2\in V(S)$, if $x_1x_2\in E(G)$, then $\mu_S(x_1x_2)\in\{1,\mu_G(x_1x_2)\}$.
We write $G_1\iso G_2$ if the (multi)graphs $G_1$, $G_2$ are isomorphic, 
and $\la M \rag$ to denote the {\em induced sub(multi)graph} on a set 
$M\subset V(G)$.

We use $d_G(x)$ to denote the {\em degree} of a vertex  $x$ in $G$ (note that 
if $G$ is a multigraph, then $d_G(x)$ equals the sum of multiplicities of the
edges containing $x$). 
For $x\in V(G)$, $N_G(x)$ denotes the {\em neighborhood} of $x$ in $G$, 
and for $M\subset V(G)$ we set $N_M(x)=N_G(x)\cap M$.

If $x\in V(G)$ is of degree 2 with $N_G(x)=\{y_1,y_2\}$, then the operation of 
replacing the path $y_1xy_2$ by the edge $y_1y_2$ is called {\em suppressing} the 
vertex $x$.
The inverse operation is called {\em subdividing} the edge $y_1y_2$ with the 
vertex $x$.
For $x,y\in V(G)$, $\dist_G(x,y)$ denotes the {\em distance} of $x$ and $y$ in 
$G$, and if $F\subset G$ is a connected subgraph and $x,y\in V(F)$, then 
$\dist_F(x,y)$ denotes the distance of $x,y$ in $F$, i.e., the length of a 
shortest $(x,y)$-path in $F$.
If $C$ is a cycle in $G$, then an edge $xy\in E(G)$ such that $x,y\in V(C)$ and 
$\dist_C(x,y)\geq 2$ is called a {\em chord} of $C$. If $C$ has no chords,
we say that $C$ is {\em chordless} (note that some edges of a chordless cycle 
can still be multiple in $H$). 

A triangle having a multiple edge is called a {\em multitriangle}
(see Fig.~\ref{fig-diamonds}$(a)$), and by a {\em diamond} we mean the graph
$K_4-e$ (see Fig.~\ref{fig-diamonds}$(b)$).
By a {\em clique} in $G$ we mean a complete subgraph of $G$, not necessarily 
maximal.

We say that a vertex $x\in V(G)$ is {\em simplicial} if $\la N_G(x)\rag$ is a 
clique, and we use $V_{SI}(G)$ to denote the set of all simplicial 
vertices of $G$, and $V_{NS}(G)=V(G)\setminus V_{SI}(G)$ the set of 
nonsimplicial vertices of $G$.
For $k\geq 1$, we say that a vertex $x\in V(G)$ is {\em locally $k$-connected 
in $G$} if $\la N_G(x)\rag$ is a $k$-connected graph.

A graph is {\em Hamilton-connected} if, for any $u,v\in V(G)$, $G$ has a 
hamiltonian $(u,v)$-path, i.e., an $(u,v)$-path $P$ with $V(P)=V(G)$.

Finally, if $\cF$ is a family of graphs, we say that $G$ is {\em $\cF$-free}
if $G$ does not contain an induced subgraph isomorphic to a member of $\cF$,
and the graphs in $\cF$ are referred to in this context as {\em forbidden
(induced) subgraphs}.
If $\cF=\{F\}$, we simply say that $G$ is {\em $F$-free}.
Here, the {\em claw} is the graph $\claw$, $P_i$ denotes the path on $i$ 
vertices, and $\Gamma_i$ denotes the graph obtained by joining two triangles 
with a path of length $i$ (see Fig.~\ref{fig-special_graphs}$(d)$).
Several further graphs that will occur as forbidden subgraphs are 
shown in Fig.~\ref{fig-special_graphs}$(a),(b),(c)$.
Whenever we will list vertices of an induced claw $\claw$, we will always list 
its center as the first vertex of the list, and when listing vertices of an 
induced subgraph $\Gamma_i$, we always list first the vertices of degree 2 
of one of the triangles, then the vertices of the path, and we finish with 
the vertices of degree 2 of the second triangle (i.e., in the labeling of
vertices as in Fig.~\ref{fig-special_graphs}$(d)$, we write 
$\lab t_1,t_2,p_1,\ldots,p_{i+1},t_3,t_4\rabg\iso\Gi$).

%
%
%
\begin{figure}[ht]
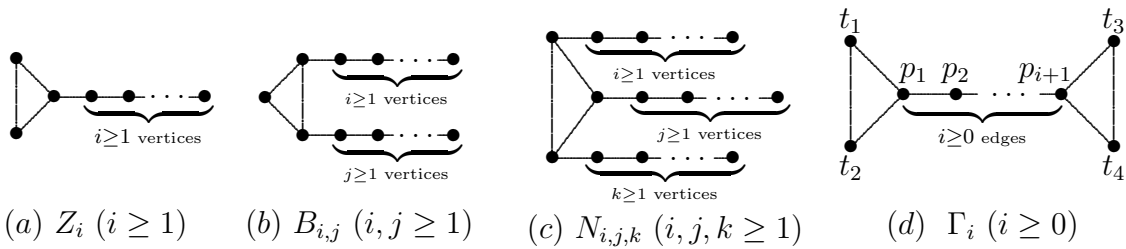

%
%
$$\bp
\setcoordinatesystem units <0.75mm,1.0mm>
\setplotarea x from -50 to 50, y from -5 to 5
\put
 {\bp
 \setcoordinatesystem units <1.0mm,1.0mm>
 \setplotarea x from 0 to 25, y from -5 to 5
 \put{$\bullet$} at    0    5
 \put{$\bullet$} at    0   -5
 \put{$\bullet$} at    5    0
 \put{$\bullet$} at   10    0
 \put{$\bullet$} at   15    0
 \put{$\ldots$}  at   20    0
 \put{$\bullet$} at   25    0
 \plot 5 0  0 -5  0 5  5 0  17 0 /
 \plot 23 0  25 0 /
 \put{$\underbrace{\hspace*{1.7cm}}_{i\geq 1\mbox{\tiny\ vertices}}$}
                             at 17.5 -4
 \put{$(a) \ Z_i$ $(i\geq 1$)} at  10.5 -17
 \ep}
at  -85   -1
\put
 {\bp
 \setcoordinatesystem units <1.0mm,1.0mm>
 \setplotarea x from -5 to 25, y from -5 to 5
 \put{$\bullet$} at    0    0
 \put{$\bullet$} at    5    5
 \put{$\bullet$} at   10    5
 \put{$\bullet$} at   15    5
 \put{$\ldots$}  at   20    5
 \put{$\bullet$} at   25    5
 \put{$\bullet$} at    5   -5
 \put{$\bullet$} at   10   -5
 \put{$\bullet$} at   15   -5
 \put{$\ldots$}  at   20   -5
 \put{$\bullet$} at   25   -5
 \plot 17 5 5 5 0 0 5 -5 17 -5 /
 \plot 23  5  25  5 /
 \plot 23 -5  25 -5 /
 \plot 5 -5  5 5 /
 \put{$\underbrace{\hspace*{1.7cm}}_{^{i\geq 1\mbox{\tiny\ vertices}}}$}
                             at 17.5  1.0
 \put{$\underbrace{\hspace*{1.7cm}}_{^{j\geq 1\mbox{\tiny\ vertices}}}$}
                             at 17.5  -9.0
 \put{$(b) \ B_{i,j}$ ($i,j\geq 1$)} at  12.5 -17
 \ep}
at   -42   -1.2
\put
 {\bp
 \setcoordinatesystem units <1.2mm,0.8mm>
 \setplotarea x from -25 to 25, y from -5 to 5
 \put{$\bullet$} at    0    5
 \put{$\bullet$} at    5    5
 \put{$\bullet$} at   10    5
 \put{$\ldots$}  at   15    5
 \put{$\bullet$} at   20    5
 \put{$\bullet$} at    5   -5
 \put{$\bullet$} at   10   -5
 \put{$\bullet$} at   15   -5
 \put{$\ldots$}  at   20   -5
 \put{$\bullet$} at   25   -5
 \put{$\bullet$} at    0  -15
 \put{$\bullet$} at    5  -15
 \put{$\bullet$} at   10  -15
 \put{$\ldots$}  at   15  -15
 \put{$\bullet$} at   20  -15
 \plot 0 5 5 -5 0 -15 0 5 /
 \plot 0  5  12 5 /
 \plot  18 5  20 5 /
 \plot 5 -5  17 -5 /
 \plot  25 -5  23 -5 /
 \plot 0  -15  12 -15 /
 \plot  18 -15  20 -15 /
 \put{$\underbrace{\hspace*{2.1cm}}_{^{i\geq 1\mbox{\tiny\ vertices}}}$}
                             at 12.5  0.4
 \put{$\underbrace{\hspace*{2.1cm}}_{^{j\geq 1\mbox{\tiny\ vertices}}}$}
                             at 17.5  -9.6
 \put{$\underbrace{\hspace*{2.1cm}}_{^{k\geq 1\mbox{\tiny\ vertices}}}$}
                             at 12.5  -19.6
 \put{$(c) \ N_{i,j,k}$ ($i,j,k\geq 1$)} at  12.5 -27
 \ep}
at   -4  0
\put
 {\bp
 \setcoordinatesystem units <1.4mm,1.4mm>
 \setplotarea x from 0 to 25, y from -5 to 5
 \put{$\bullet$} at    0   5
 \put{$\bullet$} at    0  -5
 \put{$\bullet$} at    5   0

 \put{$\bullet$} at   10   0
 \put{$\ldots$}  at   15   0
 \put{$\bullet$} at   20   0
 \put{$\bullet$} at   25   5
 \put{$\bullet$} at   25  -5
 \plot 5 0  0 -5  0 5  5 0  12 0 /
 \plot 18 0  20 0  25 5  25 -5  20 0 /
 \put{$t_1$}   at   0   7
 \put{$t_2$}   at   0  -7
 \put{$p_1$}   at   6  2
 \put{$p_2$}   at  10  2
 \put{$p_{i+1}$}  at  18.5  2
 \put{$t_3$}   at  25   7
 \put{$t_4$}   at  25  -7
 
 \put{$\underbrace{\hspace*{2.1cm}}_{i\geq 0\mbox{\tiny\ edges}}$}
                             at 12.5 -3.1
 \put{$(d) \ \ \Gamma_i$ $(i\geq 0)$} at  12.5 -12.5
 \ep}
at  70 1.8

\ep$$
\vspace*{-5mm}
\caption{The graphs $Z_i$, $B_{i,j}$, $N_{i,j,k}$ and $\Gi$}
\label{fig-special_graphs}
\end{figure}

\section{Introduction and main result}
\label{sec-main}

There are many results on forbidden induced subgraphs implying various 
Hamilton-type properties. While forbidden pairs of connected graphs 
for hamiltonicity in 2-connected graphs were completely characterized already 
in the early 90's \cite{Be91,FG97}, the progress in forbidden pairs for 
Hamilton-connectedness is relatively slow. 
For forbidden pairs of connected graphs, a list of potential candidates 
is known: one of them has to be the claw $\claw$, and the second one 
belongs to a list that will be mentioned in Section~\ref{sec-concluding}. 

\ms

Let $W$ denote the Wagner graph and $W^+$ the graph obtained from $W$ by
attaching exactly one pendant edge to each of its vertices 
(see Fig.~\ref{fig-Wagner}).

%
%
\begin{figure}[ht]
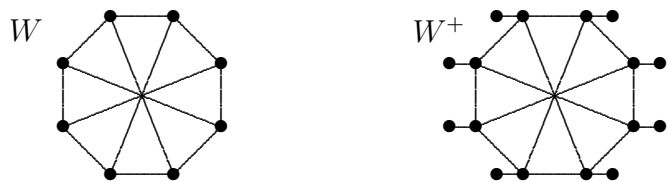

$$\beginpicture
\setcoordinatesystem units <0.7mm,1mm>
\setplotarea x from -70 to 70, y from -5 to 5
\put {\bp
\setcoordinatesystem units <0.35mm,0.35mm>
  \put{$\bullet$} at   -30   12
  \put{$\bullet$} at   -30  -12
  \put{$\bullet$} at    30   12
  \put{$\bullet$} at    30  -12
  \put{$\bullet$} at   -12   30
  \put{$\bullet$} at    12   30
  \put{$\bullet$} at   -12  -30
  \put{$\bullet$} at    12  -30
  \plot -30 -12 -30 12 -12 30 12 30 30 12 30 -12 12 -30
        -12 -30 -30 -12 /
  \plot -30  12  30 -12 /
  \plot -30 -12  30  12 /
  \plot -12  30  12 -30 /
  \plot -12 -30  12  30 /
 \put{$W$}   at  -44    25
\ep} at -40 0
\put {\bp
\setcoordinatesystem units <0.35mm,0.35mm>
  \put{$\bullet$} at   -30   12
  \put{$\bullet$} at   -30  -12
  \put{$\bullet$} at    30   12
  \put{$\bullet$} at    30  -12
  \put{$\bullet$} at   -12   30
  \put{$\bullet$} at    12   30
  \put{$\bullet$} at   -12  -30
  \put{$\bullet$} at    12  -30
  \plot -30 -12 -30 12 -12 30 12 30 30 12 30 -12 12 -30
        -12 -30 -30 -12 /
  \plot -30  12  30 -12 /
  \plot -30 -12  30  12 /
  \plot -12  30  12 -30 /
  \plot -12 -30  12  30 /
  \put{$\bullet$} at   -40   12
  \put{$\bullet$} at   -40  -12
  \put{$\bullet$} at    40   12
  \put{$\bullet$} at    40  -12
  \put{$\bullet$} at   -22   30
  \put{$\bullet$} at    22   30
  \put{$\bullet$} at   -22  -30
  \put{$\bullet$} at    22  -30
  \plot -40  12 -30  12 /
  \plot -40 -12 -30 -12 /
  \plot  40  12  30  12 /
  \plot  40 -12  30 -12 /
  \plot -22  30 -12  30 /
  \plot -22 -30 -12 -30 /
  \plot  22  30  12  30 /
  \plot  22 -30  12 -30 /
 \put{$W^+$}   at  -44    25
%
\ep} at 40 0
\endpicture$$
\vspace*{-6mm}
\caption{The Wagner graph $W$ and the graph $W^+$}
\label{fig-Wagner}
\vspace*{-2mm}
\end{figure}

Theorem~\ref{thmA-known_results} below lists the best known results on pairs of 
forbidden subgraphs implying Hamilton-connectedness of a 3-connected graph.

%
%
\begin{theoremAcite}{\cite{BGHJFW14,BFHTV02,LRVXY23-I,LRVXY23-II,LXL21,RV21,RV23}}
\label{thmA-known_results}
%
Let $G$ be a 3-connected $\{\claw,X\}$-free graph, where
\begin{mathitem}
\item {\bf ~\cite{BFHTV02}} $X=\Gamma_1$, or
\item {\bf ~\cite{BGHJFW14}} $X=P_9$, or
\item {\bf ~\cite{RV21}} $X=Z_7$ and $G\niso L(W^+)$, or 
\item {\bf ~\cite{RV23}} $X=B_{i,j}$ for $i+j\leq 7$, or
\item {\bf ~\cite{LRVXY23-I,LRVXY23-II,LXL21}} $X=N_{i,j,k}$ for $i+j+k\leq 7$.
\end{mathitem} 
Then $G$ is Hamilton-connected.
\end{theoremAcite}

Let $\cW$ be the family of graphs obtained by attaching at least one pendant edge
to each of the vertices of the Wagner graph $W$, and let 
$\cG=\{L(H)|\ H\in\cW\}$ be the family of their line graphs.
Then any $G\in\cG$ is 3-connected, non-Hamilton-connected, $P_{10}$-free,
$Z_8$-free, $B_{i,j}$-free for $i+j=8$ and $N_{i,j,k}$-free for $i+j+k=8$. 
Thus, this example shows that parts $(ii)$, $(iii)$, $(iv)$ and $(v)$ of 
Theorem~\ref{thmA-known_results} are sharp.

\bs

The following theorem is our main result.

%
%
\begin{theorem}
\label{thm-main}
Every $3$-connected $\{\claw,\Gamma_3\}$-free graph is Hamilton-connected.
\end{theorem}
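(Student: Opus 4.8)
The plan is to follow the well-established route for this family of results, combining a closure reduction with a detailed structural analysis of line graphs of multigraphs. First I would observe that by the closure technique developed in the companion paper (the one referenced for the ``new closure technique''), it suffices to prove the statement for graphs that are closed under this operation. A closed $\{\claw,\Gamma_3\}$-free graph is the line graph of a multigraph $H$ with certain additional properties (essentially $H$ is triangle-free away from multiedges, or more precisely $H$ lies in some restricted class forced by $\Gamma_3$-freeness), so the problem translates into finding, for any two edges $e,f$ of $H$, a dominating $(e,f)$-trail in $H$ — a trail from $e$ to $f$ that contains at least one endvertex of every edge of $H$. This is the standard Harary--Nash-Williams type equivalence: hamiltonian $(u,v)$-paths in $L(H)$ correspond to dominating $(e_u,e_v)$-trails in $H$.

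The next step is to pin down the structure of $H$. Since $G=L(H)$ is $3$-connected, $H$ is essentially $3$-edge-connected; since $G$ is claw-free, every vertex of $H$ has at most two ``real'' directions (the line graph of $H$ being claw-free forces each vertex-neighborhood in $L(H)$ to be coverable by two cliques); and $\Gamma_3$-freeness of $L(H)$ translates into a forbidden configuration in $H$ of two ``local triangles/multiedges'' linked by a path of the appropriate length. I would catalog the possible local configurations — multitriangles, diamonds, and short chordless cycles — and use them to show $H$ decomposes in a controlled way, with only boundedly many exceptional ``core'' pieces and long paths/cycles connecting them. The exceptional graphs in the class $\cG$ (line graphs of pendant-edge augmentations of the Wagner graph) will of course be the ones one must check by hand or exclude, but here they are not excluded — rather $\Gamma_3$-freeness is restrictive enough that the Wagner-type obstructions do not arise, so no exceptions remain in the conclusion.

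With the structure in hand, I would then argue the existence of the required dominating trail. The idea is to handle the bounded ``core'' part of $H$ by an exhaustive (computer-assisted) case analysis — this is where the ``most technical steps are computer-assisted'' remark points — verifying that every choice of the two terminal edges within or near the core extends to a dominating trail, and then to route the long paths and cycles of $H$ into this trail greedily, using $3$-edge-connectivity to splice cycles in and local claw-freeness to keep the trail dominating. Degenerate cases (terminals close together, terminals inside a multiedge or diamond, small $H$) would be dispatched separately.

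The hard part will be the structural classification of the closed graphs together with the finite case-check on the core: one must be certain the list of local configurations forced by $\{\claw,\Gamma_3\}$-freeness is complete, and that the finitely many ``irreducible'' cores — each with every pair of potential terminal edges — all admit the dominating trail, which is exactly the part that is infeasible by hand and must be certified by computer. A secondary difficulty is making the closure step genuinely reduce to the line-graph setting for \emph{Hamilton-connectedness} (as opposed to mere hamiltonicity), since preserving the existence of hamiltonian paths between a prescribed pair of vertices under the closure is more delicate than preserving hamiltonicity; fortunately that is precisely what the cited previous paper is set up to provide, so I would invoke it as a black box.
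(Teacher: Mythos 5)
Your outline matches the paper's architecture closely: $\Gamma_3$-closure to pass to a line graph of a multigraph $H$, translation of hamiltonian $(u,v)$-paths in $L(H)$ into (internally) dominating $(e,f)$-trails in $H$ via the Harary--Nash-Williams/Li--Lai--Zhan correspondence, a structural analysis of $H$ driven by the forbidden preimages of $\Gamma_3$ (plus diamonds, multitriangles, etc.), a computer-assisted verification of a finite list of short-circumference cores, and a hand argument routing long chordless cycles--- exactly the split used in the paper (Case~1/Case~2 on a shortest cycle of length $\geq 7$). The one wording to tighten: the reduction is to \emph{internally} dominating $(e,f)$-trails rather than dominating trails, and the $\Gamma_3$-closed graph's preimage is not ``triangle-free away from multiedges'' --- triangles remain, but with degree-$2$ vertices forbidden and diamonds/multitriangles excluded --- though neither imprecision changes the strategy.
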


\noi
{\bf Proof} of Theorem~\ref{thm-main} is postponed to 
Section~\ref{sec-proof-main}.

\bs

In Section~\ref{sec-preliminaries}, we collect necessary known results and 
facts on line graphs and on closure operations, and then, in 
Section~\ref{sec-Gt-closure}, we present a closure technique, introduced in 
the previous paper \cite{KRSV???-I}, that will be
crucial for the proof of the main result. 
Section~\ref{sec-proof-main} contains the proof of the main result,
in which the most technical parts (namely, the proof of Lemma~\ref{lemmaX},
and the introductory part and Case~1 of the proof of Theorem~\ref{thm-main}) 
are computer-assisted.
More details on the computation can be found in Section~\ref{sec-concluding}, 
and detailed results of the computation and source codes are available 
at~\cite{computing1} and \cite{computing2}.
Finally, in Section~\ref{sec-concluding}, we briefly update the discussion of 
remaining open cases in the characterization of forbidden pairs for 
Hamilton-connectedness from \cite{LRVXY23-II} and~\cite{RV23}.

\section{Preliminaries}
\label{sec-preliminaries}

In Subsections~\ref{subsec-linegraph-preimage} -- \ref{subsec-closure}, 
we summarize some known facts that will be needed in the proof of 
Theorem~\ref{thm-main}.

\subsection{Line graphs of multigraphs and their preimages}
\label{subsec-linegraph-preimage}

The following characterization of line graphs of multigraphs was proved 
by Bermond and Meyer~\cite{BM73} (see also Zverovich~\cite{Z97}).

%
%
\begin{theoremAcite}{\cite{BM73}}
\label{thmA-BeMe}
A graph $G$ is a line graph {\em of a multigraph} if and only if
$G$ does not contain a copy of any of the graphs in Figure~\ref{fig-BeMe} as an induced subgraph.
\end{theoremAcite}

\vspace*{-5mm}

%
%
\begin{figure}[ht]
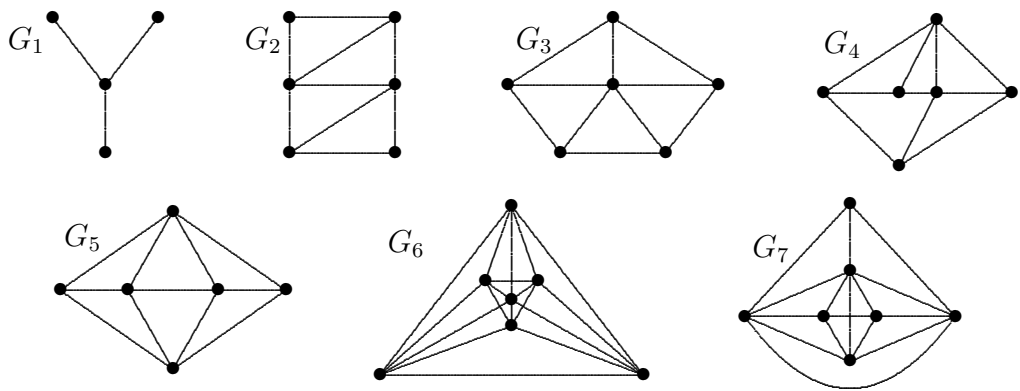

$$\bp
\setcoordinatesystem units <0.9mm,0.75mm>
\setplotarea x from -50 to 50, y from 0 to 45
\put{
\bp
\setcoordinatesystem units <0.7mm,.6mm>
\setplotarea x from -20 to 20, y from -20 to 15
\put{$\bullet$} at -10  15
\put{$\bullet$} at  10  15
\put{$\bullet$} at   0   0
\put{$\bullet$} at   0 -15
\plot -10 15  0 0  10 15 /
\plot 0 0  0 -15 /
\put{$G_1$} at -15 10
\ep} at -60 35
\put{
\bp
\setcoordinatesystem units <0.7mm,.6mm>
\setplotarea x from -20 to 20, y from -20 to 15
\put{$\bullet$} at -10  15
\put{$\bullet$} at  10  15
\put{$\bullet$} at -10   0
\put{$\bullet$} at  10   0
\put{$\bullet$} at -10 -15
\put{$\bullet$} at  10 -15
\plot -10 -15 -10 15 10 15 10 -15 -10 -15 10 0 -10 0 10 15 /
\put{$G_2$} at -15 10
\ep} at -25 35
\put{
\bp
\setcoordinatesystem units <0.7mm,.6mm>
\setplotarea x from -20 to 20, y from -20 to 15
\put{$\bullet$} at   0  15
\put{$\bullet$} at   0   0
\put{$\bullet$} at -20   0
\put{$\bullet$} at  20   0
\put{$\bullet$} at -10 -15
\put{$\bullet$} at  10 -15
\plot 0 15 -20 0 -10 -15 10 -15 20 0 0 15 0 0 -10 -15 /
\plot -20 0 20 0 /
\plot 0 0  10 -15 /
\put{$G_3$} at -15 10
\ep} at 15 35
\put{
\bp
\setcoordinatesystem units <0.5mm,.65mm>
\setplotarea x from -20 to 20, y from -20 to 20
\put{$\bullet$} at -25   0
\put{$\bullet$} at  -5   0
\put{$\bullet$} at   5   0
\put{$\bullet$} at  25   0
\put{$\bullet$} at   5  15
\put{$\bullet$} at  -5 -15
\plot -25 0 25 0  5 15 -25 0  -5 -15 5 0  5 15  -5 0 /
\plot -5 -15  25 0 /
\put{$G_4$} at -20 10
\ep} at  60 35
\put{
\bp
\setcoordinatesystem units <0.6mm,.7mm>
\setplotarea x from -20 to 20, y from -20 to 20
\put{$\bullet$} at   0  15
\put{$\bullet$} at   0 -15
\put{$\bullet$} at -25   0
\put{$\bullet$} at -10   0
\put{$\bullet$} at  10   0
\put{$\bullet$} at  25   0
\plot -25 0  25 0  0 15 -25 0  0 -15 -10 0  0 15  10 0  0 -15 25 0 /
\put{$G_5$} at -20 10
\ep} at -50 0
\put{
\bp
\setcoordinatesystem units <0.7mm,.5mm>
\setplotarea x from -20 to 20, y from -20 to 20
\put{$\bullet$} at   5  20
\put{$\bullet$} at   0   0
\put{$\bullet$} at  10   0
\put{$\bullet$} at   5  -5
\put{$\bullet$} at   5 -12
\put{$\bullet$} at -20 -25
\put{$\bullet$} at  30 -25
\plot 5 20 -20 -25  30 -25  5 20  0 0  -20 -25  5 -12  30 -25  10 0
    5 20  5 -12  0 0  10 0  5 -12 /
\plot -20 -25  5 -5  0 0 /
\plot 30 -25  5 -5  10 0 /
\put{$G_6$} at -15 10
\ep} at   0 0
\put{
\bp
\setcoordinatesystem units <0.7mm,.6mm>
\setplotarea x from -20 to 20, y from -20 to 20
\put{$\bullet$} at   0  20
\put{$\bullet$} at   0   5
\put{$\bullet$} at -20  -5
\put{$\bullet$} at  20  -5
\put{$\bullet$} at  -5  -5
\put{$\bullet$} at   5  -5
\put{$\bullet$} at  0  -15
\plot 0 20 -20 -5  20 -5  0 20  0 -15 -20 -5  0 5  20 -5
   0 -15  -5 -5  0 5  5 -5  0 -15 /
\setquadratic
\plot -20 -5  0 -21  20 -5 /
\setlinear
\put{$G_7$} at -15 10
\ep} at   50 0
\ep$$
\caption{Forbidden subgraphs for line graphs of multigraphs}
\label{fig-BeMe}
\end{figure}

While in line graphs of graphs, for a connected line graph $G$, the graph $H$ 
such that $G=L(H)$ is uniquely determined with a single exception of $G=K_3$, 
in line graphs of multigraphs this is not true: a simple example is the graphs 
$H_1=Z_1$ and $H_2$ a double edge with one pendant edge attached to each vertex
-- while $H_1\not\iso H_2$, we have $L(H_1)\iso L(H_2)$.
Using a modification of an approach from \cite{Z97}, the following was proved 
in \cite{RV11}.

%
%
\begin{theoremAcite}{\cite{RV11}}
\label{thmA-vzor_jedn}
Let $G$ be a connected line graph of a multigraph. Then there is, up to 
an isomorphism, a uniquely determined multigraph $H$ such that $G=L(H)$ and
a vertex $e\in V(G)$ is simplicial in $G$ if and only if the corresponding 
edge $e\in E(H)$ is a pendant edge in $H$.
\end{theoremAcite}

The multigraph $H$ with the properties given in Theorem~\ref{thmA-vzor_jedn}
will be called the {\em preimage} of a line graph $G$ and denoted $H=\Lp(G)$.
We will also use the notation $a=L(e)$ and $e=\Lp(a)$ for an edge $e\in E(H)$
and the corresponding vertex $a\in V(G)$.

\ms

An edge-cut $R\subset E(H)$ of a multigraph $H$ is {\em essential} if $H-R$ has 
at least two nontrivial components, and $H$ is 
{\em essentially $k$-edge-connected} if every essential edge-cut of $H$ is 
of size at least $k$. It is a well-known fact that a line graph $G$ is 
$k$-connected if and only if $\Lp(G)$ is essentially $k$-edge-connected.
It is also a well-known fact that if $X$ is a line graph, then a line graph $G$
is $X$-free if and only if $\Lp(G)$ does not contain as a subgraph (not 
necessarily induced) a graph $F$ such that $L(F)=X$.

Note that in the special case of the graph $\Gt$, there are three 
nonisomorphic multigraphs $F_1,F_2,F_3$ such that $L(F_i)=\Gt$, 
$i=1,2,3$, see Fig.~\ref{fig-Preimages}.
It is straightforward to verify that if $G=\Gt$, then $\Lp(G)=F_1$, 
however, if $G$ contains $F\iso\Gt$ as a proper induced subgraph, then the 
corresponding subgraph $\Lp(F)$ can be any of $F_1,F_2,F_3$ (for example, 
if $G$ is obtained from $\Gt$ by adding to $F=\Gt$ new vertices $u_1,u_2$
and edges $u_1u_2,u_2t_1,u_2t_2$, then in $\Lp(G)$, $\Lp(F)$ is the 
multigraph $F_2$).

%
%
\begin{figure}[ht]
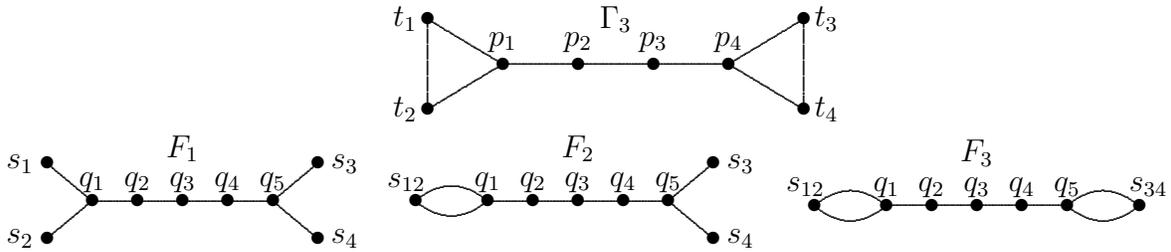

$$\beginpicture
\setcoordinatesystem units <1.2mm,1mm>
\setplotarea x from -70 to 70, y from -5 to 5
\put {\bp
\setcoordinatesystem units <1.0mm,1.2mm>
  \put{$\bullet$} at  -25  -5
  \put{$t_2$} at  -28  -5
  \put{$\bullet$} at  -25   5
  \put{$t_1$} at  -28   5
  \put{$\bullet$} at  -15   0
  \put{$p_1$} at  -15   2.5
  \put{$\bullet$} at   -5   0
  \put{$p_2$} at   -5   2.5
  \put{$\bullet$} at    5   0
  \put{$p_3$} at    5   2.5
  \put{$\bullet$} at   15   0
  \put{$p_4$} at   15   2.5
  \put{$\bullet$} at   25  -5
  \put{$t_4$} at   28  -5
  \put{$\bullet$} at   25   5
  \put{$t_3$} at   28   5
  \plot -15 0  -25 -5  -25 5 -15 0  15 0  25 5  25 -5  15 0 /
 \put{$\Gamma_3$}   at  0  5
\ep} at 0 17
\put {\bp
\setcoordinatesystem units <1.2mm,1.0mm>
  \put{$\bullet$} at  -15  -5
  \put{$\bullet$} at  -15   5
  \put{$\bullet$} at  -10   0
  \put{$\bullet$} at   -5   0
  \put{$\bullet$} at    0   0
  \put{$\bullet$} at    5   0
  \put{$\bullet$} at   10   0
  \put{$\bullet$} at   15  -5
  \put{$\bullet$} at   15   5
  \put{$s_1$} at  -18   5
  \put{$s_2$} at  -18  -5
  \put{$s_3$} at   18   5
  \put{$s_4$} at   18  -5
  \put{$q_1$} at  -10   2.5
  \put{$q_2$} at   -5   2.5
  \put{$q_3$} at    0   2.5
  \put{$q_4$} at    5   2.5
  \put{$q_5$} at   10   2.5
  \plot -10 0   10 0  /
  \plot -15 5  -10 0  -15 -5  /
  \plot  15 5   10 0   15 -5  /
 \put{$F_1$}   at  0  7
\ep} at -48 0
\put {\bp
\setcoordinatesystem units <1.2mm,1.0mm>
  \put{$\bullet$} at  -18   0
  \put{$\bullet$} at  -10   0
  \put{$\bullet$} at   -5   0
  \put{$\bullet$} at    0   0
  \put{$\bullet$} at    5   0
  \put{$\bullet$} at   10   0
  \put{$\bullet$} at   15  -5
  \put{$\bullet$} at   15   5
  \put{$s_{12}$} at  -19   2.5
  \put{$s_3$} at   18   5
  \put{$s_4$} at   18  -5
  \put{$q_1$} at  -10   2.5
  \put{$q_2$} at   -5   2.5
  \put{$q_3$} at    0   2.5
  \put{$q_4$} at    5   2.5
  \put{$q_5$} at   10   2.5
  \plot -10 0   10 0  /
  \plot  15 5   10 0   15 -5  /
  \setquadratic
  \plot -18 0  -14  2  -10 0 /
  \plot -18 0  -14 -2  -10 0 /
  \setlinear
 \put{$F_2$}   at  0  7
\ep} at  -5 0
\put {\bp
\setcoordinatesystem units <1.2mm,1.0mm>
  \put{$\bullet$} at  -18   0
  \put{$\bullet$} at   18   0
  \put{$\bullet$} at  -10   0
  \put{$\bullet$} at   -5   0
  \put{$\bullet$} at    0   0
  \put{$\bullet$} at    5   0
  \put{$\bullet$} at   10   0
%
  \put{$s_{12}$} at  -19   2.5
  \put{$s_{34}$} at   19   2.5
  \put{$q_1$} at  -10   2.5
  \put{$q_2$} at   -5   2.5
  \put{$q_3$} at    0   2.5
  \put{$q_4$} at    5   2.5
  \put{$q_5$} at   10   2.5
  \plot -10 0   10 0  /
  \setquadratic
  \plot -18 0  -14  2  -10 0 /
  \plot -18 0  -14 -2  -10 0 /
  \plot  18 0   14  2   10 0 /
  \plot  18 0   14 -2   10 0 /
  \setlinear
 \put{$F_3$}   at  0  7
\ep} at  40 2
\endpicture$$
\caption{The graph $\Gt$ and its three preimages}
\label{fig-Preimages}
\end{figure}

Recall that a closed trail $T$ is a {\em dominating closed trail} (abbreviated DCT) if 
$T$ dominates all edges of $G$, and an $(e,f)$-trail is {\em an internally dominating 
$(e,f)$-trail} (abbreviated $(e,f$)-IDT) if $\Int(T)$ dominates all edges of $G$.
Harary and Nash-Williams \cite{HNW65} established a correspondence between a DCT 
in $H$ and a hamiltonian cycle in $L(H)$.
A similar result showing that $G=L(H)$ is Hamilton-connected if and only 
if $H$ has an $(e_1,e_2)$-IDT for any pair of edges $e_1,e_2\in E(H)$, was 
given in \cite{LLZ05}
(in fact, part $(ii)$ of the following theorem is slightly stronger than the 
result from \cite{LLZ05}, and its easy proof is given in \cite{LRVXY23-I}).
Note that these results were proved for line graphs of graphs but it is easy 
to verify that they remain true also for line graphs of multigraphs.

%
%
\begin{theoremAcite}{\cite{HNW65,LLZ05}}
\label{thmA-DCT+IDT} 
Let $H$ be a multigraph with $|E(H)|\geq 3$ and let $G=L(H)$.
\begin{mathitem}
\item {\bf\cite{HNW65}} The graph $G$ is hamiltonian if and only if $H$ has a DCT.
\item {\bf\cite{LLZ05}}  For every $e_i\in E(H)$ and $a_i=L(e_i)$, $i=1,2$,
      $G$ has a hamiltonian $(a_1,a_2)$-path if and only if $H$ has an 
      $(e_1,e_2)$-IDT.
\end{mathitem}
\end{theoremAcite}

\bsm

\subsection{Strongly spanning trailable multigraphs}
\label{subsec-strongly_spanning_tr}

A multigraph $H$ is {\em strongly spanning trailable} if for any 
$e_1=u_1v_1,e_2=u_2v_2\in E(H)$ (possibly $e_1=e_2)$, the multigraph $H(e_1,e_2)$, 
which is obtained from $H$ by replacing the edge $e_1$ by a path $u_1v_{e_1}v_1$ 
and the edge $e_2$ by a path $u_2v_{e_2}v_2$, has a spanning 
$(v_{e_1},v_{e_2})$-trail. 

By Theorem~\ref{thmA-DCT+IDT}$(ii)$, it is straightforward to see that if
$H$ is strongly spanning trailable, then $G=L(H)$ is Hamilton-connected.

\ms

We will need the following two results on ``small" strongly spanning trailable 
multigraphs from \cite{LXL21}.
Here, $\dvW$ is the set of multigraphs that are obtained from the Wagner graph 
$W$ by subdividing one of its edges and adding at least one edge between the new 
vertex and exactly one of its neighbors.

%
%
\begin{theoremAcite}{\cite{LXL21}}
\label{thmA-strongly_trailable}
\begin{mathitem}
\item Every 2-connected 3-edge-connected multigraph $H$ with circumference 
   $c(H)\leq 8$ other than the Wagner graph $W$ is strongly spanning trailable.
\item Every 3-edge-connected multigraph $H$ with $|V(H)|\leq 9$ such that 
      $H\notin \{W\}\cup \dvW$ is strongly spanning trailable.
\end{mathitem}
\end{theoremAcite}

\subsection{The core of the preimage of a 3-connected line graph}
\label{subsec-core}

To avoid difficulties that can occur with the core of a multigraph, we define 
the core only for the case we need, i.e., for the preimage of a 3-connected 
line graph (then e.g. vertices of degree~2 are independent by the connectivity 
assumption, and pendant multiedges cannot occur by Theorem~\ref{thmA-vzor_jedn}).

Thus, let $G$ be a 3-connected line graph and let $H=\Lp(G)$. 
The {\em core of $H$} is the multigraph $\co(H)$ obtained from $H$ by 
removing all pendant edges and suppressing all vertices of degree 2.

Shao~\cite{S05} proved the following properties of the core of a multigraph.

%
%
\begin{theoremAcite}{\cite{S05}}
\label{thmA-core}
Let $H$ be an essentially 3-edge-connected multigraph. Then
\begin{mathitem}
\item $\co(H)$ is uniquely determined,
\item $\co(H)$ is 3-edge-connected,
\item $V(\co(H))$ dominates all edges of $H$,
\item if $\co(H)$ has a spanning closed trail, then $H$ has a DCT,
\item if $\co(H)$ is strongly spanning trailable, then $L(H)$ is Hamilton-connected.
\end{mathitem}
\end{theoremAcite}

\subsection{Closure operations}
\label{subsec-closure}

For $x\in V(G)$, the {\em local completion of $G$ at $x$} is the
graph $G^{^*}_x=(V(G),E(G)\cup\{y_1y_2|\ y_1,y_2\in N_G(x)\})$ (i.e.,
$G^{^*}_x$ is obtained from $G$ by adding all the missing edges with
both vertices in $N_G(x)$). In this context, the edges in $E(\Gstx)\sm E(G)$
will be refereed to {\em new edges}, and the edges in $E(G)$ are {\em old}.
Obviously, if $G$ is claw-free, then so is $G^{^*}_x$.
Note that in the special case when $G$ is a line graph and $H=\Lp(G)$, 
$G^{^*}_x$ is the line graph of the multigraph $H|_{e}$ obtained from $H$ by 
contracting the edge $e=\Lp(x)$ into a vertex and replacing the created 
loop(s) by pendant edge(s) (Thus, if $G=L(H)$ and $x=L(e)$, then 
$\Gst_x=L(H|_e)$).

Also note that clearly $x\in V_{SI}(G^{^*}_x)$ for any $x\in V(G)$, and, more 
generally, $V_{SI}(G)\subset V_{SI}(G^{^*}_x)$ for any $x\in V(G)$.

\ms

We say that a vertex $x\in V(G)$ is {\em eligible} if $\la N_G(x)\rag$ 
is a connected noncomplete graph, and we use $V_{EL}(G)$ to denote the set of 
all eligible vertices of $G$. Note that in the special case when $G$ is a 
line graph and $H=\Lp(G)$, it is not difficult to observe that $x\in V(G)$ is 
eligible if and only if the edge $\Lp(x)$ is in a triangle or in a multiple edge
of $H$.
Based on the fact that if $G$ is claw-free and $x\in V_{EL}(G)$, then
$G^{^*}_x$ is hamiltonian if and only if $G$ is hamiltonian, the
{\em closure} $\cl(G)$ of a claw-free graph $G$ was defined in \cite{R97} as
the graph obtained from $G$ by recursively performing the local completion
operation at eligible vertices, as long as this is possible (more precisely:
$\cl(G)=G_k$, where $G_1,\ld,G_k$ is a sequence of graphs such that $G_1=G$,
$G_{i+1}=(G_i)^{^*}_{x_i}$ for some $x_i\in V_{EL}(G)$, $i=1,\ld,k-1$,
and $V_{EL}(G_k)=\emptyset$).
The closure $\cl(G)$ of a claw-free graph $G$ is uniquely determined, is a 
line graph of a triangle-free graph, and is hamiltonian if and only if so 
is $G$. However, as observed in \cite{BFR00}, the closure operation
does not preserve the (non-)Hamilton-connectedness of~$G$.

\bs

To handle this problem, the closure concept was strengthened in 
\cite{LRVXY23-I} by omitting the eligibility assumption for the application 
of the local completion operation.
Specifically, for a given claw-free graph $G$, we construct a graph $G^U$ 
by the following construction.

\begin{mathitem}
\item If $G$ is Hamilton-connected, we set $G^U=K_{|V(G)|}$.
\item If $G$ is not Hamilton-connected, we recursively perform the
  local completion operation at such vertices for which the
  resulting graph is still not Hamilton-connected, as long as this is
  possible. We obtain a sequence of graphs $G_1,\ld,G_k$ such that
  \begin{mathitem}
    \item[$\bullet$] $G_1=G$,
    \item[$\bullet$] $G_{i+1}=(G_i)^{^*}_{x_i}$ for some
        $x_i\in V(G_i)$, $i=1,\ld,k-1$,
    \item[$\bullet$] $G_k$ has no hamiltonian $(a,b)$-path for some
        $a,b\in V(G_k)$,
     \item[$\bullet$] for any $x\in V(G_k)$, $(G_k)^{^*}_x$ is
        Hamilton-connected,
  \end{mathitem}
  and we set $G^U=G_k$.
\end{mathitem}
A graph $G^U$ obtained by the above construction is called an
{\em ultimate M-closure} (or briefly a {\em UM-closure}) of the
graph $G$, and a graph $G$ equal to its UM-closure is said to be 
{\em UM-closed}.

\ms

The following theorem summarizes basic properties of the UM-closure operation.

%
%
\begin{theoremAcite}{\cite{LRVXY23-I}} 
\label{thmA-ultim-clos}
Let $G$ be a claw-free graph and let $G^U$ be one of its UM-closures.
Then $G^U$ has the following properties:
\begin{mathitem}
\item $V(G)=V(G^U)$ and $E(G)\subset E(G^U)$,
\item $G^U$ is obtained from $G$ by a sequence of local completions at
        vertices,
\item $G$ is Hamilton-connected if and only if $G^U$ is
         Hamilton-connected,
\item if $G$ is Hamilton-connected, then $G^U=K_{|V(G)|}$,
\item if $G$ is not Hamilton-connected, then $(G^U)^{^*}_x$ is
              Hamilton-connected for any $x\in V(G^U)$,
\item $G^U=L(H)$, where $\co(H)$ contains no diamond, no multitriangle and no 
       triple edge, and either
   \begin{mathitem}
   \item[$(\alpha)$] at most 2 triangles and no multiedge, or
   \item[$(\beta)$] no triangle, at most one double edge and no other
         multiedge, and if $\co(H)$ contains a double edge, then this double edge 
         is also in $H$,
   \end{mathitem}
\item[$(vii)$] if $G^U$ contains no hamiltonian $(a,b)$-path for some
   $a,b\in V(G^U)$ and
   \begin{mathitem}
   \item[$(\alpha)$] $X$ is a triangle in $\co(H)$, then
         $E(X)\cap\{L_{G^U}^{-1}(a),L_{G^U}^{-1}(b)\}\neq\emptyset$,
   \item[$(\beta)$] $X$ is a multiedge in $\co(H)$, then
         $E(X)=\{L_{G^U}^{-1}(a),L_{G^U}^{-1}(b)\}$.
   \end{mathitem}
\end{mathitem}
\end{theoremAcite}

We will also need the following lemma from \cite{RV14}.

%
%
\begin{lemmaAcite}{\cite{RV14}} 
\label{lemmaA-SMclos-degree2}
Let $G$ be an SM-closed graph and let $H=\Lp(G)$. Then $H$ does not contain 
a triangle with a vertex of degree 2 in $H$.
\end{lemmaAcite}

Note that Lemma~\ref{lemmaA-SMclos-degree2} was proved in \cite{RV14} for 
SM-closed graphs (which we do not define here), but since every UM-closed graph 
is also SM-closed (see e.g. \cite{LRVXY23-I}), it is true also for UM-closed 
graphs.

\section{$\Gt$-closure}
\label{sec-Gt-closure}

The UM-closure operation preserves Hamilton-connectedness, but there is
still a problem that the local completion $G^{^*}_{x}$ of a $\{\claw,\Gt\}$-free 
graph $G$ is not necessarily $\Gt$-free. 
To handle this problem, we define the concept of a $\Gt$-closure $G^{\Gt}$ of a 
$\{\claw,\Gt\}$-free graph $G$.
For a set $M=\{x_1,x_2,\ldots,x_k\}\subset V(G)$, we set 
$G^{^*}_M=((G^{^*}_{x_1})^{^*}_{x_2}\ldots )^{^*}_{x_k}$. It is 
implicit in the proof of uniqueness of $\cl(G)$ in \cite{R97} (and easy
to see) that, for a given set $M=\{x_1,x_2,\ldots,x_k\}\subset V(G)$,
$G^{^*}_M$ is uniquely determined (i.e., does not depend on the order of the 
vertices $x_1,x_2,\ldots,x_k$ used during the construction).

\ms

If $G$ is not Hamilton-connected, then a vertex $x\in V_{NS}(G)$, for 
which the graph $\Gstx$ is still not Hamilton-connected, is said to be 
{\em feasible in $G$}. A set of vertices $M\subset V(G)$ is said to be 
{\em feasible in $G$} if the vertices in $M$ can be ordered in a sequence 
$x_1,\ldots,x_k$ such that $x_1$ is feasible in $G_0=G$, and $x_{i+1}$
is feasible in $G_i=(G_{i-1})^{^*}_{x_i}$, $i=1,\ldots,k-1$.
Thus, if $M\subset V(G)$ is feasible, then $M\subset V_{SI}(\Gst_M)$, 
but $\Gst_M$ is still not Hamilton-connected.

Note that it is possible that some two vertices $x,y$ of a graph $G$ 
are feasible in $G$, but $x$ is not feasible in $\Gst_y$ 
(for example, if $H$ is obtained from the Petersen graph by adding a
pendant edge to each vertex, subdividing a nonpendant edge $x_1x_2$ 
with a vertex $w$, replacing each of the edges $x_iw$ with a double edge, 
and if $G=L(H)$ and $x'_i,x''_i\in V(G)$ correspond to the two edges joining 
$x_i$ and $w$ in $H$, $i=1,2$, then $G$ is not Hamilton-connected, each of
the vertices $x'_i,x''_i$ is feasible in $G$, $i=1,2$, but e.g. $x'_1$
and $x''_1$ are not feasible in $\Gst_{x'_2}\iso\Gst_{x''_2}$).
Thus, the recursive form of the definition is essential for verifying 
feasibility of a set $M\subset V(G)$ (although the resulting graph $\Gst_M$ 
does not depend on their order).

Recall that in the special case when $G=L(H)$, a local completion at a vertex 
$x\in V(G)$ corresponds to the contraction of the corresponding edge 
$e=\Lp(H)$. In this case, when $x=L(e)$ is feasible in $G=L(H)$, we also say 
that the edge $e\in E(H)$ is {\em contractible in $H$}, and, similarly, 
if a set $M\subset V(G)$ is feasible, then the corresponding set of edges
$\Lp(M)\subset E(H)$ is said to be contractible in $H$.

\ms

Now, for a $\{\claw,\Gt\}$-free graph $G$, we define its $\Gt$-closure 
$G^{\Gt}$ by the following construction.
\begin{mathitem}
\item If $G$ is Hamilton-connected, we define $G^{\Gt}$ as the complete 
      graph.
\item If $G$ is not Hamilton-connected, we recursively perform the
  local completion operation at such feasible sets of vertices for which 
  the resulting graph is still $\Gt$-free, as long as this is possible. 
  We obtain a sequence of graphs $G_1,\ld,G_k$ such that
  \begin{mathitem}
    \item[$\bullet$] $G_1=G$,
    \item[$\bullet$] $G_{i+1}=(G_i)^{^*}_{M_i}$ for some
        set $M_i\subset V(G_i)$, $i=1,\ld,k-1$,
    \item[$\bullet$] $G_k$ has no hamiltonian $(a,b)$-path for some
        $a,b\in V(G_k)$,
    \item[$\bullet$] for any feasible set $M\subset V_{NS}(G_k)$, 
    $(G_k)^{^*}_M$ contains an induced subgraph isomorphic to $\Gt$,
  \end{mathitem}
  and we set $G^{\Gt}=G_k$.
\end{mathitem}
A resulting graph $G^{\Gt}$ is called a {\em $\Gt$-closure} of the graph 
$G$, and a graph $G$ equal to (some) its $\Gt$-closure is said to be 
{\em $\Gt$-closed}.
Note that for a given graph $G$, its $\Gt$-closure is not uniquely determined.

\ms

The following two theorems give basic properties of the $\Gt$-closure
operation, proved in \cite{KRSV???-I} (for the graphs $W_5$, $W_4$, $P_6^2$ 
and $P_6^{2+}$, see Fig.~\ref{fig-kola_a_cesty}).

%
%
\begin{figure}[ht]
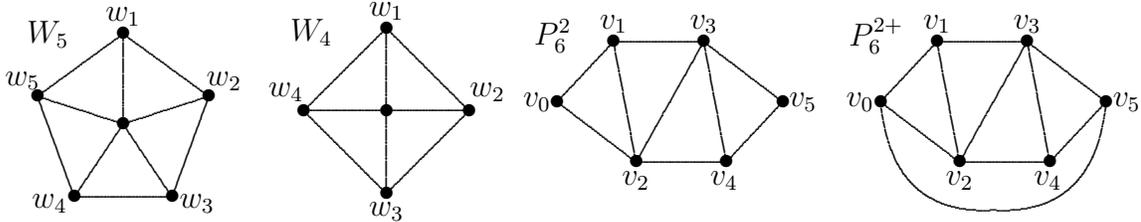

$$\bp
\setcoordinatesystem units <1mm,1mm>
\setplotarea x from -50 to 50, y from -7 to 7
\put{\beginpicture
\setcoordinatesystem units <.1mm,.1mm>
\setplotarea x from -120 to 120, y from -120 to 120
\put{$\bullet$} at     0   0
\put{$\bullet$} at     0 120
\put{$\bullet$} at   114  37
\put{$\bullet$} at    65 -97
\put{$\bullet$} at   -65 -97
\put{$\bullet$} at  -114 37
\put{$w_1$} at     0  143
\put{$w_2$} at   134   57
\put{$w_3$} at    98 -105
\put{$w_4$} at   -98 -105
\put{$w_5$} at  -134   58
\plot  0 120   114 37  65 -97  -65 -97  -114 37   0 120 /
\plot  0 0    0 120 /
\plot  0 0  114  37 /
\plot  0 0   65 -97 /
\plot  0 0  -65 -97 /
\plot  0 0 -114  37 /
\put{$W_5$} at  -100 120
\endpicture} at -65  0
\put{\beginpicture
\setcoordinatesystem units <.1mm,.1mm>
\setplotarea x from -120 to 120, y from -120 to 120
\put{$\bullet$} at     0    0
\put{$\bullet$} at     0  110
\put{$\bullet$} at   110    0
\put{$\bullet$} at     0 -110
\put{$\bullet$} at  -110    0
\put{$w_1$} at     0  133
\put{$w_2$} at   134   25
\put{$w_3$} at     0 -133
\put{$w_4$} at  -134   25
\plot  0 110   110 0  0 -110  -110 0  0 110 /
\plot  0 0    0  110 /
\plot  0 0  110    0 /
\plot  0 0    0 -110 /
\plot  0 0 -110    0 /
\put{$W_4$} at  -100 105
\endpicture} at -30  0
\put{
\bp
\setcoordinatesystem units <0.3mm,.8mm>
\setplotarea x from -20 to 20, y from -15 to 10
\put{$\bullet$} at  -50   0
\put{$\bullet$} at  -25  10
\put{$\bullet$} at  -15 -10
\put{$\bullet$} at   15  10
\put{$\bullet$} at   25 -10
\put{$\bullet$} at   50   0
\plot -50 0 -25 10  15 10  50 0  25 -10 -15 -10  -50 0 /
\plot -25 10   -15 -10  15 10  25 -10 /
\put{$v_0$} at  -59   0
\put{$v_1$} at  -25  13
\put{$v_2$} at  -15 -13
\put{$v_3$} at   15  13
\put{$v_4$} at   24 -13
\put{$v_5$} at   59   0
\put{$P_6^{2}$} at -52 11
\ep} at  7 1
\put{
\bp
\setcoordinatesystem units <0.3mm,.8mm>
\setplotarea x from -20 to 20, y from -15 to 10
\put{$\bullet$} at  -50   0
\put{$\bullet$} at  -25  10
\put{$\bullet$} at  -15 -10
\put{$\bullet$} at   15  10
\put{$\bullet$} at   25 -10
\put{$\bullet$} at   50   0
\plot -50 0 -25 10  15 10  50 0  25 -10 -15 -10  -50 0 /
\plot -25 10   -15 -10  15 10  25 -10 /
\put{$v_0$} at  -59   0
\put{$v_1$} at  -25  13
\put{$v_2$} at  -15 -13
\put{$v_3$} at   15  13
\put{$v_4$} at   24 -13
\put{$v_5$} at   59   0
\setquadratic
\plot -50 0  -35 -14   0 -18   35 -14   50 0 /
\setlinear
\put{$P_6^{2+}$} at -52 11
\ep} at   50 1
\ep$$
\bsm
\caption{The 5-wheel $W_5$, the 4-wheel $W_4$, and the graphs $P_6^2$ and 
        $P_6^{2+}$}
\label{fig-kola_a_cesty}
\end{figure}
%
%

%
%
\begin{theoremAcite}{\cite{KRSV???-I}}
\label{thmA-closure-W4-W5-P6}
Let $G$ be a $\{\claw,\Gt\}$-free graph and let $G^{\Gt}$ be its $\Gt$-closure.
Then $G^{\Gt}$ is $\{\claw,W_5,W_4,P_6^2,P_6^{2+}\}$-free.
\end{theoremAcite}

To show that $G^{\Gt}$ is a line graph of a multigraph, by 
Theorem~\ref{thmA-BeMe}, it is sufficient to show that $G^{\Gt}$  does not 
contain as an induced subgraph any of the graphs $G_1,\ldots,G_7$ of 
Figure~\ref{fig-BeMe}.
Since $G_1\iso\claw$, $G_3\iso W_5$, each of the graphs 
$G_5,G_6,G_7$ contains an induced $W_4$, $G_2\iso P_6^2$, and 
$G_4\iso P_6^{2+}$, Theorem~\ref{thmA-closure-W4-W5-P6} immediately 
implies the following crucial fact.

%
%
\begin{theoremAcite}{\cite{KRSV???-I}}
\label{thmA-Gamma3-clos-lajngraf}
Let $G$ be a $\{\claw,\Gt\}$-free graph and let $G^{\Gt}$ be its 
$\Gt$-closure. Then there is a multigraph $H$ such that $G^{\Gt}=L(H)$.
\end{theoremAcite}

Further structural properties of a $\Gt$-closure of a graph and of its preimage
will be shown in Section~\ref{sec-proof-main} 
(Lemma~\ref{lemma-diamonds} and Lemma~\ref{lemma-minimality}).

\bs

The following results will be useful to identify feasible vertices.

%
%
\begin{theoremAcite}{\cite{RV10}}
\label{thmA-HC-2-conn_neighb}
Let $G$ be a claw-free graph and let $x\in V(G)$ be locally 2-connected
in $G$. Then $G$ is Hamilton-connected if and only if $\Gstx$ is 
Hamilton-connected.
\end{theoremAcite}

Thus, in our terminology, Theorem~\ref{thmA-HC-2-conn_neighb} says that 
a locally 2-connected vertex is feasible.

%
%
\begin{lemmaAcite}{\cite{RV11}}
\label{lemmaA-2_indep_sets}
Let $G$ be a claw-free graph, $x\in V(G)$, and let
$H\indsub \la N_G(x)\rag$ be a 2-connected graph containing two
disjoint pairs of independent vertices. Then $x$ is locally 2-connected in $G$.
\end{lemmaAcite}

Note that if a vertex $x\in V(G)$ is feasible by virtue of 
Theorem~\ref{thmA-HC-2-conn_neighb} (i.e., $x$ is locally 2-connected in $G$), 
then, for any $y\in V(G)$ $y\neq x$, $x$ is locally 2-connected also in $\Gst_y$, 
but $\la N_{\Gst_y}(x)\ra_{\Gst_y}$ can be complete (if $N_G(x)\subset N_G(y)$).
Thus, for any $y\in V(G)$, $x$ is feasible or simplicial in $\Gst_y$. 

We thus define more generally: a set $M\subset V(G)$ is {\em weakly feasible 
in $G$} if the vertices in $M$ can be ordered in a sequence $x_1,\ldots,x_k$ 
such that $x_1$ is feasible in $G_0=G$, and $x_{i+1}$ is feasible or simplicial 
in $G_i=(G_{i-1})^{^*}_{x_i}$, $i=1,\ldots,k-1$.
Thus, similarly, if $G$ is not Hamilton-connected and $M\subset V(G)$ is weakly 
feasible in $G$, then $\Gst_M$ is still not Hamilton-connected and all vertices 
of $M$ are simplicial in $\Gst_M$.

\section{Proof of Theorem~\ref{thm-main}}
\label{sec-proof-main}

In the proof, we will need the following three lemmas. Let $D$ be the diamond
(see Fig.~\ref{fig-diamonds}$(b)$), and $D^1$ and $D^2$ the diamond in which
one or two of the edges $ac_i$, $i=1,2$, are subdivided, respectively
(see Fig.~\ref{fig-diamonds}$(c)$,~$(d)$). We will use the labeling of the
vertices of $D$, $D^1$ and $D^2$ as in Fig.~\ref{fig-diamonds}.

%
%
\begin{lemma}
\label{lemma-diamonds}
Let $G$ be a 3-connected $\{\claw,\Gt\}$-free graph that is not 
Hamilton-connected, let $\bG$ be its $\Gt$-closure, and let $H=\Lp(\bG)$.
Then 
\begin{mathitem}
\item $H$ does not contain as a subgraph the diamond $D$,
\item for any triangle $T\subset H$, every vertex $x\in V(T)$ has a neighbor
      in $V(H)\sm V(T)$,
\item $H$ contains as a subgraph neither the graph $D^1$ such that 
      $N_H(d_1)=\{a,c_1\}$, nor the graph $D^2$ such that 
      $N_H(d_i)=\{a,c_i\}$, $i=1,2$.
\end{mathitem}
\end{lemma}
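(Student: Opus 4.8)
The plan is to exploit the key structural hypothesis available to us: $\bar G$ is a $\Gamma_3$-closure of $G$, so by the very definition of that closure, for every feasible set $M\subset V_{NS}(\bar G)$ the graph $\bar G^{^*}_M$ contains an induced $\Gamma_3$. Equivalently, via $H=\Lp(\bar G)$, no set of edges of $H$ lying in triangles or multiedges can be contracted without creating a $\Gamma_3$ in the line graph. Each of the three items is a ``forbidden local configuration'' statement, and I would prove each one by assuming the configuration is present in $H$ and then producing a feasible (or weakly feasible) set of vertices of $\bar G$ whose local completions do \emph{not} introduce a $\Gamma_3$ — contradicting $\Gamma_3$-closedness. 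The main engine for establishing feasibility will be Theorem~\ref{thmA-HC-2-conn_neighb} together with Lemma~\ref{lemmaA-2_indep_sets}: a vertex $x$ of $\bar G$ is feasible as soon as $\langle N_{\bar G}(x)\rangle$ is $2$-connected and contains two disjoint independent pairs; and when it is feasible by local $2$-connectedness it stays feasible-or-simplicial after any further local completion, so one can chain such completions into a weakly feasible set. Throughout I would translate freely between $H$ and $\bar G=L(H)$: a triangle $T\subset H$ at a vertex $v$ of degree~$\geq 3$ corresponds in $\bar G$ to a vertex whose neighbourhood contains a triangle plus further structure, and contracting an edge of $H$ corresponds to a local completion in $\bar G$.

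For \textbf{(i)}, suppose $H$ contains a diamond $D$ on $\{a,b,c_1,c_2\}$ with missing edge $c_1c_2$ (here $a,b$ the degree-$3$ vertices of $D$). The two triangles $ab c_1$ and $ab c_2$ share the edge $ab$; in $\bar G$ the vertex $x=L(ab)$ then has $c_1'=L(ac_1), c_2'=L(ac_2), c_1''=L(bc_1), c_2''=L(bc_2)$ as neighbours forming (essentially) a $C_4$ in $\langle N_{\bar G}(x)\rangle$, so $\langle N_{\bar G}(x)\rangle$ is $2$-connected with two disjoint independent pairs; hence $x$ is feasible, i.e. $ab$ is contractible. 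The point is to check that contracting $ab$ (equivalently performing the local completion at $x$) cannot create an induced $\Gamma_3$. Contracting $ab$ in $H$ merges the two triangles into one triangle on the image of $\{a,b\}$ together with $c_1,c_2$ (plus a pendant edge from the loop), and I would argue combinatorially that any $\Gamma_3$ in $L(H|_{ab})$ would have pulled back to a $\Gamma_3$, a near-$\Gamma_3$, or a claw already present in $\bar G$ — contradicting that $\bar G$ is $\{\claw,\Gamma_3\}$-free and $\Gamma_3$-closed. This ``pull-back'' bookkeeping, tracking how the two triangles $p_1$–$p_{i+1}$ pattern of $\Gamma_3$ can or cannot thread through a contracted edge, is the technical heart.

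For \textbf{(ii)}, suppose some triangle $T=xyz\subset H$ has a vertex, say $x$, with $N_H(x)=\{y,z\}$, i.e. $x$ has degree $2$ in $H$ (or all of $x$'s neighbours lie in $T$). By Theorem~\ref{thmA-vzor_jedn} there are no pendant multiedges and by $3$-connectedness degree-$2$ vertices are independent; Lemma~\ref{lemmaA-SMclos-degree2} already forbids a \emph{triangle with a degree-$2$ vertex} in the preimage of an SM-closed (hence UM-closed, hence $\Gamma_3$-closed) graph, so I would first try to invoke it directly, and otherwise reduce the general ``all neighbours inside $T$'' case to it: if $x$ has a third neighbour it must be $y$ or $z$ again, giving a multiedge inside the triangle, hence a multitriangle, which one eliminates by contracting the simple edge of $T$ (its endpoints in $\bar G$ are locally $2$-connected) and checking no $\Gamma_3$ appears. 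The obstacle here is again the no-new-$\Gamma_3$ verification, but it is lighter than in (i) because the configuration is smaller.

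For \textbf{(iii)}, the two forbidden subdivided-diamond configurations are the ``one step away'' analogues of (i) with the subdividing vertices $d_i$ having degree exactly $2$ and $N_H(d_i)=\{a,c_i\}$. Here the natural move is: suppress each $d_i$ (this is allowed because $d_i$ has degree $2$ and does not sit in a triangle, so suppression does not change $\bar G=L(H)$ up to the relevant line-graph identifications — more carefully, one passes to a multigraph realizing the same line graph or argues at the level of $\co(H)$), thereby turning $D^1$ or $D^2$ into an honest diamond $D$ in the resulting preimage, and then quote part (i). I would need to be careful that the suppression is legitimate in the sense that the resulting multigraph still has $L(\cdot)$ equal to a graph for which (i) applies — this is where I expect the argument to be most delicate, since $d_1$ and $d_2$ are genuine vertices of $H$ but the diamond they subdivide may not itself be induced in $H$; the clean way is to work with $\co(H)$ (using Theorem~\ref{thmA-core} and the structural description in Theorem~\ref{thmA-ultim-clos}$(vi)$, which already says $\co(H)$ is diamond-free) and pull the contradiction back to $H$. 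In all three parts, the single recurring obstacle is the same: verifying that the proposed contraction/local completion does not create an induced $\Gamma_3$, and I would isolate that into a short lemma about how an induced $\Gamma_3$ in $L(H|_e)$ forces a forbidden induced subgraph ($\claw$ or $\Gamma_3$ or one of $W_4,W_5,P_6^2,P_6^{2+}$, all excluded by Theorem~\ref{thmA-closure-W4-W5-P6}) in $\bar G$ — reusing it for (i), (ii) and (iii).
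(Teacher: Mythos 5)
Your overall strategy---use the $\Gamma_3$-closure definition to know that any genuine feasible contraction must create a $\Gamma_3$, then pull the created $\Gamma_3$ back to a $\Gamma_3$ in $H$---is indeed the engine used in parts~$(ii)$ and~$(iii)$ of the paper's proof. But there are three concrete problems with the proposal as written.

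First, for $(i)$ you are overcomplicating: there is no need to discuss contractibility of $ab$ at all. A diamond $D\subset H$ gives $L(D)\iso W_4$ as an induced subgraph of $\bG$, and Theorem~\ref{thmA-closure-W4-W5-P6} already says $\bG$ is $W_4$-free. That is the whole proof, with no bookkeeping about what a contraction might create.

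Second, and more seriously, your logical chain ``SM-closed (hence UM-closed, hence $\Gamma_3$-closed)'' in $(ii)$ is reversed. The correct direction (stated in the paper, after Lemma~\ref{lemmaA-SMclos-degree2}) is UM-closed $\Rightarrow$ SM-closed. A $\Gamma_3$-closure is in general \emph{weaker} than a UM-closure: the $\Gamma_3$-closure process may be forced to stop early precisely because the next local completion would introduce a $\Gamma_3$. So a $\Gamma_3$-closed graph need \emph{not} be UM-closed or SM-closed, and you cannot invoke Lemma~\ref{lemmaA-SMclos-degree2} directly to $H=\Lp(\bG)$. What the paper actually does is the contrapositive: the bad configuration (triangle with degree-$2$ vertex) contradicts UM-closedness, so $\bG$ is \emph{not} UM-closed, hence some local completion is feasible; by $\Gamma_3$-closedness that completion must create a $\Gamma_3$, and one shows that $\Gamma_3$ was already in $H$. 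Your ``first try to invoke it directly'' step fails outright, and the fallback is left too vague to carry the argument.

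Third, for $(iii)$, suppressing $d_1$ (and $d_2$) changes $H$ and hence $L(H)$; you cannot simply reduce to $(i)$. Your proposed escape---``work with $\co(H)$ using Theorem~\ref{thmA-core} and the structural description in Theorem~\ref{thmA-ultim-clos}$(vi)$, which already says $\co(H)$ is diamond-free''---fails for the same reason as above: Theorem~\ref{thmA-ultim-clos}$(vi)$ applies only to UM-closed graphs, and $\bG$ is not known to be UM-closed. In fact the paper \emph{uses} the presence of a diamond in $\co(H)$ precisely to conclude that $\bG$ is not UM-closed, and then runs an extensive case analysis (tracking which edges are multiple, which edges are contractible, and which vertex of the contracted $\Lp(\Gamma_3)$ is ``new'') to pull the $\Gamma_3$ back into $H$. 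Your outline gives no substitute for that analysis, and the ``suppress the degree-$2$ vertices'' shortcut is not sound.
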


%
%
\begin{figure}[ht]
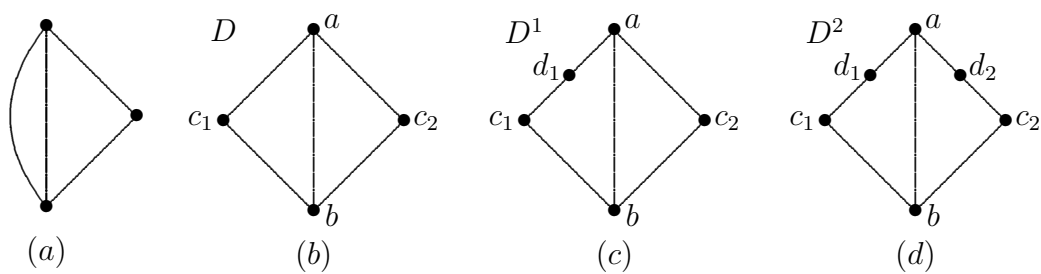

$$\bp
\setcoordinatesystem units <1.0mm,1mm>
\setplotarea x from -50 to 50, y from -7 to 7
\put{\beginpicture
\setcoordinatesystem units <1.2mm,1.2mm>
\setplotarea x from -10 to 10, y from -10 to 10
\put{$\bullet$} at    0   10
\put{$\bullet$} at    0  -10
\put{$\bullet$} at   10    0
\plot 0 10  0 -10  10 0  0 10  0 -10 /
\setquadratic
\plot 0 -10  -4 0  0 10 /
\setlinear
\put{$(a)$} at  0 -15
\endpicture} at  -60   0
\put{\beginpicture
\setcoordinatesystem units <1.2mm,1.2mm>
\setplotarea x from -10 to 10, y from -10 to 10
\put{$\bullet$} at    0   10
\put{$\bullet$} at    0  -10
\put{$\bullet$} at   10    0
\put{$\bullet$} at  -10    0
\put{$a$} at      2   11
\put{$b$} at      2  -10.5
\put{$c_1$} at  -12.5    0
\put{$c_2$} at   12.5    0
\plot 0 10  0 -10  10 0  0 10  -10 0  0 -10 /
\put{$D$} at  -10  10
\put{$(b)$} at  0 -15
\endpicture} at  -25   0
\put{\beginpicture
\setcoordinatesystem units <1.2mm,1.2mm>
\setplotarea x from -10 to 10, y from -10 to 10
\put{$\bullet$} at    0   10
\put{$\bullet$} at    0  -10
\put{$\bullet$} at   10    0
\put{$\bullet$} at  -10    0
\put{$\bullet$} at   -5    5
\put{$a$} at      2   11
\put{$b$} at      2  -10.5
\put{$c_1$} at  -12.5    0
\put{$c_2$} at   12.5    0
\put{$d_1$} at   -7.5    6
\plot 0 10  0 -10  10 0  0 10  -10 0  0 -10 /
\put{$D^1$} at  -10  10
\put{$(c)$} at  0 -15
\endpicture} at   15   0
\put{\beginpicture
\setcoordinatesystem units <1.2mm,1.2mm>
\setplotarea x from -10 to 10, y from -10 to 10
\put{$\bullet$} at    0   10
\put{$\bullet$} at    0  -10
\put{$\bullet$} at   10    0
\put{$\bullet$} at  -10    0
\put{$\bullet$} at   -5    5
\put{$\bullet$} at    5    5
\put{$a$} at      2   11
\put{$b$} at      2  -10.5
\put{$c_1$} at  -12.5    0
\put{$c_2$} at   12.5    0
\put{$d_1$} at   -7.5    6
\put{$d_2$} at    7.5    6
\plot 0 10  0 -10  10 0  0 10  -10 0  0 -10 /
\put{$D^2$} at  -10  10
\put{$(d)$} at  0 -15
\endpicture} at   55   0
\ep$$
\bsm
\caption{The multitriangle, the diamond $D$, and the graphs $D^1$ and $D^2$.}
\label{fig-diamonds}
\end{figure}

\begin{proof}
$(i)$ If $H$ contains a subgraph $F\iso D$, then, since $L(D)=W_4$, $\bG$
is not $W_4$-free, contradicting Proposition~\ref{thmA-closure-W4-W5-P6}.

\ms

$(ii)$ Let $T=u_1u_2u_3$ be a triangle in $H$, denote $e_1=u_1u_2$, 
$e_2=u_2u_3$, $e_3=u_3u_1$, $v_i=L(e_i)$, $i=1,2,3$, and suppose, to the 
contrary, that $N_H(u_1)=\{u_2,u_3\}$.
By Theorem~\ref{thmA-ultim-clos}$(vi)$ and Lemma~\ref{lemmaA-SMclos-degree2},
$\bG$ is not UM-closed, hence some of the vertices $v_1,v_2,v_3$ is feasible
in $\bG$, implying that some of the edges $e_1,e_2,e_3$ is contractible in $H$.
By the definition of the $\Gt$-closure, some $\bG^{^*}_{v_i}$, $i\in\{1,2,3\}$,
contains an induced $\Gt$, i.e., the corresponding subgraph $H|_{e_i}$ contains
an $\Lp(\Gt)$. We will use the labeling of the vertices of $\Gt$ and $\Lp(\Gt)$
as in Fig.~\ref{fig-Preimages}.

By symmetry, it is sufficient to consider the cases when $v_1$ or $v_2$
is feasible.

\begin{mylist}

  \item[\underline{\bf Case 1:}] {\sl $v_1$ is feasible in $\bG$.} \quad \\
  Then the edge $e_1=u_1u_2$ is contractible in $H$, and $H|^{e_1}$ contains a 
  subgraph $F$ such that $L(F)=\Gt$, i.e., $F$ is one of the three graphs in 
  Fig.~\ref{fig-Preimages}.
  
  Let $u_{12}\in V(H|_{e_1})$ be the vertex obtained by identifying $u_1$ and 
  $u_2$. Clearly, $u_{12}\in V(F)$ (otherwise also $F\subset H$), and at least
  one of the edges $u_{12}u_3$ is in $E(F)$ (since $N_H(u_1)=\{u_2,u_3\}$). 
  If $u_{12}u_3=q_iq_{i+1}$ for some $i\in\{1,2,3,4\}$, then also $F\subset H$,
  a contradiction. Hence, up to a symmetry, $u_{12}u_3=s_{12}q_1$.
  Then, replacing in $F$ the edges $(u_{12}u_3)^1,(u_{12}u_3)^2$ by
  $u_1u_3,u_2u_3$ (if $q_1=u_3$), or by $u_1u_2,u_1u_3$ (if $q_1=u_{12}$), 
  we have an $\Lp(\Gt)$ in $H$, a contradiction.
  
  \item[\underline{\bf Case 2:}] {\sl $v_2$ is feasible in $\bG$.} \quad \\
  Then the edge $e_2=u_2u_3$ is contractible in $H$ and $H|_{e_2}$ contains 
  a subgraph $F\iso\Lp(\Gt)$.
  Since $N_{H|_{e_1}}(u_1)=\{u_{23}\}$, by symmetry, $u_{23}\in\{q_1,q_2,q_3\}$.
  
  If $u_{23}=q_1$, then it is straightforward to obtain an $\Lp(\Gt)$ in $H$.
  
  Let $u_{23}=q_2$. Then, replacing in $F$ the edges $q_1q_2$ and $q_1s_1$,
  $q_1s_2$ (or $(q_1s_{12})^1,(q_1s_{12})^2$) by the edges $u_2u_3$, $u_2u_1$, 
  $u_2q_1$ (if $u_3=q_2$), or $u_3u_2$, $u_3u_1$, $u_3q_1$ (if $u_2=q_2$), 
  we have an $\Lp(\Gt)$ in $H$.
  
  Finally, let $u_{23}=q_3$. By symmetry, choose the notation such that, in $H$, 
  $q_2u_2\in E(H)$ and  $q_4u_3\in E(H)$. Since $N_H(u_1)=\{u_2,u_3\}$ and 
  $\{q_2u_2,u_3q_4\}$ cannot be an essential edge-cut in $H$, $u_2$ or $u_3$
  has a neighbor $w\in V(H)\sm (V(F)\cup\{u_1,u_2,u_3\})$. By symmetry, let
  $u_3w\in E(H)$. Then the edges $(s_{12}q_1)^1,(s_{12}q_1)^2$ (or 
  $s_1q_1,s_2q_1$) and $q_1q_2$, $q_2u_2$, $u_2u_1$, $u_1u_3$, $u_3q_4$, $u_3w$
  determine an $\Lp(\Gt)$ in $H$, a contradiction.
\end{mylist}

\bs

$(iii)$ Suppose that $H$ contains a subgraph $F\in\{D^1,D^2\}$ such that 
$N_H(d_1)=\{a,c_1\}$, and if $F=D^2$, then also $N_H(d_2)=\{a,c_2\}$.

\ms 

We first show that, in each of the cases, the subgraph $F$ is contractible to a 
double edge with some pendant edges (i.e., the subgraph $F\subset H$ contains 
some contractible subgraphs such that their contraction turns $F$ into a double 
edge plus some pendant edges). 
This will imply that $L(H)$ is not UM-closed, implying that $L(\tH)$ (where 
$\tH$ is the multigraph obtained from $H$ by the contractions) is still not 
Hamilton-connected. Consequently, $L(\tH)$ must contain an induced $\Gamma_3$,
and we show that this is not possible.

\begin{mylist}

 \item[\underline{\bf Case 1:}] {\sl the edges $ad_1,c_1d_1$, and $ad_2,c_2d_2$
       if $F=D^2$, are simple.} \quad \\
  Then $\co(H)$ contains a diamond, thus, by Theorem~\ref{thmA-ultim-clos}$(vi)$, 
  $L(H)$ is not UM-closed, implying that some edge $f\in E(F)$ is contractible.
   
  \begin{mylist}
   \item[\underline{\bf Subcase 1.1:}] {\sl $F=D^1$.} \quad \\
    First, if $f\in\{ad_1,c_1d_1,c_1b\}$, then $F_1=F|_f$ is a diamond, hence
    $L(ab)$ has a 2-connected neighborhood in $H_1=H|_f$, implying that $ab$ is 
    contractible in $H_1$. Then, in $H_2=H_1|_{ab}$, $F$ contracts to two double
    edges. By Theorem~\ref{thmA-ultim-clos}$(vi)(\beta)$, one of the double edges 
    is contractible, and its contraction yields a double edge (plus some pendant 
    edges). 
    
    Secondly, if $f=ab$, then $F|_f$ is a triangle with a vertex of degree~2
    plus a double edge. By Theorem~\ref{thmA-ultim-clos}, some of the edges of 
    the triangle is contractible and the contraction yields a double edge (plus 
    some pendant edges).
    
    Finally, if $f\in\{ac_2,bc_2\}$, then $F_1=F|_f$ is the graph with edges
    $ad_1,c_1d_1,c_1b$ and a double edge $ab$, and since $d_1$ is of degree~2, 
    $F_1$ corresponds to a multitriangle in $\co(H|_f)$. Thus, by 
    Theorem~\ref{thmA-ultim-clos}$(vi)$, some edge $f_1\in E(F_1)$ is 
    contractible in $H_1=H|_f$. 
    If $f_1\in\{ad_1,c_1d_1,c_1b\}$, then $F_2=F_1|_{f_1}$ is a multitriangle,
    $ab$ is contractible in $H_2=H_1|{f_1}$ (since it has 2-connected 
    neighborhood), and $F_2|_{ab}$ yields a double edge. 
    If $f_1=ab$, then $F_2=F_1|_{ab}$ is a triangle with a vertex of degree~2, 
    and a contraction of any of its edges (by Theorem~\ref{thmA-HC-2-conn_neighb})
    yields again a double edge (plus pendant edges).

   \item[\underline{\bf Subcase 1.2:}] {\sl $F=D^2$.} \quad \\
    If $f\in E(F)\sm\{ab\}$, then $F_1=F|_f\iso D^1$ and we are in some of the 
    previous cases. Thus, let $f=ab$. Then $F_1=F|_f$ consists of two triangles,
    each with a vertex of degree~2, which by Theorem~\ref{thmA-ultim-clos} and 
    Lemma~\ref{lemmaA-SMclos-degree2} yields again a double edge.
    
 \end{mylist}

 \item[\underline{\bf Case 2:}] {\sl some of the edges $ad_1,c_1d_1$, and 
      $ad_2,c_2d_2$ if $F=D^2$, is multiple in $H$.} \quad \\
  In this case, we have the following fact.

 %
 %
 \begin{claimbc}
 \label{claim-lemma-diamond}
 Each of the edges $ad_1,c_1d_1$, and $ad_2,c_2d_2$ if $F=D^2$, that is 
 multiple in $H$, is contractible.
 \end{claimbc}

\bsm

 \begin{proofcl} 
 Suppose that some $f\in\{ad_1,c_1d_1\}$ is multiple but not contractible.

 First observe that the edge $ab$ is not contractible, since otherwise,
 in $F|_{ab}$, the edges $ad_1,c_1d_1$ and $c_1b$ create a multitriangle 
 in which $f$ is in a multiple edge, hence $L(f)$ has 2-connected neighborhood 
 and $f$ is contractible, a contradiction. Thus, $ab$ is not contractible, 
 specifically, $ab$ is simple.
 
 If both $ad_1$ and $c_1d_1$ are multiple, then one of them is contractible 
 (by Theorem~\ref{thmA-ultim-clos}$(vi)$), and its contraction yields again a 
 multitriangle with $f$ in multiple edge, implying contractibility of $f$, a 
 contradiction. Thus, exactly one of the edges $ad_1$, $c_1d_1$ is multiple.

 Now, if $F=D^1$, then $abc_2$ is a triangle, and since $f$ is a 
 noncontractible multiedge, some of the edges $ac_2$, $bc_2$ is contractible,
 implying contractibility of $ab$, a contradiction. Thus, $F=D^2$. 

 If none of the edges $ad_2,c_2d_2$ is multiple, then $abc_2$ is a triangle
 in $\co(H)$. Since $f$ is a noncontractible multiedge, some of the edges 
 $ad_2,c_2d_2$ is contractible, implying again contractibility of $ab$,
 a contradiction. Thus, some edge $f_1\in\{ad_2,c_2d_2\}$ is multiple. 
 But then again, by Theorem~\ref{thmA-ultim-clos}$(vi)$, $f_1$ is 
 contractible, implying, as before, contractibility of $ab$, a contradiction.
 \end{proofcl} 

\vspace{-5mm}

 We summarize that some edges of $F$ are multiple, and each multiple edge 
 of $F$ is contractible. But now, contracting some of the multiple edges 
 of $F$, in each of the cases, we are in some of the previous cases.
 Thus, we conclude that $F$ can be contracted to a double edge plus some 
 pendant edges, and for the resulting multigraph $\tH$, $L(\tH)$ is still
 not Hamilton-connected. 
\end{mylist}

\bs 

By the definition of the $\Gt$-closure, $\tH$ contains a subgraph 
$\tF\iso\Lp(\Gt)$. 
Moreover, observe that, in each of the cases, the contraction of $F$ to a double
edge contracts some of the two ``subdivided triangles" of $F$ to a vertex 
(plus some pendant edges).
More specifically, either the subgraph $F^1$ with edges $ad_1,d_1c_1,c_1b$ and 
$ab$, or the subgraph $F^2$ with edges $ad_2,d_2c_2$ (or $ac_2$ if $F=D^1$), 
$c_2b$ and $ab$, is contracted to a vertex plus pendant edges, and this turns $F$
into a double edge (plus some pendant edges).
Thus, at most one of the edges of $F$ is an edge of $\tF$.
Denote the vertices of $\tF$ as in Fig.~\ref{fig-Preimages}.

We will consider the vertex of $\tH$ resulting from the contraction 
of $F^1$ or $F^2$ as a 
``new" vertex, and will distinguish cases according to which of the vertices 
of $\tF$ is new, and, subject to this, which of the edges $q_iq_{i+1}$,
$i=1,2,3,4$, is an edge of $F$. 
To reach a contradiction, in each of the cases, we will list edges of an 
$\Lp(\Gt)$ in $H$.
We will also list which part of $F$ (i.e., $F^1$ or $F^2$) corresponds
to the new vertex, and its vertices that are used in the $\Lp(\Gt)$ in $H$.
The cases and subcases are distinguished up to a symmetry, and, in symmetric 
situations, we will always list the possibility that $F^1$ is contracted
(note that if $F=D^1$, it is possible that the $\Lp(\Gt)$ in $H$ uses the edge
$ad_1$ or $c_1d_1$ when $F^1$ is contracted, while it uses the edge $ab$ 
if $F^2$ is contracted, and we consider these situations also symmetric).
In all cases, when the edges $q_1s_1$ and $q_1s_2$, or $q_5s_3$ and $q_5s_4$
are used, it is always implicitly understood that there is also a possibility 
of a double edge $q_1s_{12}$ or $q_5s_{34}$.

\begin{mylist}

 \item[\underline{\bf Case 1:}] {\sl $q_1$ is new.} \quad \\
 Then possibly $E(F)\cap E(\tF)=\emptyset$ or $q_1q_2\in E(F)$, and (up to a 
 symmetry) $q_1\in\{c_1,b\}$
 if $q_1q_2\notin E(F)$, and $q_1=b$ if $q_1q_2\in E(F)$. We thus have the 
 following possibilities.

 \begin{tabular}{|c|l|l|}
 \hline 
 \rule{0pt}{3ex}
 $E(F)\cap E(\tF)$ & ~~~$q_1$ & edges of an $\Lp(\Gt)$ in $H$ \\
 \hline
 $\emptyset$ & $F^1$; $c_1$ & $c_1d_1,c_1b,c_1q_2,q_2q_3,q_3q_4,q_4q_5,q_5s_3,q_5s_4$ \\
 $\emptyset$ & $F^1$; $b$   & $bc_1,ba,bq_2,q_2q_3,q_3q_4,q_4q_5,q_5s_3,q_5s_4$  \\
 $q_1q_2=bc_2$ & $F^1$; $b$ & $bc_1,ba,bc_2,c_2q_3,q_3q_4,q_4q_5,q_5s_3,q_5s_4$  \\
 \hline
 \end{tabular}

 \item[\underline{\bf Case 2:}] {\sl $q_2$ is new.} \quad \\
 In this case, possibly  $E(F)\cap E(\tF)=\emptyset$, $q_1q_2\in E(F)$, or 
 $q_2q_3\in E(F)$, and we have the following possibilities.
  
 \begin{tabular}{|c|l|l|}
 \hline
 \rule{0pt}{2.5ex}
 $E(F)\cap E(\tF)$ & ~~~~$q_2$ & edges of an $\Lp(\Gt)$ in $H$ \\
 \hline
 $\emptyset$ & $F^1$; $a,b$ & $bc_1,bq_1,ba,aq_3,q_3q_4,q_4q_5,q_5s_3,q_5s_4$ \\
 $q_1q_2=bc_2$ & $F^1$; $b,c_1$ & $bc_2,ba,bc_1,c_1q_3,q_3q_4,q_4q_5,q_5s_3,q_5s_4$ \\
 $q_1q_2=bc_2$ & $F^1$; $b$ & $ad_1,ad_2(ac_2),ab,bq_3,q_3q_4,q_4q_5,q_5s_3,q_5s_4$ \\
 $q_2q_3=bc_2$ & $F^1$; $b,c_1$ & 
                    $c_1d_1,c_1q_1,c_1b,bc_2,c_2q_4,q_4q_5,q_5s_3,q_5s_4$ \\
 $q_2q_3=bc_1$ & $F^2$; $a,b$ & $aq_1,ab,ad_1,d_1c_1,c_1q_4,q_4q_5,q_5s_3,q_5s_4$ \\
 \hline
 \end{tabular}

 \item[\underline{\bf Case 3:}] {\sl $q_3$ is new.} \quad \\
 In this case, possibly  $E(F)\cap E(\tF)=\emptyset$, $q_2q_3\in E(F)$, or 
 $q_3q_4\in E(F)$; however, the last two possibilities are symmetric and we 
 therefore  consider only the first of them.

 \begin{mylist}
  \item[\underline{\bf Subcase 3.1:}] {\sl $q_2q_3\in E(F)$.} \quad \\
  Then we have the following possibilities.
  
  \begin{tabular}{|c|l|l|}
  \hline
 \rule{0pt}{2.5ex}
  $E(F)\cap E(\tF)$ & ~~~~$q_2$ & edges of an $\Lp(\Gt)$ in $H$ \\
  \hline
  $q_2q_3=ac_2$ & 
      $F^1$; $a,c_1$ &$c_2q_1,c_2d_2(c_2b),c_2a,ac_1,c_1q_4,q_4q_5,q_5s_3,q_5s_4$ \\
  $q_2q_3=c_1b$ & $F^2$; $a,b$ &$c_1q_1,c_1b,c_1d_1,d_1a,aq_4,q_4q_5,q_5s_3,q_5s_4$ \\
  \hline
  \end{tabular}

  \ms
  
  \item[\underline{\bf Subcase 3.2:}] {\sl $E(F)\cap E(\tF)=\emptyset$.} \quad \\
  Since one of $F^1,F^2$ is contracted and $d_1,d_2$ have no neighbors outside 
  $F$, the vertices $q_2$ and $q_4$ are adjacent in $H$ either to $a$ and $b$, 
  or to one of $a,b$ and one of $c_1,c_2$. So, up to a symmetry, either 
  $q_2a,q_4b\in E(H)$, or $N_H(q_2)\cap\{c_1,c_2\}\neq\emptyset$ and 
  $N_H(q_4)\cap\{a,b\}\neq\emptyset$.

  \begin{mylist}

   \item[\underline{\bf Subcase 3.2.1:}] {\sl $q_2a,q_4b\in E(H)$.} \quad \\
   If $F=D^1$, then the edges $q_1s_1,q_1s_2,q_1q_2,q_2a,ac_2,c_2b,bq_4,bc_1$ 
   determine an $\Lp(\Gt)$ in~$H$, a contradiction. Hence $F=D^2$.
   
   If $bc_1$ is a double edge in $H$, then the edges 
   $q_1s_1,q_1s_2,q_1q_2,q_2a,ad_1,d_1c_1,(c_1b)^1,(c_1b)^2$, 
   and if $ad_1$ is double in $H$, then the edges 
   $(ad_1)^1,(ad_1)^2,d_1c_1,c_1b,bq_4,q_4q_5,q_5s_3,q_5s_4$
   determine an $\Lp(\Gt)$ in $H$. Thus, $\mu_H(ad_1)=\mu_H(bc_1)=1$.
   Since $\{ad_1,bc_1\}$ cannot be an essential edge-cut, $c_1$ has another 
   neighbor $z\in V(H)$. We show that $z$ cannot be any of the vertices 
   $s_1,s_2$ (or $s_{12}$), $q_1,q_2,a$.
  
   \begin{tabular}{|c|l|}
   \hline
   \rule{0pt}{2.5ex}
   $z$ & edges of an $\Lp(\Gt)$ in $H$ \\
   \hline
   $s_1$ & $q_1s_2,q_1q_2,q_1s_1,s_1c_1,c_1b,ba,ad_1,ad_2$ \\
   $s_{12}$ & 
        $(s_{12}q_1)^1,(s_{12}q_1)^2,s_{12}c_1,c_1b,bq_4,q_4q_5,q_5s_3,q_5s_4$ \\
   $ q_1$ & $q_1s_1,q_1s_2,q_1c_1,c_1b,bq_4,q_4q_5,q_5s_3,q_5s_4$ \\
   $ q_2$ & $q_2q_1,q_2a,q_2c_1,c_1b,bq_4,q_4q_5,q_5s_3,q_5s_4$ \\
   $ a$   & $aq_2,ad_1,ac_1,c_1b,bq_4,q_4q_5,q_5s_3,q_5s_4$ \\
  \hline
  \end{tabular}

  But then the edges $q_1s_1,q_1s_2,q_1q_2,q_2a,ad_1,d_1c_1,c_1z,c_1b$
  determine an $\Lp(\Gt)$ in $H$, a contradiction.

  \item[\underline{\bf Subcase 3.2.2:}] {\sl $N_H(q_2)\cap\{c_1,c_2\}\neq\emptyset$ 
  and $N_H(q_4)\cap\{a,b\}\neq\emptyset$.} \quad \\
  Let $\bq_2\in\{c_1,c_2\}$ and $\bq_4\in\{a,b\}$ denote the neighbor of $q_2$
  or $q_4$ in $F$, respectively. 
   
  Suppose first that $\bq_2=c_1$. Then either $\bq_4=a$, or $\bq_4=b$.
   
  \begin{tabular}{|c|l|}
  \hline
  \rule{0pt}{2.5ex}
  $\bq_4$ & edges of an $\Lp(\Gt)$ in $H$ \\
  \hline
  $a$ & $q_1s_1,q_1s_2,q_1q_2,q_2c_1,c_1d_1,d_1a,ab,aq_4$ \\
  $b$ & $q_1s_1,q_1s_2,q_1q_2,q_2c_1,c_1d_1,d_1a,ab,ad_2(ac_2)$ \\
  \hline
  \end{tabular}   

  We get a symmetric contradiction if $\bq_2=c_2$ and $F=D^2$. Thus, we have 
  $\bq_2=c_2$ and $F=D^1$ (and either $\bq_4=a$, or $\bq_4=b$).
   
  We now show that none of the edges $ad_1,bc_1$ can be a double edge.
  Suppose the opposite. Then we have the following possibilities.
      
  \begin{tabular}{|c|l|}
  \hline
  \rule{0pt}{2.5ex}
  double edge & edges of an $\Lp(\Gt)$ in $H$ \\
  \hline
  $ad_1$ & $q_1s_1,q_1s_2,q_1q_2,q_2c_2,c_2b,ba,(ad_1)^1,(ad_1)^2$ \\
  $bc_1$ & $q_1s_1,q_1s_2,q_1q_2,q_2c_2,c_2a,ab,(bc_1)^1,(bc_1)^2$ \\
  \hline
  \end{tabular}     
   
  Thus, $\mu_H(ad_1)=\mu_H(bc_1)=1$. Since $\{ad_1,bc_1\}$ cannot be an 
  essential edge-cut, $c_1$ must have another neighbor $z\in V(H)$. 
  Clearly $z\neq c_2$ (since then we would have a diamond in $H$). 
  We show that $z$ cannot be any of the vertices $s_1,s_2$ (or $s_{12}$), 
  $q_1,q_2$.
  
   \begin{tabular}{|c|l|}
   \hline
   \rule{0pt}{2.5ex}
   $z$ & edges of an $\Lp(\Gt)$ in $H$ \\
   \hline
   $s_1$ & $q_1s_2,q_1q_2,q_1s_1,s_1c_1,c_1b,ba,ad_1,ac_2$ \\
   $s_{12}$ & 
        $(s_{12}q_1)^1,(s_{12}q_1)^2,s_{12}c_1,c_1d_1,d_1a,ac_2,c_2q_2,c_2b$ \\
   $ q_1$ & $q_1s_1,q_1s_2,q_1c_1,c_1d_1,d_1a,ac_2,c_2q_2,c_2b$ \\
   $ q_2$ & $q_1s_1,q_1s_2,q_1q_2,q_2c_1,c_1d_1,d_1a,ac_2,ab$ \\
  \hline
  \end{tabular}

  But then the edges $q_1s_1,q_1s_2,q_1q_2,q_2c_2,c_2b,bc_1,c_1d_1,c_1z$ 
  determine an $\Lp(\Gt)$ in $H$, a contradiction.
  \end{mylist}
 \end{mylist}
\end{mylist}

\bsm\bsm
\end{proof}

\bs

The next lemma will describe some structural properties of a minimal 
counterexample to Theorem~\ref{thm-main}.
Here, we say that a graph is {\em minimal} with respect to a property $\cal{P}$, 
if $G$ has $\cal{P}$, but for every vertex $x\in V(G)$, $G-x$ does not have 
$\cal{P}$. 

For an integer $r\geq 2$, $\dvK_{2,r}^M$ will denote the family of multigraphs 
that can be obtained from the complete bipartite graph 
$K_{2,r}=(\{v_1,v_2\},\{w_1,w_2,\ldots,w_r\})$ by replacing at least one 
of the edges $w_iv_1,w_iv_2$ with a double edge
(for an example, see Fig.~\ref{fig-multidiamonds}$(b)$).
Similarly, $\dvK_{rP_4}^M$ will denote the family of multigraphs obtained by 
identifying endvertices of $r$ vertex-disjoint paths $P_4^i=v_1w_iz_iv_2$, 
$i=1,2,\ldots,r$, and, for each $i=1,2,\ldots,r$, by replacing at least 
one of the edges of the $P_4^i$ with a double edge (for an example, see 
Fig.~\ref{fig-multidiamonds}$(c)$).

%
%
\begin{figure}[ht]
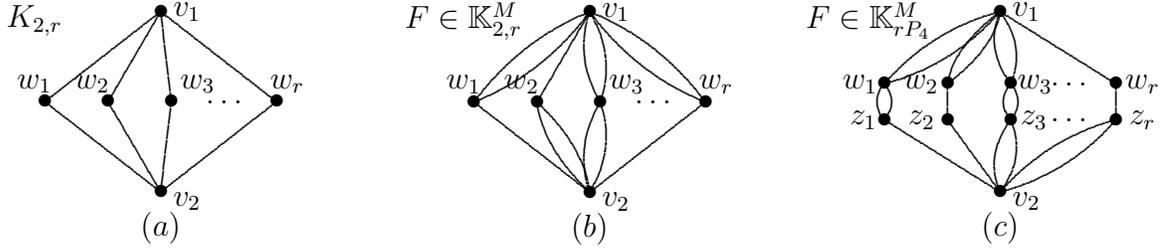

$$\bp
\setcoordinatesystem units <1.1mm,1mm>
\setplotarea x from -70 to 70, y from -7 to 7
\put{\beginpicture
\setcoordinatesystem units <1.4mm,1.2mm>
\setplotarea x from -10 to 10, y from -10 to 10
\put{$\bullet$} at    0  -10
\put{$\bullet$} at    0   10
\put{$\bullet$} at  -11    0
\put{$\bullet$} at   -5    0
\put{$\bullet$} at    1    0
\put{$\ldots$} at     6    0
\put{$\bullet$} at   11    0
%
%
\put{$v_1$} at     2.5   10
\put{$v_2$} at     2.5  -11
\put{$w_1$} at   -12      2
\put{$w_2$} at    -6.5    2
\put{$w_3$} at     3.5    2
\put{$w_r$} at    12      2
\plot  0 -10   -11 0   0 10  /
\plot  0 -10   -5 0   0 10  /
\plot  0 -10   1 0   0 10  /
\plot  0 -10   11 0   0 10  /
\put{$K_{2,r}$} at  -12  9
\put{$(a)$} at  0 -14
\endpicture} at   -50   0
\put{\beginpicture
\setcoordinatesystem units <1.4mm,1.2mm>
\setplotarea x from -10 to 10, y from -10 to 10
\put{$\bullet$} at    0  -10
\put{$\bullet$} at    0   10
\put{$\bullet$} at  -11    0
\put{$\bullet$} at   -5    0
\put{$\bullet$} at    1    0
\put{$\ldots$} at     6    0
\put{$\bullet$} at   11    0
%
%
\put{$v_1$} at     2.5   10
\put{$v_2$} at     2.5  -11
\put{$w_1$} at   -12      2
\put{$w_2$} at    -6.5    2
\put{$w_3$} at     3.5    2
\put{$w_r$} at    12      2
\plot  -11 0  0 -10   11 0  /
\plot   -5 0  0  10   /
%
\setquadratic
\plot   1 0   1.5  5  0  10  -0.5  5    1 0 /
\plot   1 0   1.5 -5  0 -10  -0.5 -5    1 0 /
\plot  11 0   5    4  0  10   6    6   11 0 /
\plot -11 0  -5    4  0  10  -6    6  -11 0 /
\plot  -5 0  -3.5 -5  0 -10  -1.5 -5   -5 0 /
\setlinear
\put{$F\in\dvK_{2,r}^M$} at  -12  9
\put{$(b)$} at  0 -14
\endpicture} at   0   0
\put{\beginpicture
\setcoordinatesystem units <1.4mm,1.2mm>
\setplotarea x from -10 to 10, y from -10 to 10
\put{$\bullet$} at    0  -10
\put{$\bullet$} at    0   10
\put{$\bullet$} at  -11    2
\put{$\bullet$} at   -5    2
\put{$\bullet$} at    1    2
\put{$\ldots$} at     6.5  2
\put{$\bullet$} at   11    2
\put{$\bullet$} at  -11   -2
\put{$\bullet$} at   -5   -2
\put{$\bullet$} at    1   -2
\put{$\ldots$} at     6.5 -2
\put{$\bullet$} at   11   -2
\put{$v_1$} at     2.5   10
\put{$v_2$} at     2.5  -11
\put{$w_1$} at   -13.3    2
\put{$w_2$} at    -7.5    2
\put{$w_3$} at     3.3    2
\put{$w_r$} at    13.5    2
\put{$z_1$} at   -13     -2
\put{$z_2$} at    -7.5   -2
\put{$z_3$} at     3.2   -2
\put{$z_r$} at    13.5   -2
\plot 0 -10 -11 -2 /
\plot 0 -10  -5 -2   -5  2 /
\plot  11 -2   11  2  0 10 /
\setquadratic
\plot   1  2   1.5  6  0  10  -0.5  6    1  2 /
\plot   1 -2   1.5 -6  0 -10  -0.5 -6    1 -2 /
\plot -11  2  -5    5  0  10  -6    7  -11  2 /
\plot  11 -2   5   -5  0 -10   6   -7   11 -2 /
\plot  -5 2  -3.5  6  0  10  -1.5  6   -5 2 /
\plot -11 -2  -11.7 0  -11 2  -10.3 0  -11 -2 /
\plot 1 -2  1.7 0  1 2  0.3 0  1 -2 /
\setlinear
\put{$F\in\dvK_{rP_4}^M$} at  -12.5  9
\put{$(c)$} at  0 -14
\endpicture} at   50   0
\ep$$
\bsm
\caption{The graph $K_{2,r}$, a multigraph from $\dvK_{2,r}^M$ and a multigraph 
         from $\dvK_{rP_4}^M$.}
\label{fig-multidiamonds}
\end{figure}
%

%
%
\begin{lemma}
\label{lemma-minimality}
Let $G$ be a minimal 3-connected $\{\claw,\Gt\}$-free non-Hamilton-connected
graph, let $\bG$ be its $\Gt$-closure, and let $H=\Lp(\bG)$.
Then 
\begin{mathitem}
\item every vertex $x\in V(H)$ is incident with at most two pendant
      edges,
\item every edge $e\in E(H)$ that is in a cycle of length at least three 
      has multiplicity $\mu(e)\leq 2$,
\item $H$ does not contain as a subgraph the graph $K_{2,4}$ such that its 
      vertices of degree~2 are of degree~2 in $H$,
\item $H$ does not contain as a subgraph a multigraph from $\dvK_{2,4}^M$ 
      such that every vertex of $\bigcup_{i=1}^4 N_H(w_i) \sm \{v_1,v_2\}$ 
      has degree $1$ in $H$,
\item $H$ does not contain as a subgraph a multigraph from $\dvK_{4P_4}^M$ 
      such that $N_H(w_i)=\{v_1,z_i\}$ and $N_H(z_i)=\{v_2,w_i\}$, 
      $i=1,2,3,4$.
\end{mathitem}
\end{lemma}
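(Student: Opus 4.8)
Write $\bG$ for the given $\Gt$-closure of $G$ and $H=\Lp(\bG)$; this $H$ is a well-defined multigraph by Theorem~\ref{thmA-Gamma3-clos-lajngraf}, $\bG$ is $\{\claw,W_5,W_4,P_6^2,P_6^{2+}\}$-free by Theorem~\ref{thmA-closure-W4-W5-P6}, and by Lemma~\ref{lemma-diamonds} the multigraph $H$ has no subgraph $D$, no triangle with a vertex of degree~$2$, and no $D^1$ or $D^2$ of the kind described there. Each of $(i)$--$(v)$ is proved by assuming the displayed configuration occurs in $H$ and reaching a contradiction by one of two routes. \emph{Route (P):} one shows that $\bG$ then contains an induced copy of $\Gt$ (equivalently, $H$ contains a copy of one of the multigraphs $F_1,F_2,F_3$ of Fig.~\ref{fig-Preimages}) or of one of $W_4,W_5,P_6^2,P_6^{2+}$, which is impossible. \emph{Route (M):} one produces a vertex $y\in V(G)$ for which $G-y$ is again a $3$-connected $\{\claw,\Gt\}$-free non-Hamilton-connected graph, contradicting minimality of $G$; this is typically used in the form ``a reduction carried out on $\bG$ or on $H$ transfers back to $G$'', which is legitimate because deleting a simplicial vertex of $\bG$ commutes with all the local completions producing $\bG$ from $G$.

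$(i)$ Suppose $x$ carries three pendant edges; their images $a_1,a_2,a_3$ in $\bG$ are pairwise adjacent simplicial vertices whose common neighbourhood, together with the $a_j$ themselves, is a clique $Q$. First, since $H$ is essentially $3$-edge-connected and $\bG$ is $3$-connected, $d_{\bG}(a_1)\ge 4$ (otherwise at most two vertices of $\bG$ separate the triangle $a_1a_2a_3$ from the rest). Next, a short hamiltonian-path surgery shows $\bG-a_1$ is still non-Hamilton-connected: given a pair $u,v$ admitting no hamiltonian path in $\bG$, replace $a_1$ in that pair by $a_2$ or $a_3$ if necessary, and observe that any hamiltonian path of $\bG-a_1$ between the resulting pair extends to one of $\bG$ by splicing $a_1$ in next to whichever of $a_2,a_3$ it meets --- the relevant path-neighbours, and $a_1$ itself, all lie in the clique $Q$. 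Since $a_1$ is simplicial of degree $\ge 4$ with a clique neighbourhood that no $2$-cut can split, $\bG-a_1$ is still $3$-connected, and it is $\{\claw,\Gt\}$-free as an induced subgraph of $\bG$. By Route (M), $G-a_1$ has all three properties, a contradiction.

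$(ii)$ Suppose an edge $e=uv$ on a cycle of length $\ge 3$ has $\mu(e)\ge 3$. In $\bG$, the neighbourhood of $L((uv)^1)$ contains the clique on the edges of $H$ at $u$ and the clique on those at $v$, overlapping in $\{L((uv)^2),L((uv)^3)\}$; as $uv$ lies on a cycle each of these cliques has at least three vertices, so $\langle N_{\bG}(L((uv)^1))\rangle_{\bG}$ is $2$-connected and $L((uv)^1)$ is locally $2$-connected, hence feasible by Theorem~\ref{thmA-HC-2-conn_neighb}. Since $\bG$ is $\Gt$-closed, $(\bG)^{^*}_{L((uv)^1)}=L(H|_{(uv)^1})$ contains an induced $\Gt$, so $H|_{(uv)^1}$ contains a copy of some $F_j$; a case analysis on which vertex of $F_j$ is the one created by contracting $(uv)^1$ and on which incident edges of $F_j$ are old --- entirely parallel to the proof of Lemma~\ref{lemma-diamonds}$(ii)$ --- re-expands this copy to a copy of $F_j$ in $H$, i.e.\ an induced $\Gt$ in $\bG$, a contradiction. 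Parts $(iii)$--$(v)$ are handled similarly: the degree-$1$/degree-$2$ side conditions force the gadget to sit in $\bG$ between the two cliques $C_1,C_2$ of edges at $v_1,v_2$, attached only along a bounded number of edges, and so that a hamiltonian path can traverse it only by crossing between $C_1$ and $C_2$. For $(v)$ one picks the doubled edge in each path $P_4^i$: its $\Lp$-vertex is eligible, and either it is feasible --- contracting it shortens that path, and after finitely many contractions the $\dvK_{4P_4}^M$-configuration becomes a $\dvK_{2,4}^M$-configuration, i.e.\ reduces to $(iv)$ --- or it is not feasible, and then, since contracting it preserves the gadget structure elsewhere while making the graph Hamilton-connected, the closure definition forces an induced $\Gt$ in $\bG$. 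For $(iv)$ and $(iii)$ one deletes a suitable interior vertex $w_i$ (or the $\Lp$-vertex of one of its incident edges): the remaining at least three parallel connections between $v_1$ and $v_2$ keep these vertices unseparated, $\{\claw,\Gt\}$-freeness is inherited, and non-Hamilton-connectedness survives because the deleted part was forced; transferring back via Route (M) contradicts minimality, and Route (P) handles the exceptional configurations.

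The main obstacle is the finite but delicate book-keeping underlying Route (P) for $(ii)$--$(v)$: after a contraction or a vertex deletion one must track precisely which edges of $H|_{e}$ (or of the reduced multigraph) are ``old'' and which are ``new'', in order to certify that an induced $\Gt$ --- equivalently a copy of some $F_j$ --- that has appeared genuinely lives inside the original $H$. This is exactly the flavour of the argument in Lemma~\ref{lemma-diamonds}$(ii)$--$(iii)$, and is where essentially all the work is. A secondary point, already visible in $(i)$, is the careful verification that a reduction performed on $\bG$ or $H$ really corresponds to deleting a vertex of the original graph $G$, so that it is the minimality of $G$, and not merely of $\bG$, that is contradicted.
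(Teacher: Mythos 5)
Your overall framing (``Route (M): contradiction with minimality of $G$'') matches the paper's engine for all five parts, and your argument for $(i)$ is essentially the paper's: the paper deletes $s-2$ pendant edges of $H$ (chosen to avoid the non-IDT pair $e,f$), observes that the resulting $H'$ is still essentially $3$-edge-connected, still without $\Lp(\Gt)$, and still without an $(e,f)$-IDT, so $L(H')$ is a smaller graph in the class --- the same reduction you describe, just phrased in $H$ rather than in $\bG$.

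From $(ii)$ onward, however, your proposal diverges and has genuine gaps.

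For $(ii)$ the paper's argument is one line: if $\mu(e)\ge 3$ for $e=ab$ on a cycle of length $\ge 3$, simply delete the extra parallel copies $(ab)^3,\dots,(ab)^s$ (again choosing notation to avoid the non-IDT pair). Essential $3$-edge-connectivity, $\Gt$-freeness, and non-Hamilton-connectedness are all preserved, and $L(H')$ is smaller. You instead argue that $L((uv)^1)$ is locally $2$-connected, hence feasible, so that $\bG^{^*}_{L((uv)^1)}$ must contain an induced $\Gt$, and then assert (``entirely parallel to Lemma 2$(ii)$'') that the corresponding $F_j$ in $H|_{(uv)^1}$ re-expands to an $F_j$ in $H$. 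This re-expansion is exactly the nontrivial step and is not established; moreover, feasibility requires $L((uv)^1)\in V_{NS}(\bG)$, which you do not verify, and in degenerate configurations (e.g.\ a triangle through $uv$) the neighbourhood of $L((uv)^1)$ can in fact be a clique. None of this is needed: the direct edge-deletion does the job.

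For $(iii)$--$(v)$ your sketch omits the crucial half of the argument. The paper proceeds in two steps: first, using Theorem D(vi) and Theorem F, it shows that the relevant gadget $F$ (the $K_{2,4}$, $\dvK_{2,4}^M$, or $\dvK_{4P_4}^M$) can be contracted in $H$ down to a single vertex plus pendant edges; second, it replaces $F$ in $H$ by the one-unit-smaller gadget $F'$ (a $K_{2,3}$, a $\dvK_{2,3}^M$, or a $\dvK_{3P_4}^M$) to get $H'$, and shows that the analogous contraction of $F'$ in $H'$ yields the same multigraph up to the number of pendant edges at the contracted vertex. That isomorphism is what transfers non-Hamilton-connectedness from $H$ to $H'$, and it is the substance of the proof. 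Your claim that ``non-Hamilton-connectedness survives because the deleted part was forced'' is not an argument --- deleting a vertex can of course create Hamilton-connectedness, and the contraction-comparison is precisely what certifies it does not here. Your treatment of $(v)$ is also off: if a vertex is \emph{not} feasible, the $\Gt$-closure definition imposes no constraint at all (only feasible local completions are required to stay $\Gt$-free or produce an induced $\Gt$), so the dichotomy ``feasible $\Rightarrow$ reduce to $(iv)$, infeasible $\Rightarrow$ induced $\Gt$'' does not hold; and in any case the paper does not reduce $(v)$ to $(iv)$ but handles it directly by the same contraction-comparison template.
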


\begin{proof}
$(i)$ By the assumption of the lemma, $H$ is essentially 3-edge-connected,
not containing an $\Lp(\Gt)$, and $H$ does not have an $(e,f)$-IDT for some 
$e,f\in E(H)$. 
Suppose that $H$ contains $s\geq 3$ pendant edges at a vertex $x\in V(H)$, 
choose $s-2$ of them such that none of them is $e$ or $f$, and let $H'$ be 
obtained from $H$ by removing the chosen $s-2$ pendant edges. Then 
clearly $H'$ is still essentially 3-edge-connected, not containing an 
$\Lp(\Gt)$, and $H'$ does not have an $(e,f)$-IDT. Since 
$|V(L(H'))|=|V(G)|-(s-2)<|V(G)|$, we have a contradiction with the minimality
of $G$.

\ms

$(ii)$ Similarly, suppose that $\mu_H(e)=s\geq 3$ for some edge $e=ab\in E(H)$, 
let $e,f\in E(H)$ be such that $H$ has no $(e,f)$-IDT, choose the notation
such that none of the edges $(ab)^3,\ldots,(ab)^s$ is any of $e,f$, and let 
$H'$ be obtained by from $H$ by removing the edges $(ab)^3,\ldots,(ab)^s$.
Then clearly $H'$ contains no $\Lp(\Gt)$, no $(e,f)$-IDT, and since $e=ab$ is
in a cycle of length at least~3, $H'$ is also essentially 3-edge-connected. 
Thus, the graph $G'=L(H')$ contradicts the minimality of $G$.

\ms

$(iii)$ 
Suppose that $H$ contains a subgraph 
$F\iso K_{2,4}=(\{v_1,v_2\},\{w_1,w_2,w_3,w_4\})$ such that $d_H(w_i)=2$, 
$i=1,2,3,4$. Then in $\co(H)$ we have $\mu(v_1v_2)=4$, hence $H$ is not
UM-closed by Theorem~\ref{thmA-ultim-clos}$(vi)$. Thus, some of the edges 
$w_iv_j$ is contractible. Choose the notation such that $v_1w_1$ is 
contractible and set $H_1=H|_{v_1w_1}$ and $G_1=L(H_1)$.
Then, in $G_1$, $\la N_{G_1}(L(v_1v_2))\ra_{G_1}$ is 2-connected by
Lemma~\ref{lemmaA-2_indep_sets}, hence $v_1v_2$ is contractible by 
Theorem~\ref{thmA-HC-2-conn_neighb}.
Set $H_2=H_1|_{v_1v_2}$ and $G_2=L(H_2)$. Then, in $H_2$, the whole $F$
contracts to a single vertex plus 8 pendant edges, and $G_2$ is still 
3-connected and not Hamilton-connected.

Let now $H'$ be obtained from $H$ by replacing the subgraph $F$ with the graph 
$F'=K_{2,3}=(\{v_1,v_2\},\{w_1,w_2,w_3\})$. Then, analogously, $v_1v_2$ is a 
triple edge in $\co(H')$, $v_1w_1$ (say) is contractible in $H'$, and 
$\la N_{G'_1}(L(v_1v_2))\ra_{G'_1}$ is 2-connected in $G'_1=L(H'_1)$, where 
$H'_1=H'|_{v_1w_1}$. In $H'_2=H'_1|_{v_1v_2}$ then $F'$ contracts to a single 
vertex plus 6 pendant edges. Moreover, $G'=L(H')$ is 3-connected, and $G'$ is 
Hamilton-connected if and only if $G'_2=L(H'_2)$ is Hamilton-connected. 

Since $H_2$ and $H_2'$ differ only in number of pendant edges at the vertex
resulting from contracting $F$ or $F'$, respectively, $G_2'$ is also not 
Hamilton-connected, implying $G'$ is not Hamilton-connected. Since $G'$
is an induced subgraph of $G$, $G'$ is $\Gt$-free. Thus, $G'$ contradicts 
the minimality of $G$.

\ms 

$(iv)$ 
Suppose that $H$ contains a submultigraph $F\in\dvK_{2,4}^M$ satisfying the 
assumptions of the lemma.
Note that, by part $(ii)$, every edge of $F$ has multiplicity 1 or 2, and,
by the definition of $\dvK_{2,4}^M$, at least one of the edges $w_iv_1,w_iv_2$ 
has multiplicity~2, $i=1,2,3,4$. For each $i=1,2,3,4$, let $f_i$ be one of 
the edges $w_iv_1,w_iv_2$ with $\mu(f_i)=2$. 

By Theorem~\ref{thmA-ultim-clos}$(vi)(\beta)$, some three of the edges 
$f_1,f_2,f_3,f_4$ are contractible, and we choose the notation such that 
the contractible edges are $f_2,f_3$ and $f_4$. 

Let $H'$ be the multigraph obtained from $H$ by removing the vertex $w_4$ 
and possibly the (by $(i)$ at most two) its neighbors of degree~1.
Observe that $H'$ is also essentially 3-edge-connected.

Set $H_1=H|_{\{f_2,f_3,f_4\}}$ and $H'_1=H'|_{\{f_2,f_3\}}$.
Then in $H$ the submultigraph $F$ contracts to a graph with vertices 
$v_1,v_2,w_1$, one multiple edge $v_1v_2$, and two edges $w_1v_1$ and $w_1v_2$, 
at least one of them being multiple. 
In $H'$, $F'$ contracts to the same structure with the only difference 
that $\mu_{H'}(v_1v_2)<\mu_{H}(v_1v_2)$. More specifically, since the multiple 
edge $v_1v_2$ results from 3 edges in $H$ but 2 edges in $H'$, we have
$\mu_H(v_1v_2)\geq\mu_{H'}(v_1v_2)+1$.

Again by Theorem~\ref{thmA-ultim-clos}$(vi)(\beta)$, one of the multiple edges 
in contractible, resulting in an edge with multiplicity at least~3 (both in $H$
and in $H'$), and another application of Theorem~\ref{thmA-ultim-clos}$(vi)$
contracts the whole $F$ in $H$ (or $F'$ in $H'$) to a single vertex with some 
pendant edges. Thus, the multigraphs $H|_F$ and $H'|_{F'}$  are isomorphic 
up to a different number of pendant edges at the vertex resulting from 
contracting $F$ (or $F'$). Consequently, $L(H|_F)$ is Hamilton-connected 
if and only if $L(H'|_{F'})$ is Hamilton-connected, implying that $G'$ is not
Hamilton-connected. Since $H'$ is essentially 3-edge-connected, $G'$ is 
3-connected, and $G'$ is $\Gt$-free since $G'\indsub G$. 
Thus, the graph $G'$ contradicts the minimality of $G$.

\ms

$(v)$
Suppose that $H$ contains a subgraph $F\in\dvK_{4P_4}^M$.
First observe that if, say, $w_1z_1$ is a double edge, then the edges 
$(w_1z_1)^1,(w_1z_1)^2,w_1v_1,v_1w_2,w_2z_2,z_2v_2,v_2z_3,v_2z_4$
determine an $\Lp(\Gt)$ in H, a contradiction. Thus, by symmetry, 
all edges $w_iz_i$, $i=1,2,3,4$, are simple edges.

By Theorem~\ref{thmA-ultim-clos}$(vi)(\beta)$, all multiple edges in $F$, 
except for possibly one, are contractible. Choose the notation such that 
the (possibly) noncontractible double edge is the edge $v_1w_1$, and let
$\tF$ be the subgraph of $F$ consisting of $P_4^2$, $P_4^3$ and $P_4^4$. 
Let $F_c$ be the set of contractible edges of $F$, and set $H_1=H|_{F_c}$. 
Then, in $H_1$, each of the paths $P_4^i$, $i=2,3,4$, contracts either 
to an edge $v_1v_2$ plus 4 pendant edges (if $|E(P_4^i)\cap F_c|=2$), 
or to a $(v_1,v_2)$-path of length~2 with interior vertex of degree~2,
plus 2 pendant edges (if $|E(P_4^i)\cap F_c|=1$).
Thus, the subgraph $\tF_1=\tF|_{F_c}\subset H_1$ corresponds in $\co(H_1)$ 
to a triple edge $v_1v_2$. By Theorem~\ref{thmA-ultim-clos}$(vi)$, 
$H_1$ is not UM-closed, hence some of the edges of $\tF_1$, say, $e$, is 
contractible.

If $e=v_1v_2$, then in $H_2=H_1|_e$ the whole $\tF$ contracts to a single 
vertex plus some pendant edges. Otherwise, $e$ is an edge of a $(v_1,v_2)$-path 
of length~2, and then in $H_2=H_1|_e$ the $(v_1,v_2)$-path is replaced with 
an edge $v_1v_2$ (plus a pendant edge); and repeating the argument, we are 
in the first case. Thus, in each of the cases, the contractions result in 
the graph $H_2$, in which $\tF$ is contracted to a single vertex (plus pendant 
edges). 
The graph $G_2=L(H_2)$ is 3-connected since clearly $H_2$ is essentially
3-edge-connected, and, by Theorem~\ref{thmA-ultim-clos}$(iii)$, $G_2$
is not Hamilton-connected. 

Let now $H'$ be obtained from $H$ by replacing the subgraph $F$ with 
$F'\in\dvK_{3P_4}^M$, and choose again the notation such that the (possibly)
noncontractible multiedge is the double edge $v_1w_1$. 
Let $\tF'$ be the subgraph of $F'$ consisting of $P_4^2$ and $P_4^3$. 
Then clearly $H'$ is also essentially 3-edge-connected, hence $G'=L(H')$
is 3-connected. Moreover, $H'|_{F'}$ is the same multigraph as $H_2$, with only
different number of pendant edges at the vertex resulting from contracting 
$\tF$ (or $\tF'$, respectively). Consequently, $L(H'|_{F'})$ is not 
Hamilton-connected, hence $G'=L(H')$ is also not Hamilton-connected (since
$L(H|_{F'})$ was obtained from $G'$ by a series of local completions).
Since $G'$ is an induced subgraph of $G$, the graph $G'$ is $\Gt$-free,
hence $G'$ contradicts the minimality of $G$.
\end{proof}

\ms

The following lemma will be crucial in the proof of Theorem~\ref{thm-main}
for graphs containing a small cycle. In the lemma, $C_9^M$ denotes the multigraph
obtained from the cycle $C_9=x_0x_1\ldots x_8$ by adding one parallel edge 
to each of the edges $x_0x_1$, $x_3x_4$ and $x_6x_7$, and $\cF$ denotes the 
finite family consisting of all multigraphs listed in the file {\tt F.txt}
available at \cite{computing1}.
We will folow the labeling of vertices of some special graphs as introduced in 
Figures~\ref{fig-diamonds} and \ref{fig-multidiamonds}.

%
%
\begin{lemma}
\label{lemmaX}
Let $G$ be a $\Gamma_3$-free line graph of a multigraph and let $H=L^{-1}(G)$. 
Then $H \in \mathcal{F}$ if and only if
$H$ satisfies all conditions (1), \dots, (8) 
and every subgraph $F$ of $H$
satisfies each of conditions (9), \dots, (14):
\begin{enumerate}
 \item[(1)] each vertex of $H$ has at most two neighbors of degree 1,
 \item[(2)] each multiedge of $H$ has multiplicity at most 2,
 \item[(3)] $H$ contains $C_k$ as a subgraph for some $k \in \{7, \dots, 10\}$,
 \item[(4)] $H$ does not contain $D$ as a subgraph,
 \item[(5)] $H$ does not contain $C_9^M$ as a flat subgraph,
 \item[(6)] at least 10 vertices of $H$ have degree at least 3,
 \item[(7)] $H$ is essentially $3$-edge-connected,
 \item[(8)] $H$ is essentially $2$-connected,
 \item[(9)] if $F\iso K_3$, then every vertex of $F$ has at least three neighbors 
    in $H$,
 \item[(10)] if $F\iso D^1$, then every vertex of $F$ has at least three neighbors 
    in $H$,
 \item[(11)] if $F\iso K_{2,4}$, then $\sum_{i=1}^4 |N_H(w_i)| > 8$,
 \item[(12)] if $F\iso D^2$, then $|N_H(c_1)|+|N_H(c_2)| > 4$ and 
     $|N_H(d_1)|+|N_H(d_2)| > 4$,
 \item[(13)] if $F\in \mathbb{K}^M_{4,P_4}$, then 
     $\sum_{u\in V(F)\sm\{v_1,v_2\}}|N_H(u)| > 16$,
 \item[(14)] if $F\in \mathbb{K}^M_{2,4}$, then some vertex of 
     $\cup^4_{i=1} N_H(w_i)\sm\{v_1,v_2\}$ has degree at least $2$ in~$H$.
\end{enumerate}
\end{lemma}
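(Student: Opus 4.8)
The plan is to recognize that Lemma~\ref{lemmaX} is essentially a \emph{verification statement}: the finite family $\cF$ stored in the file {\tt F.txt} is precisely the set of multigraphs $H$ (with $G = L(H)$ being $\Gamma_3$-free line graphs) that survive conditions (1)--(14). Hence the ``only if'' direction is a finite bookkeeping task, and the ``if'' direction is a finiteness-plus-enumeration argument. I would organize the proof accordingly into two halves.

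For the \textbf{``only if''} direction, I would argue that each $H\in\cF$ satisfies (1)--(14). Conditions (1)--(8) are properties imposed directly on $H$: (1)--(2) are the bounds on pendant-edge count and multiplicity (exactly as in Lemma~\ref{lemma-minimality}(i),(ii) and Lemma~\ref{lemma-diamonds}); (3) records that $\cF$ contains only multigraphs with a short cycle (this is why $\cF$ is the family relevant to the ``small cycle'' case of the main proof); (4) is Lemma~\ref{lemma-diamonds}(i); (5) is a new forbidden flat subgraph $C_9^M$ whose line graph must contain an induced $\Gamma_3$ (so $G$ $\Gamma_3$-free forces it out); (6) is a lower bound on the number of branch vertices, needed so that $\cF$ is genuinely finite yet captures the ``interesting'' preimages; (7)--(8) are the connectivity hypotheses. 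Conditions (9)--(14) are exactly the subgraph-exclusion statements of Lemma~\ref{lemma-diamonds}(ii),(iii) and Lemma~\ref{lemma-minimality}(iii),(iv),(v), restated as degree inequalities on the candidate subgraphs $F$. So the content of ``only if'' is: the enumeration producing {\tt F.txt} was performed \emph{subject to} these conditions, so every member satisfies them by construction; this can be independently re-checked by a short certified script, which is what \cite{computing1} provides.

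For the \textbf{``if''} direction -- the substantive one -- I would proceed as follows. First I would show that the conditions force $H$ into a bounded class: condition (3) gives a cycle $C_k$ with $k\le 10$, conditions (1), (2), (7), (8) bound local multiplicities and pendant edges, condition (4) and condition (5) eliminate dense local configurations, and condition (6) combined with essential $3$-edge-connectivity bounds $|V(H)|$ and $|E(H)|$ from above (a branch vertex has degree $\ge 3$, essential $3$-edge-connectivity controls how vertices of degree $2$ can be strung together, and the conditions (9)--(14) cap how many short internally disjoint $v_1$--$v_2$ connections or $K_{2,r}$-type bundles can appear). Once $|V(H)|$ is bounded, there are finitely many candidate multigraphs; the ones that additionally have $L(H)$ $\Gamma_3$-free and satisfy all of (1)--(14) are, by definition, exactly the list in {\tt F.txt}. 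Thus $H\in\cF$.

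The \textbf{main obstacle} is making the boundedness argument of the ``if'' direction effective and airtight: one must convert conditions (3)--(14) into an explicit upper bound on $|V(H)|$ (say via the core $\co(H)$, using Theorem~\ref{thmA-core}(ii),(iii) that $\co(H)$ is $3$-edge-connected and dominates all edges, so its size is bounded once the circumference is bounded by (3) and its branch structure is constrained by (6) and (9)--(14)), and then the finite search itself is large enough that it is carried out by computer. This is precisely the ``computer-assisted'' step flagged in the introduction, so in the paper the honest presentation is: reduce to a finite, explicitly bounded search space by the structural arguments above, then cite the verified computation at \cite{computing1} for the enumeration. I would therefore spend the written proof on the reduction to bounded size and defer the exhaustive check, noting that each direction is a mechanical verification once the bound is in place.
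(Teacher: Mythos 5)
Your proposal splits into the two directions of the biconditional, and the ``only if'' side (membership in $\cF$ implies the fourteen conditions) is broadly in the spirit of what the paper does: every multigraph that the algorithm places in {\tt F.txt} is constructed via extensions that keep (1)--(3) invariant, and it is added only when it passes the tests corresponding to (4)--(14). That is fine.

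The ``if'' direction is where your proposal has a genuine gap. You claim that conditions (1)--(14) by themselves ``force $H$ into a bounded class,'' attributing the bound on $|V(H)|$ primarily to (3), (6), (7), (8) together with the subgraph-degree conditions (9)--(14). This is false. Take any $3$-connected cubic bipartite graph of girth $8$ (for instance the Tutte--Coxeter graph, or arbitrarily large cubic cages of girth $8$). Such a graph trivially satisfies (1)--(8); and because it is bipartite with girth $8$ it contains no $K_3$, no $D^1$, no $D^2$, no $K_{2,4}$, and no multigraph from $\dvK_{2,4}^M$ or $\dvK_{4P_4}^M$, so conditions (9)--(14) are vacuous. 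Thus there is no $a$ priori bound on $|V(H)|$ coming from the conditions alone. What makes the class finite is the hypothesis that $L(H)$ is $\Gamma_3$-free together with (4) and (5): since one preimage $F_1=\Lp(\Gamma_3)$ is a tree (a path of length $4$ whose endpoints each carry two pendant edges), any essentially $3$-edge-connected multigraph with enough branch vertices contains $F_1$ as a subgraph unless its structure is very tightly constrained. (Indeed, my cubic girth-$8$ example contains $F_1$ and therefore its line graph is not $\Gamma_3$-free, which is precisely why it is not a counterexample to the lemma.) Your proposed proof never invokes $\Gamma_3$-freeness in the boundedness argument, so as written the argument does not go through.

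Beyond that, the paper's proof is structurally different from ``bound $|V(H)|$ then enumerate all candidates.'' It designs a guided recursive enumeration: start from the cycles $C_k$, $k\in\{7,\ldots,10\}$, and extend via the operations $\Aa(M,U)$ (vertex additions) and $\Mm(M,R)$ (edge multiplications), where the sets $U$ and $R$ are chosen by the violated condition at hand (procedure \textproc{get\_all\_solution\_attempts}). The heart of the proof is the four claims about flat subgraphs (Claims~\ref{claim-lemmaX-1}--\ref{claim-lemmaX-4}) showing that such a search never skips over a multigraph satisfying the hypotheses; the finiteness of each branch is then an $a$ posteriori observation (each extension eventually violates the line-$2$ test -- $\Gamma_3$-freeness, no $D$, no $C_9^M$ as flat subgraph). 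The notion of \emph{flat} subgraph and the maximal-$\sum\d_M$ induction are essential to that argument and do not appear in your proposal. So the proposal is not a correct ``different route'' to the lemma; it is missing the mechanism that actually proves completeness of the enumeration, and the one structural claim it does make (conditions imply boundedness of $|V(H)|$) is incorrect.
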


\begin{proof}
We prove the lemma with the help of a computer.
To this end, we design an algorithm that essentially checks all possible candidates 
for the multigraph $H$ and generates a plain text full proof of the lemma.
The source code and the full proof can be found at \cite{computing1}.
Here, we explain the logic behind the algorithm and show that, indeed, it proves 
the lemma.

The general approach of the algorithm is to start with a small graph
and test all relevant extensions, step by step, until it is clear that
no multigraph obtained by further extensions can possibly satisfy the lemma.
Given a multigraph $M$ and sets $U \subset V(M)$ and $R \subset E_S(M)$,
we consider two types of extensions denoted by $\Aa(M,U)$ and $\Mm(M,R)$ as follows:
\begin{itemize}
\item
  $\Aa(M,U)$ is the family of all possible multigraphs $M^+$ obtained from $M$ 
  by adding a vertex $v$ such that $v$~is incident with no multiedge in $M^+$ 
  and $|N_{M^+}(v) \cap U| \geq 1$, and if $\d_{M^+}(v) = 1$ then $v$ has at 
  most one twin in $M^+$,
\item
  $\Mm(M,R)$ is the family of all possible multigraphs $M^+$ obtained from $M$
  by multiplying an edge of $R$.
\end{itemize}
We write $\Aa(M)$ as a short for $\Aa(M,V(M))$, and similarly $\Mm(M)$ for 
$\Mm(M,E_S(M))$.   

For every $k \in \{7, \dots, 10\}$, the computer considers all graphs on $k$ 
vertices containing $C_k$ (but containing no $C_\ell$ subgraph where 
$7 \leq \ell \leq k - 1$).
For each of these graphs, it tests all relevant extensions by calling 
\textproc{investigate}(), see Algorithm 1.

\begin{algorithm}
\caption{Recurrent investigation of $M$, the algorithm branches on 
         $\Bb \subset \Aa(M) \cup \Mm(M)$}
\begin{algorithmic}[1]
\Procedure{investigate}{$M$}
    \If{$L(M)$ is $\Gamma_3$-free \AND $M$ contains no subgraph $D$ and no flat 
           subgraph $C_9^M$}
        \If{conditions (6), \dots, (14) are all satisfied}
        	\State add $M$ to \texttt{F.txt}
        	\State set $\Bb = \Aa(M) \cup \Mm(M)$    
		\Else
        	\State choose a violated condition $p$ from (6), \dots, (14) at random 
        	\State set $\Bb = $ \Call{get\_all\_solution\_attempts}{$M, p$}
        	\Comment{see Algorithm 2}
		\EndIf
		\For{each $M^+ \in \Bb$}
			\State \Call{investigate}{$M^+$}		
			\Comment{recurrence on extended multigraphs}
		\EndFor
	\EndIf
\EndProcedure
\end{algorithmic}
\end{algorithm}

\begin{algorithm}
\caption{Choose a particular problem of type $p$ at random and return all relevant solution attempts}
\begin{algorithmic}[1]
\Procedure{get\_all\_solution\_attempts}{$M, p$}
    \If{$p$ is (6)} 
		\State set $R$ as the set of all edges from $E_S(M)$ incident with a vertex of degree $2$ in $M$
		\State \Return{$\Aa(M) \cup \Mm(M, R)$}
	\EndIf
	\If{$p$ is (7)}	
		\State choose an essential $2$-edge-cut $\{e,f\}$ 
		\If{$\{e,f\} \subset E_S(M)$}	
			\State choose a non-trivial component $C$ of $M-\{e,f\}$
			\State \Return{$\Aa(M, V(C)) \cup \Mm(M, \{e,f\})$}			
		\Else \Comment{$M$ has an essential cutvertex}
			\State reset $p = $ (8)
		\EndIf	
	\EndIf	
	\If{$p$ is (8)}	
		\State choose an essential cutvertex $u$ and a non-trivial component $C$ of $M-u$
		\State \Return{$\Aa(M, V(C))$}
	\EndIf
	\If{$p$ is (9)}	
		\State choose a vertex $u$ of a $K_3$ subgraph such that $|N_M(u)| = 2$
		\State \Return{$\Aa(M, \{u\})$}
	\EndIf
	\If{$p$ is (10)} 
		\State choose a vertex $u$ of a $D^1$ subgraph such that $|N_M(u)| = 2$
		\State \Return{$\Aa(M, \{u\})$}
	\EndIf
	\If{$p$ is (11)} 
		\State choose a $K_{2,4}$ subgraph such that $\sum_{i=1}^4 |N_M(w_i)| = 8$
		\State \Return{$\Aa(M, \{w_1, \dots, w_4\})$}
	\EndIf
	\If{$p$ is (12)} 
		\State choose a $D^2$ subgraph such that $|N_M(c_1)| + |N_M(c_2)| = 4$
		\Comment{or $d_1, d_2$ by symmetry}
		\State \Return{$\Aa(M, \{c_1, c_2\})$}
	\EndIf
	\If{$p$ is (13)} 
		\State choose a subgraph $F \in \mathbb{K}_{4, P_4}^{M}$ such that
		$\sum_{u \in V(F)\sm \{v_1,v_2\}} |N_M(u)| = 16$
		\State \Return{$\Aa(M, V(F)\sm \{v_1,v_2\})$}
	\EndIf
	\If{$p$ is (14)} 
		\State set $U = \bigcup_{i=1}^4 N_M(w_i) \sm \{v_1,v_2\}$
		\State choose a subgraph $F \in \mathbb{K}_{2,4}^M$ such that each vertex of 
               $U$ has degree $1$ in $M$
		\State \Return{$\Aa(M, U\cup\{w_1,\ldots,w_4\})$}
    \EndIf
\EndProcedure
\end{algorithmic}
\end{algorithm}

We should note that considering all extensions of $\Aa(M) \cup \Mm(M)$ in each 
iteration is not desirable since it leads to infinite families of multigraphs 
satisfying the condition on line~2 of Algorithm~1 
(and hence the process never finishes).
The key idea of the proof is to choose a particular violation of a condition from 
(6), \dots, (14) and to test just all extensions potentially helping to solve this 
violation (see lines 7 and 8 of Algorithm 1).
It turns out that every branch of this investigation is finite
since at some point each $M^+ \in \Bb$ fails to satisfy the condition on line 2 of Algorithm 1. 
The relevant extensions are obtained by calling \textproc{get\_all\_solution\_attempts}(),
see Algorithm 2.

We now show that the algorithm, indeed, proves the lemma.
It suffices to show that a multigraph $H$ is added to \texttt{F.txt} by the algorithm
if and only if $H$ completely satisfies the hypothesis of the lemma.

We first consider a multigraph $H$ added to \texttt{F.txt}
and we show that it satisfies the hypothesis of the lemma.
We recall that $H$ is obtained from a simple graph on $k$ vertices containing $C_k$
where $k \in \{7, \dots, 10\}$ by iteratively applying extensions $\Aa()$ and $\Mm()$,
and we note that $H$ has the following properties:
\begin{itemize}
\item
   $H$ contains neither loops nor pendant multiedges,
\item
   $H$ satisfies conditions (1), (2) and (3).
\end{itemize}
In particular, we note that pendant simple edges of $H$ precisely correspond to 
simplicial vertices of $L(H)$, and thus $H$ is a preimage of a line graph of 
multigraph.
Since $H$ is added to \texttt{F.txt} at line 
4 of Algorithm~1, $H$ satisfies the conditions on lines 2 and 3 of Algorithm~1.
We conclude that $H$ is a preimage of a $\Gamma_3$-free line graph of multigraph
and $H$ satisfies all conditions (1), \dots, (14). 

Next, we let $H$ be a multigraph which satisfies the hypothesis of the lemma,
and we show that $H$ is added to \texttt{F.txt}.
Since $H$ is the preimage of a line graph of multigraph,
$H$ has neither loops nor pendant multiedges.
We show the following four claims.

%
%
\setcounter{prostrclaim}{0}
\begin{claim} 
\label{claim-lemmaX-1}
Every flat subgraph of $H$ satisfies the condition on line 2 of Algorithm 1.
\end{claim}

%
%
\begin{claim} 
\label{claim-lemmaX-2}
Let $M$ be a flat subgraph of $H$ and let $U \subset V(M)$
such that for every $u \in U$, all vertices of $N_M(u)$ of degree $1$ in $M$
also belong to $U$.
If there is a vertex $x$ of $V(H) \sm V(M)$ such that $|N_H(x) \cap U| \geq 1$,
then $H$ contains a multigraph $M^+$ from $\Aa(M, U)$
as a flat subgraph.
\end{claim}

%
%
\begin{claim} 
\label{claim-lemmaX-3}
Let $M$ be a flat subgraph of $H$.
If $M$ is distinct from $H$,
then $H$ contains a multigraph $M^+$ from $\Aa(M) \cup \Mm(M)$
as a flat subgraph (possibly $M^+ \iso H$).
\end{claim}

%
%
\begin{claim} 
\label{claim-lemmaX-4}
Let $M$ be a flat subgraph of $H$.
If $M$ violates a condition $p$ from (6), \dots, (14),
then $H$ contains some multigraph $M^+$ given by 
{\rm \textproc{get\_all\_solution\_attempts}($M, p$)}
as a flat subgraph.
\end{claim}

\begin{proofclbt}\underline{of Claim~\ref{claim-lemmaX-1}.}
For the sake of a contradiction,
we suppose that $H$ has a flat subgraph $M$ which fails to satisfy the condition on line 2 of Algorithm 1.
Hence, $M$ contains a subgraph $X$ such that $L(X) \iso \Gamma_3$ 
or contains $D$ as a subgraph
or $M$ contains $C_9^M$ as a flat subgraph.
Since $M$ is a flat subgraph of $H$,
every (flat) subgraph of $M$ is also a (flat) subgraph of $H$.
Hence,
$H$ also fails to satisfy the condition on line 2 of Algorithm 1.
In other words, $L(H)$ is not $\Gamma_3$-free
or $H$ violates condition (4) or (5),
a contradiction.
\end{proofclbt}

\begin{proofclbt}\underline{of Claim~\ref{claim-lemmaX-2}.}
We let $M'$ be the multigraph obtained from $M$
by adding a new vertex adjacent by simple edges to precisely the vertices of $N_{H}(x) \cap V(M)$.
Clearly, $M'$ is a flat subgraph of $H$. 
If $M'$ belongs to $\Aa(M, U)$, then we are done.
Hence, we can assume that $M'$ does not belong to $\Aa(M, U)$,
and it follows that the new vertex is of degree $1$ in $M'$ and has more than one twin in $M'$.
We let $A$ be a set consisting of the new vertex and its two twins in $M'$.
Since $H$ satisfies condition (1), there exists a vertex, say $y$, of $V(H) \sm V(M')$
such that $|N_{H}(y) \cap A| \geq 1$.
We choose a vertex $u \in A$ such that $|N_{H}(y) \cap A \sm \{u\}| \geq 1$,
and we consider the multigraph $M' - u$ (we note that it is isomorphic to $M$).
We let $M^+$ be the multigraph obtained from $M' - u$
by adding a new vertex adjacent by simple edges to precisely the vertices of
$N_{H}(y) \cap V(M' - u)$.
Clearly, $M^+$ is a flat subgraph of $H$.
Finally, we note that the hypothesis of Claim~\ref{claim-lemmaX-2} implies that  
$|A \cap U| \geq 2$.
Since the vertices of $A$ are twins in $M'$ 
and the new vertex of $M^+$ is adjacent to a vertex of $A$,
we conclude that $M^+ \in \Aa(M, U)$.
\end{proofclbt}

\begin{proofclbt}\underline{of Claim~\ref{claim-lemmaX-3}.}
We discuss two cases based on $V(M)$.
For the first case, we suppose that $V(M) = V(H)$.
We recall that $H$ has no loops, no pendant multiedges and no multiedges of multiplicity greater than $2$.
Since $M$ is a flat subgraph of $H$ and $M$ is distinct from $H$,
there is a simple non-pendant edge $e$ of $M$ which corresponds to a multiedge in $H$.
We let $M^+$ be the multigraph obtained from $M$ by multiplying $e$,
and we conclude that $M^+$ is a flat subgraph of $H$ and $M^+$ belongs to $\Mm(M)$.

For the second case, we suppose that $V(M)$ is a proper subset of $V(H)$.
Since $H$ is connected, there is a vertex $x$ of $V(H) \sm V(M)$ such that $|N_H(x)\cap V(M)|\geq 1$.
We observe that we can apply Claim~\ref{claim-lemmaX-2}
with $U = V(M)$
and obtain a desired multigraph $M^+$.
\end{proofclbt}

\begin{proofclbt}\underline{of Claim~\ref{claim-lemmaX-4}.}
We discuss nine cases based on $p$.

First, we consider the case where $p$ is (6) which stands for the fact that the 
multigraph $M$ has few vertices of degree at least~$3$.
We use that $H$ is connected, has no pendant multiedge
and satisfies condition (6) and that $M$ is a flat subgraph of $H$,
and we observe that at least one of the following is true: 
\begin{itemize}
\item
   some vertex of $V(H) \sm V(M)$ is adjacent to a vertex of $V(M)$ in $H$, or
\item
   some edge of $R$ corresponds to a multiedge in $H$. 
\end{itemize}
For the first item, we use Claim~\ref{claim-lemmaX-2} with $U = V(M)$ and 
observe that $H$ contains a multigraph from $\Aa(M)$ 
as a flat subgraph.
For the second item, we note that $H$ contains a multigraph from $\Mm(M, R)$ as a flat subgraph.
The obtained multigraph from $\Aa(M) \cup \Mm(M, R)$ is included at line 4 of Algorithm 2.

For the case (7),
we consider the condition on line 7 of Algorithm 2 and discuss the two options.
We first suppose that the condition is satisfied, that is, $\{e, f\} \subset E_S(M)$.
Since $H$ satisfies condition (7) and $M$ is a flat subgraph of $H$,
at least one of the following is true: 
\begin{itemize}
\item
   some vertex of $V(H) \sm V(M)$ is adjacent to a vertex of $C$, or
\item
   some edge of $\{e, f\}$ corresponds to a multiedge in $H$. 
\end{itemize}
For the first item, we consider an arbitrary vertex $u$ of $C$
and note that all vertices of $N_M(u)$ of degree $1$ in $M$ also belong to $C$.
Hence, we can apply Claim~\ref{claim-lemmaX-2} with $U = V(C)$.
For the second item, we note that $H$ contains a multigraph from $\Mm(M, \{e,f\})$ as a 
flat subgraph.
It follows that $H$ contains a multigraph from 
$\Aa(M, V(C)) \cup \Mm(M, \{e,f\})$ 
as a flat subgraph,
and this multigraph is included at line 9 of Algorithm 2.

Next, we suppose that one of the edges, say $e$, is a pendant edge in $M$.
We observe that at least one of the vertices incident with $f$ is an essential 
cutvertex in $M$.  
Hence, the algorithm can reset $p = (8)$ and continue with lines 12, 13 and 14.

For the case (8),
we use that $H$ satisfies condition (8) and $M$ is a flat subgraph of $H$,
and we note that some vertex of $V(H) \sm V(M)$ is adjacent to a vertex of $C$.
We apply Claim~\ref{claim-lemmaX-2} with $U = V(C)$,
and we conclude that $H$ contains a multigraph from 
$\Aa(M, V(C))$ as a flat subgraph,
and it is included at line 14 of Algorithm 2.
 
For the case (9),
we use that $H$ satisfies condition (9),
and hence some vertex of $V(H) \sm V(M)$ is adjacent to $u$.
Since $|N_M(u)| = 2$, the vertex $u$ has no neighbor of degree $1$ in $M$.
Thus, we can apply Claim~\ref{claim-lemmaX-2} with $U = \{u\}$ and conclude that  
$H$ contains a multigraph from $\Aa(M, \{u\})$ as a flat subgraph. 
This multigraph is included at line 17 of Algorithm 2.

We note that the cases (10), \dots, (13) are similar to (9).
In each case, $M$ contains a set, say~$U$, of vertices (possibly of size $1$)
such that no vertex of $U$ has neighbor of degree $1$ in $M$
and at least one vertex of $U$ has an additional neighbor in $H$.
Hence, Claim~\ref{claim-lemmaX-2}  yields that $H$ contains a multigraph from 
$\Aa(M, U)$ as a flat subgraph, and it is included at the respective line of 
Algorithm~2. 

Lastly, for the case (14) we note that $H$ has a vertex 
of degree at least $2$ adjacent to at least one of the vertices $w_1, \dots, w_4$.
In particular, this vertex has at least two neighbors in $H$ since $H$ has no 
pendant multiedge.
Hence, there exists a vertex $x$ of $V(H) \sm V(M)$ such that at least one of the 
following is true:
\begin{itemize}
\item
   $x$ is adjacent to at least one of $w_1, \dots, w_4$, or
\item
   $x$ is adjacent to a vertex $u$ such that 
   $\d_M(u) = |N_M(u) \cap \{w_1, \dots, w_4\}| = 1$ (in particular, adding 
   $x$ increases the degree of $u$ to $2$).
\end{itemize}
In other words,
$x$ is adjacent to a vertex of $U = \bigcup_{i=1}^4 N_M[w_i] \sm \{v_1,v_2\}$
since each vertex of $N_M(w_i) \sm \{v_1,v_2\}$ has degree $1$ in $M$.
Hence, Claim~\ref{claim-lemmaX-2} yields that $H$ contains a multigraph from 
$\Aa(M, U)$ as a flat subgraph,
and this multigraph is included at line 33 of Algorithm 2.
\end{proofclbt}

With Claims~\ref{claim-lemmaX-1}, \ref{claim-lemmaX-3} and~\ref{claim-lemmaX-4}
on hand, we now show the desired implication.
For the sake of a contradiction,
we suppose that there exists a multigraph $H$ which satisfies the hypothesis of the lemma
but is not added to \texttt{F.txt}. 
We consider all flat subgraphs $M$ of $H$ for which the algorithm calls \textproc{investigate}($M$),
and we choose such a multigraph $M$ maximizing $\sum_{u \in V(M)} \d_M(u)$.
We should also say that such $M$ clearly exists since $H$ satisfies condition (3) and 
the algorithm calls \textproc{investigate}() for the graphs on $k$ vertices containing $C_k$
where $k \in \{7, \dots, 10\}$.
Furthermore,
since $M$ is obtained by recurrently extending one of these graphs by $\Aa()$ and $\Mm()$,
we note that $M$ has no pendant multiedges.

Since $M$ is a flat subgraph of $H$,
$M$ satisfies the condition on line 2 of Algorithm 1 by Claim~\ref{claim-lemmaX-1}.
We now discuss $M$ subject to the condition on line 3 of Algorithm 1
and we obtain a multigraph $M^+$ as follows.
If $M$ satisfies this condition,
then $M$ is added to \texttt{F.txt} at line 4 of Algorithm 1.
Hence, $M$ is distinct from $H$ (since $H$ is not added to \texttt{F.txt}),
and thus we can apply Claim~\ref{claim-lemmaX-3} to $M$; and we let $M^+$ be 
the obtained multigraph.
Otherwise, $M$ violates a condition from (6), \dots, (14), and we can apply 
Claim~\ref{claim-lemmaX-4}  to~$M$; and we let $M^+$ be the obtained multigraph.

We consider the obtained multigraph $M^+$,
and we note that $M^+$ belongs to $\Bb$ due to line 5 or 8 of Algorithm~1.
Finally, line 10 of Algorithm 1 calls \textproc{investigate}($M^+$),
which contradicts the choice of $M$.
Thus, every multigraph satisfying the hypothesis of the lemma
is added to \texttt{F.txt} which concludes the proof of the equivalence.

In order to improve runtime,
our implementation of the algorithm is slightly more involved. 
In the remainder of the proof, we outline details of the implementation
(an interested reader is also invited to have a look at the code and the commentary therein).

In the implementation, we keep track of solved cases
(that is, we save multigraphs whose all extension branches are finished).
Later, when the computer investigates a different case, it tests 
whether this is already solved and then perhaps not investigate it again 
(it tests whether some of the saved multigraphs
appears as a flat subgraph of the multigraph currently investigated).
The list of solved multigraphs can be easily kept short and apt in the recurrence scheme. 

In each branch, we keep track of the extensions.
In particular, an extension is not investigated again if it is known to 
lead to violating the condition on line 2 of Algorithm 1 
or to a case already solved. 

Lastly, a violated condition $p$ and a particular violation are chosen at random,
but the choice favors small $|\Bb|$
(it is not uniformly random, and we consider only violations whose sets $\Bb$ are inclusion minimal).  
In fact, if there is a particular violation such that $|\Bb| = 0$,
then this choice is always preferred
(this means that the violation cannot be fixed 
even with further extensions and the branch finishes).
\end{proof}

\bs

\begin{proofbt} {\bf of Theorem~\ref{thm-main}.} \quad
Let, to the contrary, $G$ be a minimal 3-connected $\{\claw,\Gt\}$-free graph 
that is not Hamilton-connected, let $\bG$ be one of its $\Gt$-closures, and let 
$H=\Lp(\bG)$. Obviously, $H$ is essentially 3-edge-connected since $G$ is 
3-connected. If $H$ has a cutvertex, we can apply our considerations to each 
of its (nontrivial) blocks, hence we can assume that $H$ is essentially 
2-connected. Finally, it is straightforward to verify that if $H\iso W$ 
(see Fig.~\ref{fig-Wagner}), then $G$ is Hamilton-connected, hence $H\niso W$. 
By Theorem~\ref{thmA-strongly_trailable}$(i)$, $H$ contains a cycle of length 
at least 9.

Moreover, we have the following fact.

%
%
\setcounter{prostrclaim}{0}
\begin{claim}
\label{claim-thm1-wagnerace1}
If $\co(H)\in\{W\cup\dvW\}$, then $H$ contains as a subgraph an $\Lp(\Gt)$, or 
$G=L(H)$ is Hamilton-connected. 
\end{claim}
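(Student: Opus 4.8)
The plan is to reduce the claim to a finite structural search inside the Wagner graph, using the core machinery together with the symmetry of $W$. Recall that $\co(H)$ is $3$-edge-connected (Theorem~\ref{thmA-core}$(ii)$) and, under the hypothesis of the claim, has at most $9$ vertices ($8$ if $\co(H)=W$, $9$ if $\co(H)\in\dvW$). If $\co(H)$ happens to be strongly spanning trailable, then $L(H)$ is Hamilton-connected by Theorem~\ref{thmA-core}$(v)$ and we are done; so we may assume it is not, and aim to exhibit an $\Lp(\Gt)$ in $H$. (This is also why $W$ and $\dvW$ are the relevant obstructions: by Theorem~\ref{thmA-strongly_trailable}$(ii)$, every $3$-edge-connected core on at most $9$ vertices outside $\{W\}\cup\dvW$ is strongly spanning trailable, hence already handled by Theorem~\ref{thmA-core}$(v)$.) Next I would record the structure of $H$: since $\co(H)$ is obtained from $H$ by deleting pendant edges and suppressing degree-$2$ vertices, $H$ arises from $\co(H)$ by subdividing some of its edges (subdivision vertices having degree exactly~$2$ and incident only to simple edges) and attaching at most two pendant edges to each branch vertex; by Lemma~\ref{lemma-minimality}$(ii)$ every multiplicity is at most~$2$, by Lemma~\ref{lemma-diamonds}$(i)$ there is no diamond, $H\niso W$, and $H$ has a cycle of length at least~$9$ (established just before the claim).

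I would then split into the two cases. In each, the target is one of the three preimages $F_1,F_2,F_3$ of $\Gt$ (Fig.~\ref{fig-Preimages}) as a subgraph of $H$ --- equivalently, a path $q_1q_2q_3q_4q_5$ of length~$4$, on $9$, $8$, or $7$ distinct vertices, whose two ends each carry two more edges to two new distinct vertices, or a double edge to one new vertex. If $\co(H)=W$: then $H$ has no multiedge (both endpoints of a multiedge would have degree at least~$3$ in $H$, so they and the multiedge would survive in $\co(H)$, but $W$ has none, pendant multiedges being excluded by Theorem~\ref{thmA-vzor_jedn}); and since $c(H)\ge 9>8=c(W)$, at least one edge of $W$ is genuinely subdivided in $H$, so $|V(H)|\ge 9$. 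If $\co(H)\in\dvW$: then, using Lemma~\ref{lemma-minimality}$(ii)$ and Lemma~\ref{lemmaA-SMclos-degree2}, $\co(H)$ is up to isomorphism the single multigraph obtained from $W$ by subdividing one edge and doubling one of the two resulting edges, and $H$ carries exactly this one double edge.

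The search itself exploits that $W$ is vertex- and edge-transitive, $3$-regular and $3$-connected, and contains length-$4$ paths of several shapes: after fixing an automorphism-orbit representative for the subdivided edge(s) and for the positions of the pendant edges (respectively the double edge), one places such a copy with all its vertices distinct, routing the interior vertices $q_2,q_3,q_4$ through subdivision vertices where needed and using a pendant vertex, a subdivision vertex, or (in the $\dvW$ case) the far end of the double edge as an end-vertex $s_i$; in the $\dvW$ case the double edge of $H$ serves as that of $F_2$. The remaining configurations not covered this way have $|V(H)|\le 9$ and $H\notin\{W\}\cup\dvW$ (for instance, $W$ with a single length-$2$ subdivision and no pendant edges), so Theorem~\ref{thmA-strongly_trailable}$(ii)$ gives that $H$ itself is strongly spanning trailable and $L(H)$ is Hamilton-connected. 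I expect the main obstacle to be the combinatorial bookkeeping in the embedding step: since $W$ has girth~$4$, naive choices of the path or of the four end-vertices force coincidences, so the case split --- organized by the orbit of the subdivided edge(s) and by the presence of pendant edges or of the double edge --- must be arranged carefully and verified to be exhaustive; a secondary point is to confirm that $\Gt$-closedness of $\bG$ already precludes the "long, uniform" subdivisions of $W$ for which no copy of $\Lp(\Gt)$ exists, so that the strong-spanning-trailability fallback indeed covers everything left.
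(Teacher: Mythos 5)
Your outline shares the starting point with the paper (restrict $\co(H)$ via Lemma~\ref{lemma-minimality}$(ii)$, then try to exhibit $\Lp(\Gt)$), but several steps are broken or missing, and they are precisely the steps the paper's proof supplies.

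A factual slip first: the Wagner graph has two edge orbits (rim and chord), so subdividing one edge and doubling one of the two halves yields \emph{two} non-isomorphic multigraphs; the paper lists them as $W_1$ and $W_2$ and treats both. Claiming a single multigraph in the $\dvW$ case drops part of the work.

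More seriously, your fallback via Theorem~\ref{thmA-strongly_trailable}$(ii)$ does not apply. That theorem requires $H$ itself to be $3$-edge-connected, but any $H$ with a subdivision vertex or pendant edge has a vertex of degree at most $2$ and is only \emph{essentially} $3$-edge-connected. The usable interface is Theorem~\ref{thmA-core}$(v)$ applied to $\co(H)$, but under the claim's hypothesis $\co(H)\in\{W\}\cup\dvW$ is exactly the excluded family, so there is no such fallback at all.

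You also have no finiteness mechanism: subdivision paths in $H$ can be arbitrarily long and pendant edges can be attached in many ways, so the search space is infinite and an orbit-based hand search has no termination argument. The paper's key reduction is an IDT bypass: if some vertex of $\co(H)$ not on a double edge is incident in $H$ to neither a pendant edge nor an edge with a degree-$2$ vertex, then $H$ has an $(e,f)$-IDT for all $e,f$ (via Lemma~5 of~\cite{LRVXY23-I}) and $L(H)$ is Hamilton-connected. This forces every such vertex to be ``touched,'' reducing the problem to the finite family of Claim~\ref{claim-thm1-wagnerace2} (one subdivision vertex per subdivided edge, one pendant edge per untouched vertex), which is then exhausted by computer. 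Without an analogous reduction your case split is not exhaustive, and in fact cannot be.

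Finally, that finite search produces a genuine exception $W_3$ --- $W$ with each of its four chord edges subdivided once --- which contains no $\Lp(\Gt)$ and has $12>9$ vertices, so is covered by neither your embedding search nor your $|V(H)|\le 9$ fallback. The paper disposes of it separately by checking directly that $L(W_3)$ is Hamilton-connected. Your ``secondary point to confirm'' is exactly where this exception lives; the hoped-for $\Gt$-closedness argument does not preclude it, and the proposal as written has no way to detect it or handle it.
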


\begin{proofcl}
Set $\tH=\co(H)$.
First observe that if $\tH\in\{W\cup\dvW\}$, then, by Lemma~\ref{lemma-minimality}$(ii)$,
the multiple edge of $\tH$ is of multiplicity 2, hence $\tH\in\{W_1,W_2\}$, where 
$W_1$ and $W_2$ are the multigraphs shown in Fig.~\ref{fig-Wagner-1-2}
(in which $v$ denotes the only vertex of degree 4).

%
%
\begin{figure}[ht]
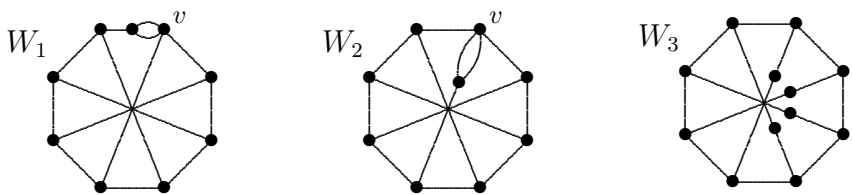

$$\beginpicture
\setcoordinatesystem units <0.6mm,1mm>
\setplotarea x from -70 to 70, y from -5 to 5
\put {\bp
\setcoordinatesystem units <0.35mm,0.35mm>
  \put{$\bullet$} at   -30   12
  \put{$\bullet$} at   -30  -12
  \put{$\bullet$} at    30   12
  \put{$\bullet$} at    30  -12
  \put{$\bullet$} at   -12   30
  \put{$\bullet$} at    12   30
  \put{\footnotesize $v$} at   18  35
  \put{$\bullet$} at   -12  -30
  \put{$\bullet$} at    12  -30
  \put{$\bullet$} at     0   30
  \plot -30 -12 -30 12 -12 30 0 30 /
  \plot 12 30 30 12 30 -12 12 -30
        -12 -30 -30 -12 /
  \plot -30  12  30 -12 /
  \plot -30 -12  30  12 /
  \plot -12  30  12 -30 /
  \plot -12 -30  12  30 /
  \setquadratic
  \plot 0 30  6 33  12 30 /
  \plot 0 30  6 27  12 30 /
  \setlinear
 \put{$W_1$}   at  -40    25
\ep} at -70 0
\put {\bp
\setcoordinatesystem units <0.35mm,0.35mm>
  \put{$\bullet$} at   -30   12
  \put{$\bullet$} at   -30  -12
  \put{$\bullet$} at    30   12
  \put{$\bullet$} at    30  -12
  \put{$\bullet$} at   -12   30
  \put{$\bullet$} at    12   30
  \put{\footnotesize $v$} at   18  35
  \put{$\bullet$} at   -12  -30
  \put{$\bullet$} at    12  -30
  \put{$\bullet$} at     4   10
  \plot -30 -12 -30 12 -12 30 12 30 30 12 30 -12 12 -30
        -12 -30 -30 -12 /
  \plot -30  12  30 -12 /
  \plot -30 -12  30  12 /
  \plot -12  30  12 -30 /
  \plot -12 -30   4  10 /
  \setquadratic
  \plot 4 10  5 21  12 30 /
  \plot 4 10 11 19  12 30 /
  \setlinear
 \put{$W_2$}   at  -40    25
\ep} at   0 0
\put {\bp
\setcoordinatesystem units <0.35mm,0.35mm>
  \put{$\bullet$} at   -30   12
  \put{$\bullet$} at   -30  -12
  \put{$\bullet$} at    30   12
  \put{$\bullet$} at    30  -12
  \put{$\bullet$} at   -12   30
  \put{$\bullet$} at    12   30
  \put{$\bullet$} at   -12  -30
  \put{$\bullet$} at    12  -30
  \put{$\bullet$} at    4   10
  \put{$\bullet$} at   10    4
  \put{$\bullet$} at   10   -4
  \put{$\bullet$} at    4  -10
  \plot -30 -12 -30 12 -12 30 12 30 /
  \plot 12 30 30 12 30 -12 12 -30
        -12 -30 -30 -12 /
  \plot -30  12  30 -12 /
  \plot -30 -12  30  12 /
  \plot -12  30  12 -30 /
  \plot -12 -30  12  30 /
 \put{$W_3$}   at  -40    25
\ep} at  70 0
\endpicture$$
\vspace*{-6mm}
\caption{The multigraphs $W_1$ and $W_2$, and the graph $W_3$.}
\label{fig-Wagner-1-2}
\vspace*{-2mm}
\end{figure}

Let thus $\tH\in\{W,W_1,W_2\}$, and let $V_1$ be the set of all vertices of $\tH$
that are not incident to a double edge. 

It is easy to see that if a vertex $x\in V_1$ is incident in $H$ to neither a 
pendant edge nor an edge containing a vertex of degree 2, then $x$ is not 
necessarily visited by an IDT in $H$ (all edges incident to $x$ can be dominated 
without visiting $x$), and then it is straightforward to verify that $H$ has an
$(e,f)$-IDT for any $e,f\in E(H)$, a contradiction. (Note that this fact can be 
also alternatively seen by applying Lemma~5 from \cite{LRVXY23-I} to $\tH$ and 
setting $A=V_1\sm\{x\}$ if $\tH\iso W$, or 
$A=V_1\cup\{v\}\sm\{x\}$ if $\tH\in\{W_1,W_2\}$).

Thus, in $H$, every vertex in $V_1$ is incident to a pendant edge or to an edge 
containing a vertex of degree 2. To prove the claim, it suffices to show that 
every such multigraph either contains an $\Lp(\Gt)$, or its line graph is 
Hamilton-connected. This will be done in the next claim, 
since it is straightforward to verify that $L(W_3)$ (where $W_3$ is 
the graph shown in Fig.~\ref{fig-Wagner-1-2}) is Hamilton-connected.
\end{proofcl}

%
%
%
\begin{claim}
\label{claim-thm1-wagnerace2}
Let $M\in\{W,W_1,W_2\}$, let $E\subset E(M)$, and let $V$ be the set of all 
vertices from $V(M)$ that are incident to neither an edge from $E$ nor to a
multiedge in $M$. Let $N$ be the multigraph obtained from $M$ by subdividing each 
edge of $E$ with one vertex of degree 2 and by adding one pendant edge to each 
vertex of $V$. 
If $N$ does not contain a subgraph isomorphic to $\Lp(\Gt)$, then $N\iso W_3$.
\end{claim}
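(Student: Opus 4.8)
The plan is to prove the claim by a finite, symmetry‑reduced case analysis, resting on one structural feature of $W$ (its diameter is $2$) and one simple embedding criterion. Note first that $N$ is completely determined by the pair $(M,E)$, since the pendant set $V$ is forced, and that in $N$ the only vertices of degree at least $3$ are the original vertices of $M$ (subdivision vertices have degree $2$, pendant vertices degree $1$). Consequently, writing $F_1,F_2,F_3$ for the three preimages of $\Gt$ from Fig.~\ref{fig-Preimages}: $F_1\subseteq N$ iff $N$ has a path $q_1q_2q_3q_4q_5$ together with neighbours $s_1,s_2$ of $q_1$ and $s_3,s_4$ of $q_5$ with the nine vertices pairwise distinct; $F_2\subseteq N$ iff moreover $q_1$ may be taken as one endpoint of a double edge of $N$ (playing the role of $q_1s_1,q_1s_2$); and $F_3$, needing two double edges, can never occur, since each of $W_1,W_2$ has exactly one double edge and $W$ none. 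Hence, writing $\Lp(\Gt)$ loosely for any of $F_1,F_2,F_3$, what must be shown is that every $N$ in the family except $W_3$ contains one of them as a subgraph. A convenient sufficient condition for $F_1\subseteq N$ is the existence of two non‑adjacent vertices $q_1,q_5$ of degree at least $3$ with $N_N(q_1)\cap N_N(q_5)=\emptyset$ joined by a suitably chosen path of length $4$.

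I would first treat $M=W$. If $V\neq\emptyset$, pick $x\in V$: in $N$ it has its three original neighbours plus a pendant, so taking $s_1$ the pendant, $s_2$ one original neighbour and $q_2$ another, and using that $W$ has diameter $2$ to route a path of length $3$ from $q_2$ to a second original vertex $q_5$ carrying two spare neighbours, one obtains $F_1$; a short explicit routing settles this, and only a few $\mathrm{Aut}(W)\cong D_8$ classes of the relevant configuration arise. So assume $V=\emptyset$, i.e.\ $E$ is an edge cover of $W$, whence $|E|\ge 4$. If $|E|\ge 5$, some original vertex lies on two edges of $E$ and thus has two subdivision‑vertex neighbours in $N$; using these as $\{s_1,s_2\}$ yields $F_1$ exactly as above. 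The remaining case $|E|=4$ forces $E$ to be a perfect matching of $W$, and up to $\mathrm{Aut}(W)$ there are exactly three perfect matchings of $W$ (the four ``diameters''; four alternating ``rim'' edges; a mixed one with two diameters and two rim edges), of which a direct inspection shows the last two give $N\supseteq F_1$.

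This leaves the perfect matching of the four diameters, where $N$ is precisely $W_3$ (Fig.~\ref{fig-Wagner-1-2}): the subgraph of $N$ on the original vertices is the $8$‑cycle, and each of the four subdivision vertices joins an antipodal pair of it. Here every original vertex has degree exactly $3$, so an $F_1$‑embedding would force $N_N(q_1)\cap N_N(q_5)=\emptyset$ with $q_1,q_5$ non‑adjacent; but in $W_3$ two non‑adjacent original vertices still have a common neighbour (the subdivision vertex on the diameter between them, since $W$ has diameter $2$), so this is impossible, and $W_3$ has no multiedge, so it contains no $F_2$ or $F_3$ either. Thus $W_3$ contains no $\Lp(\Gt)$, which is exactly the exceptional conclusion allowed by the claim. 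Finally, for $M\in\{W_1,W_2\}$ the double edge $vw$ always survives in $N$, so the conclusion $N\cong W_3$ can never be reached (consistently, $W_3$ is simple); here one must instead exhibit $F_2\subseteq N$ in every instance, by taking the two ends of the double edge as $\{q_1,s_{12}\}$, letting $q_2$ run over the (at most two) remaining $N$‑neighbours of $q_1$, and routing a path of length $4$ into the copy of $W$ minus one edge sitting inside $W_i$ to a vertex $q_5$ with two spare neighbours disjoint from the path and from $\{s_{12}\}$; a short check over the few $\mathrm{Aut}(W_i)$ classes of $E$ produces such an $F_2$ each time.

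The step I expect to be the main obstacle is the perfect‑matching subcase of $M=W$: one must be certain that, among all perfect matchings of $W$, the ``$8$‑cycle plus antipodal chords'' configuration $W_3$ is the \emph{unique} one whose subdivision avoids $F_1$, i.e.\ that $W_3$ really is the only exception. The remainder — confirming that the symmetry reductions of $E$ for $W$ and for the small‑automorphism graphs $W_1,W_2$ are exhaustive, and producing the explicit $F_1$‑ or $F_2$‑embeddings in every non‑$W_3$ case — is bounded bookkeeping that could, if preferred, be delegated to a computer.
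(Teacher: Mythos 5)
Your overall strategy — a symmetry-reduced hand case analysis in place of the paper's brute-force computer enumeration over all pairs $(M,E)$ — is a genuinely different route, and the skeleton (only original vertices have degree $\geq 3$; $F_3$ is impossible since $W$ has no multiedge and $W_1,W_2$ each have exactly one; casework on $V\neq\emptyset$, $|E|\geq 5$, $|E|=4$) is a reasonable plan. But there is a concrete error in the crucial final step, the verification that $W_3$ contains no $F_1$.

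You assert that in $W_3$ every two non-adjacent original vertices have a common neighbour, citing the subdivision vertex on the diameter and the diameter-$2$ property of $W$. This is false. Label the rim of $W_3$ by $0,\dots,7$ with the four subdivision vertices $v_{04},v_{15},v_{26},v_{37}$ on the (now-subdivided) diameters. Then $N_{W_3}(0)=\{1,7,v_{04}\}$ and $N_{W_3}(3)=\{2,4,v_{37}\}$, which are disjoint: the common neighbours $4$ and $7$ that $0$ and $3$ had in $W$ are reached only through the diameters $04$ and $37$, and those are exactly the edges that were subdivided. So the "disjoint neighbourhoods $\Rightarrow$ contradiction" argument collapses for rim-distance-$3$ pairs. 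The correct conclusion (no $F_1$ in $W_3$) is still true, but for a different reason: for such a pair, any attempt at a $(q_1,\dots,q_5)$-path forces $q_3$ or $q_4$ onto a vertex already claimed as $s_i$ or onto $q_1,q_5$ themselves, which a short check over the few choices of $q_2\in N(q_1)$, $q_4\in N(q_5)$ rules out; one must also separately dispose of the rim-distance-$2$ and antipodal pairs (where a common neighbour does exist, but only one, and it blocks exactly one of $q_2,q_4$). That check is manageable but is precisely the kind of thing your write-up skips. Beyond this, the routing claims for $V\neq\emptyset$, for $|E|\geq5$, for the two-diameter matchings, and for $M\in\{W_1,W_2\}$ are all stated as sketches to be "delegated to a computer," so in its present form the argument is not a complete hand proof and does not improve on the paper's approach — it risks being wrong in the one place where you try to argue by hand.
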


\begin{proofcl}
We prove the lemma with the help of a computer. 
For each choice of $M$ and $E$, we test the obtained multigraph $N$.
If $N$ does not contain $\Lp(\Gt)$ as a subgraph, then the program outputs $N$.
The only such $N$ is the graph $W_3$.
The source code of the proof can be found at \cite{computing2}.
\end{proofcl}

Thus, by Theorem~\ref{thmA-strongly_trailable}$(ii)$ and 
Theorem~\ref{thmA-core}$(v)$, we have $|V(\co(H))|\geq 10$, i.e., $H$ has at 
least 10~vertices of degree at least~3.

\ms

Let now $C$ be a cycle in $H$ such that $C$ is a shortest cycle of length at 
least 7.

\begin{mylist}
  \item[\underline{\bf Case 1:}] {\sl $7\leq|V(C)|\leq 8$, or 
  $9\leq|V(C)|\leq 10$ and $C$ is not chordless.} \quad \\
  By the assumptions and by Lemmas~\ref{lemma-diamonds} and 
  \ref{lemma-minimality}, $H$ satisfies the assumptions of Lemma~\ref{lemmaX}.
  Thus, by Lemma~\ref{lemmaX}, $H\in\cF$. To reach a contradiction, it remains 
  to show that for each $H\in\cF$, $L(H)$ is Hamilton-connected. 
  This was done with the help of a computer. 
  For each $H\in\cF$, and for each pair of vertices of $L(H)$, we find a hamiltonian 
  path by using the function `hamiltonian$\_$path' in SageMath. As a certificate of 
  Hamilton-connectedness, we also provide a list of these hamiltonian paths along 
  with a simple program that verifies this certificate. The verification is faster 
  (less than a minute) than finding the hamiltonian paths (a few hours). 
  The source codes are available at \cite{computing1}.

  \item[\underline{\bf Case 2:}] {\sl $9\leq|V(C)|\leq 10$ and $C$ is chordless,
  or $|V(C)|\geq 11$.} \quad \\
  Note that, by the choice of $C$ and by Case~1, $H$ contains no cycle $C'$ 
  of length $7\leq|V(C')|<|V(C)|$. 
  Denote $|V(C)|=r$, $V(C)=\{x_0,x_1,\ldots,x_{r-1}\}$, and $R=V(H)\sm V(C)$.

  %
  %
  \begin{claim}
  \label{claim-thm1-3}
  The cycle $C$ is chordless and for any vertices $x_i,x_j\in V(C)$ with 
  $\dist_C(x_i,x_j)\geq 3$, $N_H(x_i)\cap N_H(x_j)=\emptyset$.
  \end{claim}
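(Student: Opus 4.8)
The plan is to argue by contradiction in two parts, exploiting the minimality of the cycle $C$ (it is a shortest cycle of length at least $7$, and by Case~1 there is no cycle of length $\ell$ with $7\le\ell<|V(C)|$, while by Lemma~\ref{lemma-diamonds} and Lemma~\ref{lemma-minimality} the multigraph $H$ has no diamond, no triangle with a degree-$2$ vertex, no multiedge of multiplicity $\geq 3$ on a cycle, etc.). First I would show $C$ is chordless: suppose $x_ix_j\in E(H)$ is a chord, so $\dist_C(x_i,x_j)=d$ with $2\le d\le r-2$. The chord splits $C$ into two cycles of lengths $d+1$ and $r-d+1$, whose lengths sum to $r+2$. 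If either of these has length in $\{7,\dots,r-1\}$ we contradict the choice of $C$ (or Case~1). So I would need both "sub-cycles" to have length at most $6$ or exactly $r$ (the latter impossible for a proper chord) — i.e. $d+1\le 6$ and $r-d+1\le 6$, giving $r\le d+5\le 10$ but also $r\ge 9$ leaves very few configurations; a short case check (using that a chord producing a $C_k$ with $k\le 6$ together with the rest of $C$ then typically yields a shorter cycle of length $\ge 7$ elsewhere, or a forbidden small configuration such as a diamond/$D^1$) rules these out. In the remaining borderline cases ($r\in\{9,10\}$, chord creating two short cycles), I would use the constraint $d+1\le 6$ \emph{and} $r-d+1\le 6$ simultaneously, which forces $r\le 10$ and leaves only $d\in\{r-5,\dots,5\}$; for each such $d$ one checks directly that combining the short cycle with edges of $C$ outside it produces a new cycle whose length falls in $\{7,\dots,r-1\}$, contradicting minimality.

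Next, for the second assertion, suppose $x_i,x_j\in V(C)$ with $\dist_C(x_i,x_j)\ge 3$ have a common neighbor $w\in N_H(x_i)\cap N_H(x_j)$. The two paths along $C$ between $x_i$ and $x_j$ have lengths $\dist_C(x_i,x_j)=s\ge 3$ and $r-s\ge 3$ (the latter because $C$ is a shortest cycle of length $\ge 7$, so $\dist_C(x_i,x_j)\le \lfloor r/2\rfloor$, hence $r-s\ge \lceil r/2\rceil\ge 4$ when $r\ge 7$; in fact $r-s\ge 3$ always). Replacing one of these $C$-paths by the length-$2$ path $x_iwx_j$ yields cycles of lengths $s+2$ and $r-s+2$ (if $w\notin V(C)$), summing to $r+4$. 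I want one of these lengths to lie in $\{7,\dots,r-1\}$ to contradict the choice of $C$. Since $s\ge 3$ gives $s+2\ge 5$ and $r-s+2\ge 5$, and one of $s,r-s$ is at least $\lceil r/2\rceil$, the larger of the two new cycles has length $\ge \lceil r/2\rceil+2$. For $r\ge 11$ this is $\ge 9$, and it is also $\le r-s+2\le r-1$ since $s\ge 3$ — contradiction; for $r\in\{9,10\}$ the same bound gives a new cycle of length in $\{7,\dots,r-1\}$ unless both $s+2$ and $r-s+2$ fall outside $\{7,\dots,r-1\}$, i.e. are $\le 6$, forcing $s\le 4$ and $r-s\le 4$, hence $r\le 8$, contradicting $r\ge 9$. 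If instead $w\in V(C)$, then $w=x_k$ for some $k$, and $x_ix_k, x_kx_j\in E(H)$ are chords (or edges) of $C$; but we have just shown $C$ is chordless, so $x_ix_k,x_kx_j\in E(C)$, forcing $\dist_C(x_i,x_j)\le 2$, contrary to assumption.

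The main obstacle I anticipate is the bookkeeping in the borderline cases $r\in\{9,10\}$, where the "rerouting" cycles can all be short ($\le 6$) and one must instead extract a forbidden small subgraph — a diamond, a $D^1$ (excluded by Lemma~\ref{lemma-diamonds}$(iii)$), a triangle with a low-degree vertex (excluded by Lemma~\ref{lemmaA-SMclos-degree2} via Theorem~\ref{thmA-ultim-clos}), or a $C_k$ subgraph with $7\le k\le 8$ (excluded by Case~1) — rather than a medium-length cycle. I expect the argument to go through by noting that a chord or common neighbour producing two length-$\le 6$ pieces inside a $C_9$ or $C_{10}$ leaves a path of length $\ge 4$ on $C$, and adjoining that path to the short rerouting gadget reconstitutes a cycle whose length one can pin down to $\{7,8\}$ or to $\{r\}$; the former is killed by Case~1, and the latter case is exactly where the chordlessness just proved, or the diamond/$D^1$ exclusion, delivers the contradiction. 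Throughout, I would keep using that $H$ is essentially $3$-edge-connected and the preimage of a $\Gt$-closure, so no small forbidden configuration of Lemma~\ref{lemma-diamonds} or Lemma~\ref{lemma-minimality} is available.
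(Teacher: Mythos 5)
Your core idea matches the paper exactly: a chord (resp.\ a common neighbour outside $V(C)$) would reroute part of $C$ into a cycle of length at most $r-1$, and one verifies such a cycle has length at least $7$, contradicting the minimality of $C$. The paper's argument for common neighbours is identical to yours: with $s=\dist_C(x_i,x_j)\ge 3$, the longer of the two rerouted cycles has length $r-s+2$ with $\lceil r/2\rceil + 2 \le r-s+2 \le r-1$, and $\lceil r/2\rceil + 2\ge 7$ already holds for every $r\ge 9$. For the $y\in V(C)$ possibility you correctly reduce to chordlessness.

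The confusion is in your handling of chordlessness for $r\in\{9,10\}$. You did not need to prove it: it is part of the Case~2 hypothesis (``$9\leq|V(C)|\leq 10$ \emph{and $C$ is chordless}, or $|V(C)|\geq 11$''), and the paper simply invokes that. For $r\ge 11$ the length bound $\lceil r/2\rceil+1\ge 7$ does the job, just as in your argument. The hand-waving ``case check'' you propose for chords when $r\in\{9,10\}$ (``combining the short cycle with edges of $C$ outside it produces a new cycle whose length falls in $\{7,\dots,r-1\}$'') does not in fact work — for $r=9$, $d=4$ the two sub-cycles have lengths $5$ and $6$, and the edges ``outside'' the short one are precisely the other sub-cycle, so no new intermediate-length cycle appears, nor is a diamond or $D^1$ forced. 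Fortunately this case is excluded by hypothesis, so the proof is salvageable; but as written your borderline-case discussion would be a genuine gap if the case hypothesis were not available. Likewise the ``main obstacle'' you anticipate for common neighbours at $r\in\{9,10\}$ does not exist: your own bound already gives a cycle of length in $\{7,\dots,r-1\}$ for all $r\ge 9$, so the whole second paragraph after the $r\ge 11$ observation is redundant.
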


\bsm

  \begin{proofcl} For $9\leq r\leq 10$, $C$ is chordless by the assumption of 
  the case.
  If $r\geq 11$ and $x_ix_j$ is a chord in $C$ (i.e., $\dist_C(x_i,x_j)\geq 2$
  and $x_ix_j\in E(H)$), then the edge $x_ix_j$ creates with one of the two 
  parts of $C$ joining $x_i$ and $x_j$ a cycle $C'$ of length 
  $7\leq |V(C')|\leq r-1$, a contradiction. Thus, $C$ is chordless.

  If $r\geq 9$, $\dist_C(x_i,x_j)\geq 3$ and $x_i,x_j$ have a common neighbor 
  $y\in R$, then similarly the path $x_iyx_j$ creates with one of the two parts 
  of $C$ joining $x_i$ and $x_j$ a cycle $C'$ of length 
  $7\leq |V(C')|\leq |V(C)|-1$, a contradiction.
  \end{proofcl}

  The next several claims will be proved in a slightly more general setting 
  for a cycle in $H$ satisfying the conditions given in Claim~\ref{claim-thm1-3}
  (and will be therefore true also for the cycle $C$). 
  Let thus $C'=y_0y_1\ldots y_{t-1}$, $t=|V(C')|\geq 9$, be a cycle in $H$
  such that $C'$ is chordless and $N_H(y_i)\cap N_H(y_j)=\emptyset$ for any 
  $y_i,y_j\in V(C')$ with $\dist_{C'}(y_i,y_j)\geq 3$. Let $R'=V(H)\sm V(C')$.
  If $N_{R'}(y_i)\neq\emptyset$, we will sometimes use $\bar{y_i}$ for (some)
  neighbor of $y_i$ in $R'$.

  %
  %
  \begin{claim}
  \label{claim-thm1-4}
  $R'\neq\emptyset$.
  \end{claim}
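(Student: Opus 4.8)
The plan is to argue by contradiction. Suppose $R'=\emptyset$; then $V(H)=V(C')$, and I will exhibit a subgraph $F$ of $H$ with $L(F)=\Gt$, which is impossible since $\bG=L(H)$ is $\Gt$-free.

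First I would pin down the structure of $H$. Since $C'$ is chordless and every vertex of $H$ lies on $C'$, no edge of $H$ joins two vertices at $C'$-distance at least $2$, and as the preimage $H$ has no loops, every edge of $H$ is a (possibly multiple) edge of $C'$. Each vertex of $C'$ is already incident with its two cycle edges, so $H$ has no pendant edge, and by Lemma~\ref{lemma-minimality}$(ii)$ every edge of $C'$ has multiplicity at most $2$ in $H$. Hence $H$ is the cycle $C'$ with some set $S$ of its edges doubled. Now I would use that $H$ is essentially $3$-edge-connected (which holds since $G$, and hence $\bG$, is $3$-connected): if $H$ had two simple edges that are not consecutive on $C'$, deleting them would split $C'$ into two nontrivial arcs, i.e., would be an essential edge-cut of size $2$, a contradiction. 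As $t\ge 9$, the simple edges of $H$ hence pairwise share an endpoint, so there are at most two of them and, if there are two, they are consecutive on $C'$; consequently at least $t-2\ge 7$ consecutive edges of $C'$ lie in $S$.

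It then remains to locate $\Lp(\Gt)$ in $H$. Picking seven consecutive vertices $y_a,y_{a+1},\dots,y_{a+6}$ of $C'$ all lying on this doubled arc, the sub-multigraph of $H$ on $\{y_a,\dots,y_{a+6}\}$ with edges $y_ay_{a+1},y_{a+1}y_{a+2},\dots,y_{a+5}y_{a+6}$, where $y_ay_{a+1}$ and $y_{a+5}y_{a+6}$ are taken with multiplicity $2$ and the remaining four edges are taken simple, is isomorphic to the preimage $F_3$ of $\Gt$ shown in Fig.~\ref{fig-Preimages} (the multiplicities are legitimate, since for these four middle edges $\mu_{F_3}=1\le 2=\mu_H$). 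Since $L(F_3)=\Gt$ and $F_3\subset H$, the line graph $\bG=L(H)$ is not $\Gt$-free, a contradiction; hence $R'\neq\emptyset$.

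The step I expect to be the main obstacle — although it is fairly mild — is the structural reduction in the second paragraph: one must verify that the available hypotheses force $H$ to be a cycle with all but at most two consecutive edges doubled, which uses chordlessness of $C'$, looplessness of the preimage, the bound $V(H)=V(C')$ (which leaves no room for pendant edges), the multiplicity bound of Lemma~\ref{lemma-minimality}$(ii)$, and essential $3$-edge-connectivity of $H$. Once that structure is in hand, reading off the copy of $F_3$ in the long doubled arc is routine.
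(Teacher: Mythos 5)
Your proof is correct and follows essentially the same approach as the paper: assuming $R'=\emptyset$, you use chordlessness of $C'$ together with essential $3$-edge-connectivity of $H$ to rule out two non-consecutive simple edges on $C'$, then exploit $t\geq 9$ to locate two multiple edges $y_iy_{i+1},y_{i+5}y_{i+6}$ bounding a path of four simple edges between them, giving $F_3\iso\Lp(\Gt)\subset H$, a contradiction. The extra structural preamble (pinning down that $H$ consists exactly of $C'$ with some edges doubled) and the appeal to Lemma~\ref{lemma-minimality}$(ii)$ are both harmless and slightly more explicit than the paper's terse version, but the underlying argument is identical.
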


\bsm

  \begin{proofcl} 
  Let, to the contrary, $V(C')=V(H)$. If there are two nonconsecutive edges 
  $e_1,e_2\in E(C')\cap E_S(H)$, then $\{e_1,e_2\}$ is an essential edge-cut 
  of $H$ (recall that $C'$ is chordless), contradicting the connectivity 
  assumption. Thus, since $t=|V(C')|\geq 9$, we can choose the notation such 
  that, say, $\{y_0y_1,y_5y_6\}\subset E_M(H)$. Then the edges 
  $(y_0y_1)^1,(y_0y_1)^2,y_1y_2$, $y_2y_3,y_3y_4,y_4y_5,(y_5y_6)^1,(y_5y_6)^2$ 
  determine an $\Lp(\Gt)$ in $H$, a contradiction.
  Thus, $R'=V(H)\sm V(C')\neq\emptyset$.
\bsm  
  \end{proofcl}

\bsm
  
  %
  %
  \begin{claim}
  \label{claim-thm1-5}
  $E(C')\cap E_M(H)\neq\emptyset$.
  \end{claim}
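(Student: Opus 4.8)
The plan is to argue by contradiction: assume $E(C')\cap E_M(H)=\emptyset$, i.e.\ every edge of $C'$ is a simple edge of $H$, and then exhibit an $\Lp(\Gt)$ as a subgraph of $H$, which is impossible since $\bG$ is $\Gt$-free (exactly as in the earlier claims). Throughout I use that $C'$ is chordless, that $H$ is essentially $3$-edge-connected, and that $R'\neq\emptyset$. Since $C'$ is chordless and all its edges are simple, a vertex $y_i\in V(C')$ satisfies $d_H(y_i)=2$ if and only if $N_{R'}(y_i)=\emptyset$; write $T=\{i:d_H(y_i)=2\}$ and $S=\{0,1,\dots,t-1\}\sm T$. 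The first step is to show that $T$ is an independent set in the cycle $C'$: if $y_i$ and $y_{i+1}$ both had degree $2$ in $H$, then $\{y_{i-1}y_i,\,y_{i+1}y_{i+2}\}$ would be an edge-cut of size~$2$ separating the nontrivial component $\{y_i,y_{i+1}\}$ (it carries the edge $y_iy_{i+1}$) from the rest of $H$, which is also nontrivial because $t\geq 9$; this essential $2$-edge-cut contradicts essential $3$-edge-connectedness. In particular $S\neq\emptyset$.

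Next I would show that there is an index $a$ with $a,a+4\in S$ (indices taken mod~$t$). Suppose not, so no two elements of $S$ differ by $4$ modulo $t$. If $T=\emptyset$ then $0,4\in S$ (here $t\geq 9>4$), a contradiction; so fix $j\in T$. Since $T$ is independent in $C'$, its neighbours on $C'$ give $j-1,j+1\in S$. Now $j+1\in S$ forces $j+5\in T$ (otherwise $j+1,j+5\in S$ differ by $4$), and then the $C'$-neighbours of $y_{j+5}$ give $j+4,j+6\in S$; symmetrically $j-1\in S$ forces $j+3\in T$, whence $j+2,j+4\in S$. But then $j+2,j+6\in S$ differ by $4$, a contradiction.

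Finally, with $a$ fixed so that $a,a+4\in S$, choose $\bar{y}_a\in N_{R'}(y_a)$ and $\bar{y}_{a+4}\in N_{R'}(y_{a+4})$. Then the eight edges
$$y_{a-1}y_a,\quad \bar{y}_ay_a,\quad y_ay_{a+1},\quad y_{a+1}y_{a+2},\quad y_{a+2}y_{a+3},\quad y_{a+3}y_{a+4},\quad y_{a+4}y_{a+5},\quad y_{a+4}\bar{y}_{a+4}$$
determine an $\Lp(\Gt)$ in $H$ (a copy of the preimage $F_1$ of $\Gt$): the seven vertices $y_{a-1},y_a,\dots,y_{a+5}$ of $C'$ are pairwise distinct because $t\geq 9$; the vertices $\bar{y}_a,\bar{y}_{a+4}\in R'$ are distinct from every vertex of $C'$; and $\bar{y}_a\neq\bar{y}_{a+4}$ because $\dist_{C'}(y_a,y_{a+4})=4\geq 3$, so that $N_H(y_a)\cap N_H(y_{a+4})=\emptyset$. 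This contradicts the $\Gt$-freeness of $\bG$, proving the claim.

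I expect the second step to be the crux: one has to extract, from essential $3$-edge-connectedness together with the long-chordless-cycle hypothesis, two vertices of $C'$ at cyclic distance exactly $4$ that each have a neighbour off $C'$; the short propagation argument above does this uniformly, avoiding any case analysis on $\gcd(4,t)$. The rest — verifying in the last step that the nine vertices used are genuinely distinct, and that the listed edges really form a subgraph whose line graph is $\Gt$ — is routine bookkeeping under the hypothesis $t\geq 9$.
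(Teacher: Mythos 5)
Your proof is correct and takes essentially the same approach as the paper: both arguments reduce (via essential $3$-edge-connectedness, which forbids two consecutive degree-$2$ vertices on the chordless cycle) to finding two vertices of $C'$ at cyclic distance~$4$ that each have a neighbor off $C'$, and then exhibit the resulting $F_1$-copy of $\Lp(\Gt)$. The paper unwinds this with a concrete chain of index-by-index eliminations starting at $y_0$, while you package the same content more cleanly via the sets $S,T$ and the arithmetic-progression argument on residues mod~$t$.
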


\bsm

  \begin{proofcl} 
  Let, to the contrary, $E(C')\subset E_S(H)$. Since $R'\neq\emptyset$, we can 
  choose the notation such that $N_{R'}(y_0)\neq\emptyset$. Then 
  $N_{R'}(y_4)=\emptyset$ for otherwise the edges 
  $y_{t-1}y_0,\bar{y_0}y_0,y_0y_1,y_1y_2,$ $y_2y_3$, $y_3y_4,y_4\bar{y_4},y_4y_5$
  determine an $\Lp(\Gt)$ in $H$. By the connectivity assumption, there cannot 
  be two consecutive vertices of degree 2 on $C'$, hence 
  $N_{R'}(y_3)\neq\emptyset$ and $N_{R'}(y_5)\neq\emptyset$.
  Now, if $N_{R'}(y_1)\neq\emptyset$, then the edges 
  $y_0y_1,\bar{y_1}y_1,y_1y_2,y_2y_3,y_3y_4,y_4y_5,y_5\bar{y_5},y_5y_6$
  determine an $\Lp(\Gt)$ in $H$, hence $N_{R'}(y_1)=\emptyset$, implying 
  (by the connectivity assumption) that $N_{R'}(y_2)\neq\emptyset$.
  Then $N_{R'}(y_6)=\emptyset$ for otherwise the edges 
  $y_1y_2,\bar{y_2}y_2,y_2y_3,y_3y_4,y_4y_5,y_5y_6,y_6\bar{y_6},y_6y_7$
  determine an $\Lp(\Gt)$ in $H$. By the connectivity assumption, 
  $N_{R'}(y_7)\neq\emptyset$, but then the edges 
  $y_2y_3,\bar{y_3}y_3,y_3y_4,y_4y_5,y_5y_6,y_6y_7,y_7\bar{y_7},y_7y_8$
  determine an $\Lp(\Gt)$ in $H$, a contradiction.
  \end{proofcl} 

\bsm
 
  Thus, $C'$ contains at least one multiple edge.

  %
  %
  \begin{claim}
  \label{claim-thm1-6}
  $C'$ contains an edge $y_iy_{i+1}$ such that $y_iy_{i+1}\in E_M(H)$ and 
  at least one of $y_i$, $y_{i+1}$ has a neighbor in $R'$.
  \end{claim}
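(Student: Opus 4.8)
The plan is a proof by contradiction. Suppose that \emph{no} multiple edge of $C'$ has an endpoint with a neighbor in $R'$. Call a vertex $y_i$ of $C'$ \emph{blocked} if $N_{R'}(y_i)=\emptyset$; then the assumption says that whenever $y_iy_{i+1}\in E_M(H)$ both $y_i$ and $y_{i+1}$ are blocked, equivalently, every edge of $C'$ incident with a non-blocked vertex of $C'$ is simple. First I would dispose of the degenerate case that \emph{every} vertex of $C'$ is blocked: then there is no edge of $H$ between $V(C')$ and $R'$, and since $R'\neq\emptyset$ by Claim~\ref{claim-thm1-4} this contradicts the connectivity of $H$. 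Hence some vertex of $C'$ is not blocked.

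Next I would isolate a convenient multiedge. By Claim~\ref{claim-thm1-5} there is an edge $y_jy_{j+1}\in E_M(H)\cap E(C')$, and by the contrary assumption $y_j$ and $y_{j+1}$ are blocked. Let $S=\{y_a,y_{a+1},\ldots,y_b\}$ be the maximal block of consecutive blocked vertices of $C'$ containing $y_j$ and $y_{j+1}$, so that $|S|\geq 2$. Since not all vertices of $C'$ are blocked, $S\neq V(C')$, so $y_{a-1}$ and $y_{b+1}$ are well-defined and, by maximality of the block, not blocked. Because $y_{a-1}$ is not blocked, the edge $y_{a-1}y_a$ is simple; likewise $y_by_{b+1}$ is simple, and these two edges are distinct (as $S$ is a proper sub-arc of $C'$ with at least two vertices). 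Since $C'$ is chordless and every vertex of $S$ is blocked, $N_H(y_i)=\{y_{i-1},y_{i+1}\}$ for every $y_i\in S$; hence $y_{a-1}y_a$ and $y_by_{b+1}$ are exactly the edges of $H$ having precisely one endpoint in $S$, and so $\{y_{a-1}y_a,\,y_by_{b+1}\}$ is an edge-cut of $H$ of size $2$.

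It remains to check that this $2$-edge-cut is essential, which contradicts the essential $3$-edge-connectedness of $H$ (which holds since $G$ is $3$-connected). The component of $H-\{y_{a-1}y_a,y_by_{b+1}\}$ containing $S$ is nontrivial, since it contains the edge $y_jy_{j+1}$; the other component is nontrivial because it contains $y_{a-1}$ together with a neighbor $\bar{y}_{a-1}\in R'$ of $y_{a-1}$ (which exists as $y_{a-1}$ is not blocked), hence carries an edge. This contradiction finishes the argument, so $C'$ does contain a multiedge with an endpoint having a neighbor in $R'$.

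The only point I expect to need a little care is the bookkeeping at the two ends of the block $S$: one must make sure the boundary edges $y_{a-1}y_a$ and $y_by_{b+1}$ are genuinely distinct and that the ``outside'' component really carries an edge, including the extreme configuration $y_{a-1}=y_{b+1}$ (when $V(C')\setminus S$ consists of a single vertex). All of this reduces to the single observation that every vertex of $C'$ lying outside $S$ is not blocked and therefore has a neighbor in $R'$; once the contrary hypothesis is restated as ``every edge meeting a non-blocked vertex is simple'', the essential $2$-edge-cut falls out immediately, and no $\Gt$-subgraph analysis is needed here.
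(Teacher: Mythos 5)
Your proof is correct, but it takes a genuinely different route from the paper's.  The paper argues by iteratively invoking essential $3$-edge-connectedness: starting from one multiedge $y_0y_1$ with both endpoints blocked, it deduces that $\{y_{t-1}y_0,y_1y_2\}$ cannot be an essential $2$-edge-cut, hence (up to relabelling) $y_1y_2$ is also multiple, and it repeats this to grow a chain out to $y_5y_6\in E_M(H)$, at which point the eight edges $(y_0y_1)^1,(y_0y_1)^2,y_1y_2,\ldots,y_4y_5,(y_5y_6)^1,(y_5y_6)^2$ form an $\Lp(\Gt)$ in $H$, a contradiction to $\Gt$-freeness.  You instead bypass the $\Gt$-subgraph analysis completely: you take the maximal run $S$ of consecutive blocked vertices containing the multiedge, observe (using the contrapositive of the contrary hypothesis) that the two simple boundary edges $y_{a-1}y_a,\,y_by_{b+1}$ are the only edges leaving $S$, and exhibit an essential $2$-edge-cut directly.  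Your argument is more economical in that it uses only Claims~\ref{claim-thm1-4} and \ref{claim-thm1-5}, chordlessness of $C'$, and essential $3$-edge-connectedness of $H$, without needing the $\Gt$-free hypothesis at this step; the paper's version is structured so that each claim in this sequence ends on an $\Lp(\Gt)$ contradiction, which keeps the proofs uniform in style.  The edge cases you flag (the two boundary edges being distinct and the outer component carrying an edge, including $y_{a-1}=y_{b+1}$) are handled correctly, since $|S|\geq 2$ forces $y_a\neq y_b$, and $y_{a-1}$ is not blocked so it has an $R'$-neighbour and hence an incident edge surviving the cut.
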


\bsm

  \begin{proofcl} 
  Suppose, to the contrary, that $N_{R'}(y_i)=N_{R'}(y_{i+1})=\emptyset$
  for any multiple edge $y_iy_{i+1}$ of $C'$, and choose the notation such that 
  $y_0y_1\in E_M(H)$. 
  Since $\{y_{t-1}y_0, y_1y_2\}$ cannot be a cutset, by symmetry, we can assume 
  that $y_1y_2\in E_M(H)$. Now $\{y_{t-1}y_0, y_2y_3\}$ cannot be a cutset, 
  implying, by symmetry, $y_2y_3\in E_M(H)$. Repeating the argument, we have 
  $y_5y_6\in E_M(H)$, but then the edges 
  $(y_0y_1)^1,(y_0y_1)^2,y_1y_2,y_2y_3,y_3y_4,y_4y_5,(y_5y_6)^1,(y_5y_6)^2$
  determine an $\Lp(\Gt)$ in $H$, a contradiction.
  \end{proofcl}

\bsm
  
  %
  %
  \begin{claim}
  \label{claim-thm1-7}
  Let $C'=y_0y_1\ldots y_{t-1}$ be a cycle in $H$ such that $t=|V(C')|\geq 9$, 
  $C'$ is chordless, and $N_H(y_i)\cap N_H(y_j)=\emptyset$ for any two vertices 
  $y_i,y_j\in V(C')$ with $\dist_{C'}(y_i,y_j)\geq 3$. 
  Then $t\equiv 0 \pmod{3}$, and the notation can be chosen such that 
  \begin{mathitem}
  \item if $i\equiv 0 \pmod{3}$, then $N_{R'}(y_i)\neq\emptyset$ and 
        $y_iy_{i+1}\in E_M(H)$, and 
  
  \item if $i\equiv 1 \pmod{3}$ or $i\equiv 2 \pmod{3}$, then 
        $N_{R'}(y_i)=\emptyset$ and $y_iy_{i+1}\in E_S(H)$.
  \end{mathitem}
  \end{claim}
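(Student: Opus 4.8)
\begin{proofcl}
The strategy is to pin down the whole cyclic pattern of multiple edges and of vertices having a neighbour in $R'$ along $C'$ by propagating the local structure forced near a single multiple edge all the way around $C'$; once the pattern is periodic with period $3$, consistency at the point where $C'$ closes up forces $t\equiv 0\pmod 3$, and a cyclic shift of the labels yields the asserted form.

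The tool is a short list of forbidden eight-edge configurations, each of which spans a subgraph $F$ of $H$ with $L(F)\iso\Gt$ (so $F$ is one of $F_1,F_2,F_3$ of Fig.~\ref{fig-Preimages}), contradicting the fact that $H=\Lp(\bG)$ with $\bG$ being $\Gt$-free. Exactly as in the proofs of Claims~\ref{claim-thm1-4}--\ref{claim-thm1-6}, these are: $(a)$ two multiple edges $y_iy_{i+1},y_{i+5}y_{i+6}$ of $C'$ (giving an $F_3$ from the eight edges $(y_iy_{i+1})^1,(y_iy_{i+1})^2,y_{i+1}y_{i+2},\ldots,y_{i+4}y_{i+5},(y_{i+5}y_{i+6})^1,(y_{i+5}y_{i+6})^2$); $(b)$ a multiple edge $y_iy_{i+1}$ of $C'$ with $N_{R'}(y_{i+5})\neq\emptyset$ (giving an $F_2$), and, mirrored, a multiple edge $y_iy_{i+1}$ with $N_{R'}(y_{i-4})\neq\emptyset$; and $(c)$ two vertices $y_i,y_{i+4}$ with $N_{R'}(y_i)\neq\emptyset\neq N_{R'}(y_{i+4})$, where $\dist_{C'}(y_i,y_{i+4})=4\geq 3$, so the hypothesis on $C'$ makes the two chosen $R'$-neighbours distinct and the eight edges really form an $F_1$. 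I would also use, as before, that essential $3$-edge-connectivity of $H$ forbids two consecutive vertices of $C'$ from having degree $2$ in $H$, and that a vertex $y_i$ of $C'$ has degree $2$ in $H$ precisely when $N_{R'}(y_i)=\emptyset$ and both $y_{i-1}y_i,y_iy_{i+1}$ are simple.

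Now I would argue as follows. By Claim~\ref{claim-thm1-6} some multiple edge of $C'$ has an endpoint with a neighbour in $R'$; relabelling, and reversing the orientation of $C'$ if necessary (which keeps that edge an edge of $C'$), we may assume $y_0y_1\in E_M(H)$ and $N_{R'}(y_0)\neq\emptyset$ (indices of the $y$'s read modulo $t$ throughout). Applying the configurations $(a)$, $(b)$, $(c)$ and the mirror of $(b)$ around $i=0$ yields $N_{R'}(y_4)=N_{R'}(y_5)=N_{R'}(y_{-4})=\emptyset$ and $y_4y_5,y_5y_6,y_{-5}y_{-4}\in E_S(H)$; hence $y_5$ has degree $2$ in $H$, and therefore $y_4$ --- being neither of degree $2$ nor having an $R'$-neighbour, with $y_4y_5$ simple --- must satisfy $y_3y_4\in E_M(H)$. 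One now repeats this at $y_3y_4$: at each step one decides whether the non-degree-$2$ vertex lying between two successive degree-$2$ vertices owes its degree to a multiple cycle edge or to a neighbour in $R'$, and uses $(a)$--$(c)$ to discard the wrong alternative and to locate the next multiple edge, so that the entire configuration recurs three positions further along $C'$, and symmetrically three positions back. Going once around $C'$ forces the set of indices $i$ with $y_iy_{i+1}\in E_M(H)$ --- which then coincides with the set of $i$ with $N_{R'}(y_i)\neq\emptyset$ --- to be a single residue class modulo $3$ in the index group $\mathbb{Z}_t$, every remaining vertex of $C'$ having degree $2$; this is possible only when $3\mid t$, and a final cyclic shift of the labels gives exactly (i)--(ii).

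The heart of the proof, and its main obstacle, is the propagation step. The catalogue $(a)$--$(c)$ does not by itself exclude every off-pattern local situation --- two multiple edges at cyclic distance other than $5$, a vertex incident with two multiple cycle edges, two vertices with $R'$-neighbours at cyclic distance $1$, $2$ or $3$, and so on --- so disposing of each such case requires combining several forbidden configurations with the connectivity constraints simultaneously, and in places also exploiting that $C'$ is chordless with no short common-neighbour chords. This uniform case analysis around the cycle is where practically all the work lies; the resulting divisibility $3\mid t$ and the choice of labeling are then immediate.
\end{proofcl}
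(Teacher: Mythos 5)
The overall strategy is the same as the paper's: start from Claim~\ref{claim-thm1-6} with $y_0y_1\in E_M(H)$ and $N_{R'}(y_0)\neq\emptyset$, rule out forbidden eight-edge configurations whose line graph is $\Gamma_3$ (your items $(a)$--$(c)$, which are exactly what the paper uses), combine with connectivity, and propagate a period-$3$ pattern around $C'$ so that closing the cycle forces $3\mid t$. Your first wave of deductions ($N_{R'}(y_4)=N_{R'}(y_5)=\emptyset$, $y_4y_5,y_5y_6\in E_S(H)$, hence $y_3y_4\in E_M(H)$) is correct.

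The gap is the propagation step itself, which you leave as prose. To reproduce the starting hypothesis at $y_3$ you still need $N_{R'}(y_3)\neq\emptyset$, and this does not follow from the catalogue $(a)$--$(c)$ together with ``no two consecutive degree-$2$ vertices'', because $y_3$ and $y_4$ are endpoints of the double edge and hence are not degree-$2$ vertices. The paper first obtains $N_{R'}(y_2)=\emptyset$ and $y_1y_2\in E_S(H)$ by further forbidden-subgraph arguments that explicitly use the degree alternative at $y_6$ (``$N_{R'}(y_6)\neq\emptyset$ or $y_6y_7\in E_M(H)$''), and only then invokes essential $3$-edge-connectivity in a stronger form: if $N_{R'}(y_3)=\emptyset$, then $\{y_1y_2,y_4y_5\}$ would be an essential $2$-edge-cut isolating the nontrivial piece $\{y_2,y_3,y_4\}$, a contradiction. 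That $2$-edge-cut argument is genuinely more than the consecutive-degree-$2$ prohibition, and nothing in your sketch supplies it. Relatedly, your description of the final pattern --- ``every remaining vertex of $C'$ having degree $2$'' and ``the non-degree-$2$ vertex lying between two successive degree-$2$ vertices'' --- is inaccurate: in the established pattern the vertices $y_i$ with $i\equiv 1\pmod 3$ have degree $3$ (from the double edge $y_{i-1}y_i$), so only $i\equiv 2\pmod 3$ gives degree $2$, and between two consecutive degree-$2$ vertices there are two, not one, vertices of higher degree. This mismatch suggests the local picture driving your ``repeat at $y_3y_4$'' step needs to be corrected before the recursion can be made precise.
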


\bsm

  Recall that, specifically, the cycle $C$ satisfies the assumptions of 
  Claim~\ref{claim-thm1-7} by Claim~\ref{claim-thm1-3}.

  \begin{proofcl} 
  By Claim~\ref{claim-thm1-6}, choose the notation such that $y_0y_1\in E_M(H)$
  and $N_{R'}(y_0)\neq\emptyset$. Then immediately 
  $N_{R'}(y_4)=N_{R'}(y_5)=\emptyset$ and $\{y_4y_5,y_5y_6\}\subset E_S(H)$
  (since otherwise, in all cases, we have an $\Lp(\Gt)$ in $H$).
  By the connectivity assumption, necessarily $y_3y_4\in E_M(H)$ and 
  $N_{R'}(y_6)\neq\emptyset$ or $y_6y_7\in E_M(H)$. Then necessarily 
  $N_{R'}(y_2)=\emptyset$ and $y_1y_2\in E_S(H)$ (otherwise we have an 
  $\Lp(\Gt)$ in $H$). Since $\{y_1y_2,y_4y_5\}$ is not a cutset, 
  $N_{R'}(y_3)\neq\emptyset$.
  
  Summarizing, we conclude that $N_{R'}(y_3)\neq\emptyset$, $y_3y_4\in E_M(H)$, 
  $N_{R'}(y_4)=N_{R'}(y_5)=\emptyset$, and $\{y_4y_5,y_5y_6\}\subset E_S(H)$, 
  i.e., we have the requested statement for $i=3,4,5$. Repeating the argument, 
  starting with $N_{R'}(x_3)\neq\emptyset$ and $y_3y_4\in E_M(H)$, we get
  the statement for $i=6,7,8$. The claim then follows by induction.
  \end{proofcl} 

\bsm

  Now we can apply Claim~\ref{claim-thm1-7} to the cycle $C$ (recall that, 
  by Claim~\ref{claim-thm1-3}, $C$ satisfies the assumptions of 
  Claim~\ref{claim-thm1-7}). 
  Thus, we have $|V(C)|= r \equiv 0 \pmod{3}$, which specifically implies that
  $r=9$ or $r\geq 12$. 
  
  For $\ell=0,1,2$, we denote 
  $X_\ell=\{x_i\in V(C)|\ i \equiv \ell \pmod{3}\}$ and 
  $E_\ell=\{x_ix_{i+1}\in E(C)|\ i \equiv \ell \pmod{3}\}$.
  Thus, specifically, by Claim~\ref{claim-thm1-7}, $E_0\subset E_M(H)$ and 
  $E_1\cup E_2\subset E_S(H)$. 
  By the connectivity assumption, the edges in $E_1\cup E_2$ cannot form a
  cutset of $H$, hence there is an $(x_\alpha,x_\beta)$-path with endvertices 
  $x_\alpha,x_\beta\in X_0$ and with interior vertices in $R$.
  
  For the purpose of this proof, by an {\em arc} we will mean a path that is 
  chordless and any two vertices at distance at least 3 have no common neighbor. 
  Clearly, if for some vertices $x_\alpha,x_\beta\in X_0$ there is an 
  $(x_\alpha,x_\beta)$-path with interior vertices in $R$, there is also 
  an $(x_\alpha,x_\beta)$-arc with interior vertices in $R$.

  Let $T$ be an $(x_\alpha,x_\beta)$-arc with interior vertices in $R$ for 
  some $x_\alpha,x_\beta\in X_0$. Since for any two vertices 
  $x_\alpha,x_\beta\in X_0$, $\dist_C(x_\alpha,x_\beta)\geq 3$ and $C$ is 
  shortest, $T$ is of length at least 3.
  The vertices $x_\alpha,x_\beta$ divide $C$ into two paths (called 
  {\em segments}), and each of them, together with $T$, creates in $H$ a 
  cycle of length at least 6. Let $C^T$ be the shorter one of these cycles 
  (or any of them if both segments of $C$ are of the same length).
  
  \ms
  
  Suppose first that the $(x_\alpha,x_\beta)$-arc $T$ can be chosen such that 
  $|V(C^T)|\geq 7$, and, subject to this condition, choose $T$ such that $C^T$
  is shortest possible. By the choice of $C$ and by the previous arguments, 
  $9\leq|V(C)|\leq|V(C^T)|$. Choose the notation such that, in the natural 
  orientation of $C$ given by increasing indices, the segment of $C$ that is 
  in $C^T$ is oriented from $x_\alpha$ to $x_\beta$, and set 
  $T=z_0z_1\ldots z_t$, where $z_0=x_\alpha$ and $z_t=x_\beta$. 
  
  We verify that the cycle $C^T$ satisfies the assumptions of 
  Claim~\ref{claim-thm1-7}. This is clear for $y_i,y_j\in V(C)$ 
  (by Claim~\ref{claim-thm1-3}), and for $y_i,y_j\in V(T)$ (by the choice of $T$);
  so, let $y_i\in V(C^T)\sm V(T)$ and $y_j\in V(C^T)\sm V(C)$. 
  Since $y_i\in X_0$ by Claim~\ref{claim-thm1-7}, $\dist_C(y_i,x_\alpha)\geq 3$ 
  and $\dist_C(y_i,x_\beta)\geq 3$, hence $\dist_C(x_\alpha,x_\beta)\geq 6$
  (by the choice of $C^T$ as the shorter one determined by 
  $x_\alpha,x_\beta$ and $T$), implying $|V(C^T)|\geq 12$. Thus, any chord 
  $y_iy_j\in E(H)$ or a vertex in $N_R(y_i)\cap N_R(y_j)$ yields a cycle 
  $C'$ of length $7\leq|V(C')|<|V(C^T)|$, contradicting the choice of $C^T$.
  Thus, $C^T$ satisfies the assumptions of Claim~\ref{claim-thm1-7}.
  This implies that $t \equiv 0 \pmod{3}$ and $z_2z_3\in E_M(H)$. 
  Since also $x_{\alpha-3}x_{\alpha-2}\in E_M(H)$, the edges 
  $(x_{\alpha-3}x_{\alpha-2})^1,(x_{\alpha-3}x_{\alpha-2})^2,
  x_{\alpha-2}x_{\alpha-1},x_{\alpha-1}x_\alpha,x_\alpha z_1$, $z_1z_2,
  (z_2z_3)^1,(z_2z_3)^2$ determine an $\Lp(\Gt)$ in $H$, a contradiction.

  \ms

  Thus, for every $(x_\alpha,x_\beta)$-arc $T$ in $H$ with interior vertices 
  in $R$, we have $|V(C^T)|=6$, i.e., $t=3$ and $\dist_C(x_\alpha,x_\beta)=3$.
  Then, replacing in $C$ the $(x_\alpha,x_\beta)$-segment (of length 3) 
  by the $(x_\alpha,x_\beta)$-arc $T$, we get a cycle $C'$ of the same 
  length as $C$, and, by Claim~\ref{claim-thm1-7}, we have $z_0z_1\in E_M(H)$, 
  $\{z_1z_2,z_2z_3\}\subset E_S(H)$, and $N_{R'}(z_1)=N_{R'}(z_2)=\emptyset$
  (where $R'=V(H)\sm V(C')$).
  Note that, by the connectivity assumption, there is at most one 
  $(x_\alpha,x_\beta)$-segment of $C$ of length 3 without any 
  $(x_\alpha,x_\beta)$-arc with interior vertices in $R$. Then it is 
  straightforward to verify that $H$ has an $(e,f)$-IDT for any $e,f\in E(H)$,
  hence $G^{\Gt}=L(H)$ is Hamilton-connected by Theorem~\ref{thmA-DCT+IDT}
  (for an example, see Fig.~\ref{fig-IDT}$(a)$). 
  Alternatively viewed, $L(H)$ is the graph that can be obtained from a cycle
  by replacing each vertex with a clique and each edge, except at most one, 
  with at least 2 vertices of degree 3 attached to both cliques 
  (see Fig.~\ref{fig-IDT}$(b)$), and it is
  straightforward to verify that $G^{\Gt}$ is Hamilton-connected.

%
%
\begin{figure}[ht]
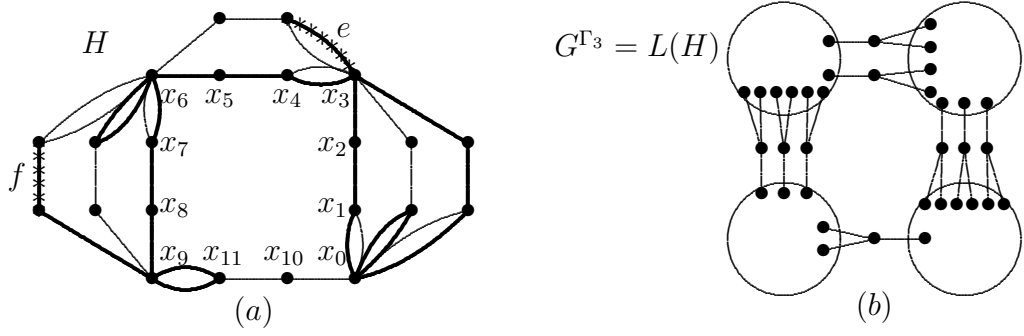

$$\bp
\setcoordinatesystem units <0.9mm,1mm>
\setplotarea x from -50 to 50, y from -7 to 7
\put{\beginpicture
\setcoordinatesystem units <1.5mm,1.5mm>
\setplotarea x from -10 to 10, y from -10 to 10

\put{$\bullet$} at   -9   9
\put{$\bullet$} at   -9   3
\put{$\bullet$} at   -9  -3
\put{$\bullet$} at   -9  -9
\put{$\bullet$} at   -3   9
\put{$\bullet$} at   -3  -9
\put{$\bullet$} at    3   9
\put{$\bullet$} at    3  -9
\put{$\bullet$} at    9   9
\put{$\bullet$} at    9   3
\put{$\bullet$} at    9  -3
\put{$\bullet$} at    9  -9
\put{$\bullet$} at  -14   3
\put{$\bullet$} at  -14  -3
\put{$\bullet$} at  -19   3
\put{$\bullet$} at  -19  -3
\put{$\bullet$} at   14   3
\put{$\bullet$} at   14  -3
\put{$\bullet$} at   19   3
\put{$\bullet$} at   19  -3
\put{$\bullet$} at   -3   14
\put{$\bullet$} at    3   14
\put{$x_0$} at    7   -7
\put{$x_1$} at    7   -2.7
\put{$x_2$} at    7    2.7
\put{$x_3$} at    7.3  7
\put{$x_4$} at    3    7
\put{$x_5$} at   -3    7
\put{$x_6$} at   -7    7
\put{$x_7$} at   -7    2.7
\put{$x_8$} at   -7   -2.7
\put{$x_9$} at   -7   -7
\put{$x_{10}$} at   2.7   -7
\put{$x_{11}$} at  -2.7   -7
\plot -3 -9  9 -9 /
\plot  -9 9  -3 14  3 14  /
\plot  9 9  14 3  14 -3 /  
\plot -9 -9  -14 -3  -14 3 /  
\setquadratic
\plot  3 9    6 9.8   9 9 /
\plot  9 9   5 11   3 14  /
\plot -9 9   -9.7 6  -9 3  /
\plot -9 9  -14 7   -19 3  -14 5    -9 9 /
\plot  9 -9    9.7 -6   9 -3     /
\plot   19 -3   14 -5     9 -9 /
\setplotsymbol ({\large .})
\plot  9 -9   14 -7    19 -3 /
\plot  9 -9   12 -6    14 -3   11.2 -5   9 -9 /
\plot  9 -9    8.3 -6   9 -3    /
\plot  9 9   6 8.2  3 9 /
\plot  3 14  6.5 12   9 9 /   
\plot -9 -9  -6 -8 -3 -9  -6 -10  -9 -9 /
\plot -9 9  -12 6   -14 3  -11.2 5  -9 9 /
\plot -9 3   -8.3 6  -9 9 /
\setlinear
\plot  9 9  19 3  19 -3 /  
\plot 9 -3  9 9 /
\plot 3 9  -9 9 /
\plot -9 -9  -19 -3  -19 3 /  
\plot -9 -9  -9 3 /
\setplotsymbol ({\fiverm .})
%
\setplotsymbol ({$*$})
\plotsymbolspacing=5pt
\plot -19 -3  -19 3 /
\setquadratic
\plot  3 14  6.5 12   9 9 /
\setlinear
\plotsymbolspacing=0.4pt
\setplotsymbol ({\fiverm .})
\put{$f$} at  -21   0 
\put{$e$} at    8    13 
\put{$H$} at   -14   12 
\put{$(a)$} at  0 -12
\endpicture} at  -40   0
\put{\beginpicture
\setcoordinatesystem units <1.5mm,1.5mm>
\setplotarea x from -10 to 10, y from -10 to 10
\put{$\bullet$} at  5  11
\put{$\bullet$} at  5   9
\put{$\bullet$} at  5   7
\put{$\bullet$} at  5   5
\put{$\bullet$} at  0   9.5
\put{$\bullet$} at  0   6.5
\put{$\bullet$} at -4   9.5
\put{$\bullet$} at -4   6.5
\plot 5 11 0 9.5 5 9 /
\plot 5  7 0 6.5 5 5 /
\plot -4 9.5   0 9.5 /
\plot -4 6.5   0 6.5 /
\put{$\bullet$} at -11.5  5
\put{$\bullet$} at -10.1  5
\put{$\bullet$} at  -8.7  5
\put{$\bullet$} at  -7.3  5
\put{$\bullet$} at  -5.9  5
\put{$\bullet$} at  -4.5  5
\put{$\bullet$} at  -8    0
\put{$\bullet$} at -10    0
\put{$\bullet$} at  -6    0
\put{$\bullet$} at  -8   -4
\put{$\bullet$} at -10   -4
\put{$\bullet$} at  -6   -4
\plot -11.5 5  -10 0  -10.1 5 /
\plot  -8.7 5   -8 0   -7.3 5 /
\plot  -5.9 5   -6 0   -4.5 5 /
\plot  -10  0  -10 -4 /
\plot   -8  0   -8 -4 /
\plot   -6  0   -6 -4 /
\put{$\bullet$} at  11.5 -5
\put{$\bullet$} at  10.1 -5
\put{$\bullet$} at   8.7 -5
\put{$\bullet$} at   7.3 -5
\put{$\bullet$} at   5.9 -5
\put{$\bullet$} at   4.5 -5
\put{$\bullet$} at   8    0
\put{$\bullet$} at  10    0
\put{$\bullet$} at   6    0
\put{$\bullet$} at   8    4
\put{$\bullet$} at  10    4
\put{$\bullet$} at   6    4
\plot  11.5 -5   10 0  10.1 -5 /
\plot   8.7 -5    8 0   7.3 -5 /
\plot   5.9 -5    6 0   4.5 -5 /
\plot   10   0   10 4 /
\plot    8   0    8 4 /
\plot    6   0    6 4 /
\put{$\bullet$} at   0   -8
\put{$\bullet$} at   4.5 -8
\put{$\bullet$} at  -4.5 -7
\put{$\bullet$} at  -4.5 -9
\plot -4.5 -9  0 -8  -4.5 -7 /
\plot 0 -8  4.5 -8 /
\circulararc  360  degrees from   3  8 center at  8   8
\circulararc  360  degrees from   3 -8 center at  8  -8
\circulararc  360  degrees from  -3  8 center at -8   8
\circulararc  360  degrees from  -3 -8 center at -8  -8
\put{$G^{\Gt}=L(H)$} at   -21   9
\put{$(b)$} at   0 -14
\endpicture} at  40  0
\ep$$
\bsm
\caption{An $(e,f)$-IDT in $H$ and the graph $G^{\Gt}=L(H)$.}
\label{fig-IDT}
\end{figure}
\end{mylist}
\bsm\bsm
\end{proofbt}

\section{Concluding remarks}
\label{sec-concluding}

{\bf 1.} Theorem~\ref{thm-main} admits a slight extension as follows. 
For $s\geq 0$, a graph $G$ is {\em $s$-Hamilton-connected} if the graph $G-M$ 
is Hamilton-connected for any set $M\subset V(G)$ with $|M|\leq s$. 
Obviously, an $s$-Hamilton-connected graph must be $(s+3)$-connected. 
Since an induced subgraph of a $\{\claw,\Gt\}$-free graph is also 
$\{\claw,\Gt\}$-free, we immediately have the following fact, showing 
that, in $\{\claw,\Gt\}$-free graphs, the obvious necessary condition is 
also sufficient.

%
%
\begin{corollary}
\label{coro-s-HC}
Let $s\geq 0$ be an integer, and let $G$ be a $\{\claw,\Gt\}$-free graph. 
Then $G$ is $s$-Hamilton-connected if and only if $G$ is $(s+3)$-connected.
\end{corollary}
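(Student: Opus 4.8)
The plan is to obtain both implications directly, with Theorem~\ref{thm-main} carrying the only substantive content.

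For necessity, suppose $G$ is $s$-Hamilton-connected. I would invoke the elementary fact that every Hamilton-connected graph on at least four vertices is $3$-connected: if $\{a,b\}$ were a cutset separating $G-M$ into components, no $(u,v)$-path with endpoints in distinct components other than those incident to $a,b$ could be spanning, and the handful of small cases ($|V|\le 3$) are checked by inspection. Applying this to $G-M$ for each $M$ with $|M|=s$ shows that $G$ admits no vertex cut of size at most $s+2$, and that $|V(G)|\ge s+4$; hence $G$ is $(s+3)$-connected. This is exactly the ``obvious'' observation stated immediately before the corollary, so in the write-up I would simply cite it rather than reprove it.

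For sufficiency, suppose $G$ is $(s+3)$-connected and $\{\claw,\Gt\}$-free, and fix an arbitrary $M\subseteq V(G)$ with $|M|\le s$. First, $G-M$ is an induced subgraph of $G$, hence it is again $\{\claw,\Gt\}$-free. Second, by the standard fact that deleting $j$ vertices from a $k$-connected graph leaves a $(k-j)$-connected graph, $G-M$ is $(s+3-|M|)$-connected, so in particular it is $3$-connected (and has at least four vertices). Therefore Theorem~\ref{thm-main} applies and yields that $G-M$ is Hamilton-connected. As $M$ was an arbitrary subset of size at most $s$, $G$ is $s$-Hamilton-connected by definition.

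The only points that need any care are the two folklore connectivity facts just used --- Hamilton-connectedness implying $3$-connectedness, and $k$-connectivity dropping by at most one per deleted vertex --- together with the trivial boundary cases when $|V(G)|$ is small. Neither is a real obstacle; the whole force of the corollary comes from combining the hereditary nature of the $\{\claw,\Gt\}$-free condition with Theorem~\ref{thm-main}, so the proof is short and the main ``step'' is simply recognizing that deleting vertices preserves both hypotheses.
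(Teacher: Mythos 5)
Your proof is correct and follows exactly the route the paper intends: necessity is the folklore fact that Hamilton-connectedness forces $3$-connectedness (applied after deleting any $s$ vertices), and sufficiency combines the hereditary nature of $\{\claw,\Gt\}$-freeness with the standard drop of connectivity under vertex deletion and then invokes Theorem~\ref{thm-main}. This matches the one-line justification the paper gives immediately before the corollary.
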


\ms

{\bf 2.} A $\Gt$-closure of a graph $G$, as defined in 
Section~\ref{sec-Gt-closure}, is not unique in general. However, in view 
of Theorem~\ref{thm-main}, it is unique on 3-connected $\{\claw,\Gt\}$-free graphs
since each such graph is Hamilton-connected, hence has complete closure.

\bs

{\bf 3.} We can now update the discussion of potential pairs $X,Y$ of connected
graphs that might imply Hamilton-connectedness of a $3$-connected $\{X,Y\}$-free 
graph, as summarized in~\cite{RV23}.

As shown in \cite{BFHTV02}, up to a symmetry, necessarily $X=\claw$, and, 
summarizing the discussions from \cite{BGHJFW14}, \cite{BFHTV02}, 
\cite{FFRV12} and \cite{LRVXY23-II}, there are the following possibilities 
for $Y$ (see Fig.~\ref{fig-special_graphs}):

\ssk

\begin{mathitem}{\sl
\item $Y=P_i$ with $4\leq i\leq 9$,
\item $Y=Z_i$ with $i\leq 6$, or $Y=Z_7$ for $n=|V(G)|\geq 21$,
\item $Y=B_{i,j}$ with $i+j\leq 7$,
\item $Y=N_{i,j,k}$ with $i+j+k\leq 7$,
\item $Y\in\{\Gamma_1,\Gamma_3\}$, or $Y=\Gamma_5$ for $n=|V(G)|\geq 21$.}
\end{mathitem}

\ms

Best known results in the direction of each of these subgraphs are summarized 
in Theorem~\ref{thmA-known_results}, and we summarize the current status of 
the problem in the following table.

\ms

\begin{tabular}{|c|c|c|c|c|}
\hline
$Y$ & Possible & Best known &  Reference & Open  \\
\hline
$P_i$      & $4\leq i\leq 9$ & $P_9$   & \cite{BGHJFW14} & --- \\
$Z_i$      & $i\leq 6$; $Z_7$ for $n\geq 21$ & $Z_6$; $Z_7$ for $G\niso L(W^+)$  
       & \cite{RV21}      & --- \\
$B_{i,j}$  & $i+j\leq 7$     & $i+j\leq 7$ & \cite{RV23} & ---  \\
$N_{i,j,k}$ &$i+j+k\leq 7$    & $i+j+k\leq 7 $   
            & \cite{LRVXY23-I,LRVXY23-II,LXL21} & --- \\
$\Gamma_i$ & $\Gamma_1$, $\Gamma_3$, $\Gamma_5$ for $n\geq21$ & $\Gamma_1,\Gt$ 
       & \cite{BFHTV02}, this paper & $\Gamma_5$ for $n\geq21$ \\
\hline
\end{tabular}

\bs

\noi
Thus, the only remaining case is the case $Y=\Gamma_5$ for $n\geq 21$ 
(or possibly $\Gamma_5$ for $G\niso L(W^+)$).
We believe that this case is also true, and we think that it could be 
doable with the techniques of this paper; however, the proof would be too 
technical to be reasonably handled  even with the help of a computer. 

\bs

{\bf 4.} 
The source codes of our proof-assisting programs are available at \cite{computing1}
and \cite{computing2}.
The codes are written in Python 3.8 and use functions imported from 
SageMath~9.6. We thank the SageMath community \cite{Sage} for developing a 
valuable open-source mathematical software.

\section{Acknowledgement}
The research was supported by project GA20-09525S of the Czech Science 
Foundation.

\end{document}